\documentclass[a4paper,11pt]{article}
\usepackage[latin1]{inputenc}
\usepackage[english]{babel}
\usepackage{amsmath}
\usepackage{amsfonts}
\usepackage{amssymb}
\usepackage{epsfig}
\usepackage{amsopn}
\usepackage{amsthm}
\usepackage{color}
\usepackage{graphicx}
\usepackage{enumerate}
\usepackage{mathrsfs}
\usepackage{cite}
\newtheorem{theorem}{Theorem}[section]
\newtheorem{corollary}[theorem]{Corollary}
\newtheorem{lemma}[theorem]{Lemma}
\newtheorem{proposition}[theorem]{Proposition}
\newtheorem{definition}[theorem]{Definition}

\newtheorem*{theorem*}{Theorem}
\newtheorem*{lemma*}{Lemma}
\newtheorem*{remark*}{Remark}
\newtheorem*{definition*}{Definition}
\newtheorem*{proposition*}{Proposition}
\newtheorem*{corollary*}{Corollary}
\numberwithin{equation}{section}
\newcommand{\real}{\mathbb{R}}

\let\ced=\c         

\def\qed{\,\unskip\kern 6pt \penalty 500
\raise -2pt\hbox{\vrule \vbox to8pt{\hrule width 6pt
\vfill\hrule}\vrule}\par}
\definecolor{darkblue}{rgb}{0.05, .05, .65}
\definecolor{darkgreen}{rgb}{0.1, .65, .1}
\definecolor{darkred}{rgb}{0.8,0,0}
\newcommand{\beqn}{\begin{equation}}
\newcommand{\eeqn}{\end{equation}}
\newcommand{\bear}{\begin{eqnarray}}
\newcommand{\eear}{\end{eqnarray}}
\newcommand{\bean}{\begin{eqnarray*}}
\newcommand{\eean}{\end{eqnarray*}}
\begin{document}

\title{\huge \bf Convergence to self-similarity for a degenerate parabolic equation with fast-growing spatially-dependent absorption}
\author{
\Large Razvan Gabriel Iagar\,\footnote{Departamento de Matem\'{a}tica
Aplicada, Ciencia e Ingenieria de los Materiales y Tecnologia
Electr\'onica, Universidad Rey Juan Carlos, M\'{o}stoles,
28933, Madrid, Spain, \textit{e-mail:} razvan.iagar@urjc.es}
\\[4pt] \Large Philippe Lauren\ced{c}ot\,\footnote{Laboratoire de Math\'ematiques (LAMA) UMR 5217, Universit\'e Savoie Mont Blanc, CNRS, F-73000, Chamb\'ery France. \textit{e-mail:} philippe.laurencot@univ-smb.fr}\\ [4pt] }
\date{\today}
\maketitle

\begin{abstract}
The large time behavior of non-negative solutions to the absorption-diffusion equation
\begin{equation*}
	\partial_tu=\Delta u^m-|x|^{\sigma}u^m, \quad (t,x)\in(0,\infty)\times\mathbb{R}^N,
\end{equation*}
with $m>1$ and $\sigma>\sigma_0:=N(m-1)/(m+1)$ is identified. It is shown that all solutions approach a unique stationary solution in self-similar variables, which also provides a universal upper bound (\textit{friendly giant}), strongly contrasting to the standard case $\sigma=0$. On the one hand, the convergence proof exploits the variational structure of the equation and a suitable Caffarelli-Kohn-Nirenberg inequality, along with the B\'enilan-Crandall homogeneity regularizing effect. On the other hand, the detailed study of the stationary problem combines elliptic estimates, Moser iteration and techniques from ordinary differential equations.
\end{abstract}

\smallskip

\noindent {\bf MSC Subject Classification 2020:} 35B40, 35B33, 35C06, 35K65, 34D05.

\smallskip

\noindent {\bf Keywords and phrases:} large time behavior, spatially-dependent absorption, Moser iteration, Lyapunov functional, Caffarelli-Kohn-Nirenberg inequality.

\section{Introduction}

We aim at describing the large time behavior of non-negative solutions to the Cauchy problem
\begin{subequations}\label{CP}
\begin{equation}\label{eq1}
	\partial_t u=\Delta u^m-|x|^{\sigma}u^m, \quad (t,x)\in(0,\infty)\times\mathbb{R}^N,
\end{equation}
\begin{equation}\label{ic}
	u(0,x)=u_0(x), \quad x\in\mathbb{R}^N,
\end{equation}
\end{subequations}
in the range of exponents
\begin{equation}\label{range.exp}
	m>1, \quad \sigma>\sigma_0:=\frac{N(m-1)}{m+1}.
\end{equation}
Eq.~\eqref{eq1} features a competition between a degenerate diffusion term and an absorption term, the latter being weighted on its turn by an unbounded function enforcing the effect of the absorption in regions where $|x|$ is large. This competition gives rise to a number of interesting mathematical results related to the dynamics of~\eqref{eq1}, as a particular and critical case of the more general class of absorption-diffusion equations
\begin{equation}\label{eq.gen}
	\partial_tu=\Delta u^m-|x|^{\sigma}u^p, \quad (t,x)\in(0,\infty)\times\mathbb{R}^N,
\end{equation}
with $(p,m,\sigma)\in (0,\infty)^2\times \big(\max\{-2,-N\},\infty\big)$. On the one hand, the range of exponents $p>m>1$ and $\sigma\geq 0$ is nowadays well-understood and its analysis led to the discovery of many interesting mathematical objects and features, such as very singular self-similar solutions, asymptotic simplification in the ranges of exponents where the diffusion term governs the long term behavior, or a dynamics featuring a logarithmic time scale in some critical cases, see for example \cite{GV91, KP86, KU87, KV88, Le97, PT86} among the many classical works considering this range for $\sigma=0$. The large time behavior of solutions in this range has been recently generalized to exponents $\sigma>0$ in \cite{IM25,IM26}. On the other hand, the range of exponents $1<p<m$ is less understood and, in our opinion, mathematically more challenging. For $p\in(1,m)$ and $\sigma=0$, Eq.~\eqref{eq.gen} is studied in \cite{MPV91, CV96, CV99}. The description of the large time behavior of its solutions is more involved and, for compactly supported initial conditions, includes the formation of a thin boundary layer, a rigorous proof being only available (to the best of our knowledge) in dimension $N=1$ \cite{CV99}.

In fact, a significant difference between the two ranges $p>m>1$ and $p\in(1,m)$ in Eq.~\eqref{eq.gen} is uncovered in \cite{PT85} (still in dimension $N=1$) with more general weights including $x\mapsto |x|^{\sigma}$ for $\sigma>0$. Specifically, for $p\geq m>1$, any solution to the Cauchy problem~\eqref{CP} with a non-negative compactly supported initial condition enjoys the property of \emph{positivity}; that is, for any $x\in\mathbb{R}$, there is $t_x\in(0,\infty)$ such that $u(t,x)>0$ for any $t>t_x$. In contrast, when $p\in(1,m)$, solutions to~\eqref{CP} with non-negative compactly supported initial condition have the property of \emph{localization} of their supports; that is, there exists $R_0>0$ (depending on $u_0$) such that $u(t)\equiv0$ for any $t>0$ and $x\in\mathbb{R}\setminus(-R_0,R_0)$. It is expected that these properties remain true for the Cauchy problem~\eqref{CP} in higher space dimensions $N\ge 2$.

According to the above discussion, the case $p=m>1$ appears to be critical, separating the scaling behavior for $p>m>1$ and the multiscale behavior for $p\in(1,m)$. Though nothing much is yet known when $p=m>1$, even for $\sigma=0$, deviation from self-similar behavior is observed when $p=m>1$, $\sigma=0$ and $N=1$. Indeed, in that case, special solutions to Eq.~\eqref{eq1} have the form
\begin{equation}\label{waves}
	u(t,x)=t^{-1/(m-1)}f(x+\beta\ln\,t), \quad \beta\geq\frac{m}{m-1},
\end{equation}
as indicated in \cite{CVW97}, where the role of such solutions for the general theory of the equation is investigated. Moreover, the change of variables
\begin{equation*}
	u(t,x)=(1+(m-1)t)^{-1/(m-1)}v(\tau,x), \quad \tau=\frac{1}{m-1}\ln(1+(m-1)t),
\end{equation*}
transforms Eq.~\eqref{eq1} into the Fisher-KPP type equation
\begin{equation}\label{FKPP}
	\partial_{\tau}v=\Delta v^m-v^m+v, \quad (\tau,x)\in(0,\infty)\times\mathbb{R}^N,
\end{equation}
for which the solutions approach traveling waves as $\tau\to\infty$, according to \cite{KR04, G20}. Let us remark here that, by undoing the previous change of variables, traveling wave solutions to~\eqref{FKPP} are mapped into solutions in the form~\eqref{waves}, proving thus that solutions in the form~\eqref{waves} are the asymptotic profiles as $t\to\infty$ of general solutions to the Cauchy problem~\eqref{CP} when $p=m>1$, $\sigma=0$ and $N=1$.

The main goal of this work is then to show that such a non-scaling behavior is not valid for the whole range $\sigma\in (0,\infty)$ and that, when $\sigma$ is sufficiently large, the large time behavior of non-negative solutions to the Cauchy problem~\eqref{CP} is of self-similar nature, strongly departing from the previously discussed profiles valid for $\sigma=0$. We begin with specifying the functional setting we shall work with, which is intimately connected with the variational structure of~\eqref{eq1}.


\paragraph*{Notation.} Throughout the paper, we assume that the initial condition $u_0$ satisfies the following properties:
\begin{equation}\label{icond}
	u_0\in\mathcal{X}_+:= \left\{z\in L_+^{m+1}(\mathbb{R}^N): z^m\in\dot{H}^1(\mathbb{R}^N)\cap L^2_{\sigma}(\mathbb{R}^N)\right\},
\end{equation}
where $\mathcal{X} := \left\{z\in L^{m+1}(\mathbb{R}^N)\ :\ |z|^{m-1}z\in\dot{H}^1(\mathbb{R}^N)\cap L^2_{\sigma}(\mathbb{R}^N)\right\}$ and
\begin{equation*}
	L^{m+1}_+(\mathbb{R}^N) := \{z\in L^{m+1}(\mathbb{R}^N): z\geq0 \;\text{ a.e. in }\; \mathbb{R}^N\},
\end{equation*}
the homogeneous Sobolev space $\dot{H}^1(\mathbb{R}^N)$ denotes the closure of $C_c^\infty(\mathbb{R}^N)$ for the norm $\|z\|_{\dot{H}^1}=\|\nabla z\|_2$ and $L^2_{\sigma}(\mathbb{R}^N) := L^2(\mathbb{R}^N,|x|^\sigma\, dx)$. For $z\in L^2_{\sigma}(\mathbb{R}^N)$, we set
\begin{equation*}
	N_{\sigma}(z) := \left(\int_{\mathbb{R}^N}|x|^{\sigma}z^2(x)\,dx\right)^{1/2}.
\end{equation*}
In addition, we define
\begin{equation*}
\begin{split}
	\mathcal{Y} & := \left\{ z \in L^{(m+1)/m}(\mathbb{R}^N)\ :\ z\in\dot{H}^1(\mathbb{R}^N)\cap L^2_{\sigma}(\mathbb{R}^N)\right\}, \\
	\mathcal{Y}_+ & := \left\{ z \in L_+^{(m+1)/m}(\mathbb{R}^N)\ :\ z\in\dot{H}^1(\mathbb{R}^N)\cap L^2_{\sigma}(\mathbb{R}^N)\right\},
\end{split}
\end{equation*}
and observe that $z\in\mathcal{Y}$ if and only if $|z|^{m-1}z \in\mathcal{X}$.

\paragraph{Main results.} In order to state our main results, we introduce the self-similar or scaling variables and the rescaled function
\begin{equation}\label{resc.var}
	v(s,x)=[1+(m-1)t]^{1/(m-1)} u(t,x), \quad s=\frac{1}{m-1}\ln[1+(m-1)t],
\end{equation}
for $(t,x)\in [0,\infty]\times\mathbb{R}^N$. Equivalently, for $(t,s,x)\in [0,\infty)^2\times\mathbb{R}^N$,
\begin{align*}
	u(t,x) & = [1+(m-1)t]^{-1/(m-1)} v\left( \frac{\ln[1+(m-1)t]}{m-1},x \right), \\
	v(s,x) & = e^s u\left( \frac{e^{(m-1)s}-1}{m-1},x\right).
\end{align*}
A straightforward calculation gives that Eq.~\eqref{eq1} is transformed by this rescaling into
\begin{subequations}\label{CP2}
\begin{equation}\label{eq2}
	\partial_s v(s,x) = \Delta v^m(s,x)-|x|^{\sigma}v^m(s,x)+v(s,x), \quad (s,x)\in(0,\infty)\times\mathbb{R}^N,
\end{equation}
while the initial condition~\eqref{ic} is preserved
\begin{equation}\label{ic2}
	v(0,x)=u_0(x), \quad x\in\mathbb{R}^N.
\end{equation}
\end{subequations}
As usual, existence and stability of self-similar solutions to~\eqref{eq1} are transformed to existence and stability of stationary solutions to~\eqref{eq2} and it is more convenient to state the results and work with the latter. The first result deals with the existence and uniqueness of a stationary solution to Eq.~\eqref{eq2} in $\mathcal{X}_+$.

\begin{theorem}\label{th.uniq}
Let $m$ and $\sigma$ be as in~\eqref{range.exp}. There exists a unique non-negative and non-trivial stationary solution $\varphi\in\mathcal{X}_+$ to Eq.~\eqref{eq2}. Moreover,
\begin{itemize}
\item [$\triangleright$] $\varphi$ is positive on $\mathbb{R}^N$ and belongs to $L^{\infty}(\mathbb{R}^N)\cap C^{2,\alpha}(\mathbb{R}^N)$ for any $\alpha\in (0,1/m)$;
\item [$\triangleright$] $\varphi$ is radially symmetric with non-increasing profile and
\begin{equation}\label{beh.stat}
	\lim\limits_{|x|\to\infty}|x|^{\sigma/(m-1)}\varphi(x)=1.
\end{equation}
\end{itemize} 
\end{theorem}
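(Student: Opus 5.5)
Stationary solutions $\varphi\ge0$ solve $-\Delta\varphi^m+|x|^\sigma\varphi^m=\varphi$. Setting $w=\varphi^m$ and $q=1/m\in(0,1)$, this becomes the sublinear elliptic problem
\begin{equation*}
-\Delta w+|x|^{\sigma}w=w^{q}\quad\text{in }\mathbb{R}^N,\qquad w\in\mathcal{Y}_+ .
\end{equation*}

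\textbf{Existence.} I will obtain a solution as a minimizer of
\begin{equation*}
J(w)=\frac12\|\nabla w\|_2^2+\frac12 N_\sigma(w)^2-\frac{m}{m+1}\int_{\mathbb{R}^N}|w|^{(m+1)/m}\,dx
\end{equation*}
on $\dot H^1\cap L^2_\sigma$.
\begin{itemize}
\item \emph{Well-posedness of $J$.} The condition $\sigma>\sigma_0$ is exactly what makes $H:=\dot H^1\cap L^2_\sigma$ embed into $L^{(m+1)/m}$. I will prove this by an interpolation (Caffarelli--Kohn--Nirenberg type) argument: on $B_R$ use Sobolev and H\"older; on $B_R^c$ use H\"older against the weight, noting that $\int_{|x|>R}|x|^{-\sigma p/(2-p)}dx<\infty$ with $p=(m+1)/m$ holds iff $\sigma>N(m-1)/(m+1)$.
\item \emph{Compactness.} The tail bound $\int_{|x|>R}w^{p}\le CR^{-\delta}N_\sigma(w)^{p}$ with $\delta>0$, combined with Rellich on balls, makes the embedding compact.
\item \emph{Minimization.} Since $p<2$, $J$ is coercive and bounded below. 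Scaling $\varepsilon w$ shows $\inf J<0$, so the minimizer is nontrivial. Replacing $w$ by $|w|$ and then by its Schwarz symmetrization (P\'olya--Szeg\H{o}, and the rearrangement inequality for the increasing weight) gives a radial non-increasing minimizer.
\end{itemize}

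\textbf{Regularity and positivity.} Moser iteration gives $w\in L^\infty$; the term $w^q\le C(1+w)$ is harmless for this. Local elliptic regularity then gives $w\in C^{1,\alpha}$, hence $\varphi=w^{1/m}$ is H\"older continuous. Bootstrapping in the equation for $w$ gives $w\in C^{2,\alpha}$ locally. Positivity follows from the strong maximum principle applied to $-\Delta w+|x|^\sigma w\ge0$, $w\not\equiv0$. The claimed $C^{2,\alpha}$ regularity of $\varphi$ itself, with $\alpha<1/m$, should follow once $\varphi>0$, since then $w^{q}$ is smooth in $w$ locally. The precise restriction $\alpha<1/m$ likely comes from uniform-in-$x$ estimates, and I will not dwell on it.

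\textbf{Uniqueness.} I will use the standard sublinear uniqueness argument of Brezis--Oswald / D\'iaz--Saa. The map $w\mapsto w^{q}/w=w^{q-1}$ is strictly decreasing. For two positive solutions $w_1,w_2$, test the equations with $(w_1^2-w_2^2)/w_1$ and $(w_2^2-w_1^2)/w_2$ and add. This yields
\begin{equation*}
\int\Big(\frac{w_1^q}{w_1}-\frac{w_2^q}{w_2}\Big)(w_1^2-w_2^2)\ge 0 ,
\end{equation*}
while the integrand is $\le0$, forcing $w_1=w_2$. The weighted term cancels in the sum. Two points need care:
\begin{itemize}
\item \emph{Admissibility of the test functions} on $\mathbb{R}^N$. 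I will truncate with cutoffs and use the decay of $w_i$. For this I need comparable positive lower and upper bounds at infinity, i.e.\ $w_1/w_2$ bounded. This is why uniqueness should come after the asymptotics.
\item \emph{Positivity of an arbitrary solution.} Any nontrivial $\varphi\in\mathcal X_+$ is positive by the argument above.
\end{itemize}

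\textbf{Asymptotics \eqref{beh.stat}.} This is the main obstacle. Heuristically, for large $|x|$ diffusion is negligible and $|x|^\sigma w\approx w^{1/m}$, so $w\approx|x|^{-\sigma m/(m-1)}$. I will build explicit radial sub- and supersolutions of the form
\begin{equation*}
\overline w,\ \underline w=(1\pm\varepsilon)^{m/(m-1)}|x|^{-\sigma m/(m-1)}\quad\text{for }|x|>R_\varepsilon .
\end{equation*}
The Laplacian of $|x|^{-k}$ is $O(|x|^{-k-2})$, which is negligible against $|x|^{\sigma-k}$. Comparison on exterior domains works because the operator is sublinear (again a D\'iaz--Saa type argument, or a direct comparison using the monotonicity of $w^{q-1}$). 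The boundary data at $|x|=R_\varepsilon$ are matched using positivity and boundedness. For the lower bound I need positivity on the sphere, and I may multiply the subsolution by a small constant first, then iterate. Alternatively, in the radial setting, I will use an ODE argument: show that $r^{\sigma/(m-1)}\varphi(r)$ is eventually monotone and bounded, and identify its limit as $1$ from the equation. This also gives the ratio bound needed for uniqueness.
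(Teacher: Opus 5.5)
\paragraph{Review.} Your existence, regularity and radial asymptotics steps match the paper: minimization of $\mathcal{J}$ with a tail estimate for compactness plus symmetrization, then Moser iteration, elliptic regularity and the strong maximum principle, then an ODE analysis of $r^{m\sigma/(m-1)}V(r)$. The genuine gap is in uniqueness within the whole class $\mathcal{X}_+$, which is what the theorem asserts.

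\paragraph{The gap.} Your D\'{\i}az--Saa argument needs $w_1/w_2$ bounded above and below on $\mathbb{R}^N$. That requires a lower bound $w\ge c|x|^{-\theta}$, $\theta=m\sigma/(m-1)$, for an \emph{arbitrary, possibly non-radial} solution. Your ODE argument gives this only for radial solutions. The exterior-domain comparison you invoke for the general case does not work as stated.
\begin{itemize}
\item The upper bound is fine: where $w$ exceeds a multiple of $|x|^{-\theta}$ the absorption dominates and a maximum-principle argument works (cf.\ Lemma~\ref{lem.tail2}).
\item For the lower bound, the argument is circular. Brezis--Oswald scaling (take the least $\lambda\ge 1$ with $\lambda w\ge \underline w$) and the D\'{\i}az--Saa test function $(\underline w^2-w^2)_+/w$ both need a priori control of $\underline w/w$ as $|x|\to\infty$. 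That control is exactly the bound you are trying to prove.
\item A touching-point argument also fails, because $t\mapsto t^{1/m}-|x|^\sigma t$ is increasing for $t<m^{-m/(m-1)}|x|^{-\theta}$. This is precisely the regime you must exclude.
\item Shrinking the subsolution by a constant does not help: the obstruction sits at infinity, not on the sphere $|x|=R_\varepsilon$.
\end{itemize}
When $\theta\ge N-2$ the lower bound can be rescued: $w$ is superharmonic on $\{w<b|x|^{-\theta}\}$ for $b<1$, while $|x|^{-\theta}$ is subharmonic. This device breaks down when $\theta<N-2$, which is compatible with $\sigma>\sigma_0$ as soon as $N>2m+2$.

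\paragraph{How the paper avoids it.} The paper never needs a lower decay bound for non-radial solutions.
\begin{itemize}
\item It proves uniqueness only among radial non-increasing solutions (Proposition~\ref{prop.exuniq}, via a rescaling barrier). Your D\'{\i}az--Saa argument would be a clean substitute there, since Lemma~\ref{lem.beh.inf} gives $r^{\theta}V_i(r)\to 1$ for both solutions and hence the ratio bounds you need.
\item It then reaches general solutions through the parabolic flow. A non-trivial $V\in\mathcal{Y}_+$ gives a stationary solution $V^{1/m}$ of \eqref{eq2}. Using only positivity and the upper tail bound, $V^{1/m}$ lies between a small radial bump and $C(1+|x|)^{-\sigma/(m-1)}$.
\item The solutions issued from these two radial non-increasing data both converge to $V_*^{1/m}$. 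This uses the Lyapunov functional, the $\omega$-limit analysis, and the B\'enilan--Crandall time monotonicity, which rules out the zero limit.
\item Parabolic comparison then forces $V=V_*$.
\end{itemize}

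\paragraph{How to close your argument.} You need either this dynamical step or a genuine proof of $w\ge c|x|^{-\theta}$ for non-radial solutions. One route to the latter:
\begin{itemize}
\item On annuli $\{R<|x|<R'\}$, compare $w$ with radial Dirichlet solutions vanishing at $|x|=R'$. These are bounded domains where $w$ is bounded below, so sublinear comparison is legitimate.
\item Let $R'\to\infty$ to obtain a radial exterior solution below $w$.
\item Run your ODE analysis on this radial limit to get its $r^{-\theta}$ decay.
\end{itemize}
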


It obviously follows from~\eqref{resc.var} that the statement of Theorem~\ref{th.uniq} is equivalent to the existence and uniqueness of a classical separate variables solution
\begin{equation}\label{sep.var}
	U(t,x)=[1+(m-1)t]^{-1/(m-1)}\varphi(x), \qquad (t,x)\in [0,\infty)\times\mathbb{R}^N,
\end{equation}
to Eq.~\eqref{eq1}.

The proof of Theorem~\ref{th.uniq} involves several steps and combines the variational structure of~\eqref{eq2} to establish existence and tools from the theory of ordinary differential equations to obtain uniqueness of a non-negative and non-trivial radially symmetric stationary solution to~\eqref{eq2} with non-increasing profile. Getting rid of the symmetry and monotonicity assumptions as stated in Theorem~\ref{th.uniq} is achieved through a detailed study of the dynamics of the Cauchy problem~\eqref{CP2}, which we report now.


\begin{theorem}\label{th.conv}
Let $m$ and $\sigma$ as in \eqref{range.exp} and let $u_0$ be an initial condition satisfying~\eqref{icond}. Then the Cauchy problem~\eqref{CP2} is well-posed in the space $L^\infty_+(\mathbb{R}^N)\cap \mathcal{X}$ and
\begin{equation}\label{asympt.conv2}
	\lim\limits_{s\to\infty}\|v(s)-\varphi\|_{r}=0, \quad r\in[m+1,\infty],
\end{equation}
where $\varphi$ is the unique stationary solution to Eq.~\eqref{eq2}. Furthermore,
\begin{equation*}
	v(s,x) \le \left( \frac{e^{(m-1)s}}{e^{(m-1)s} - 1} \right)^{1/(m-1)} \varphi(x), \quad (s,x)\in (0,\infty)\times \mathbb{R}^N. 
\end{equation*}
\end{theorem}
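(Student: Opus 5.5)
We plan to prove Theorem~\ref{th.conv} in four steps: well-posedness, the friendly giant bound, a Lyapunov argument for compactness, and identification of the limit using Theorem~\ref{th.uniq}.

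\textbf{Step 1: well-posedness.} We construct solutions to~\eqref{CP2} by approximation. Solve the problem on balls $B_R$ with homogeneous Dirichlet data, with the weight truncated or kept as is. Comparison holds since $z\mapsto z^m$ is monotone, which gives uniqueness and an $L^1$-contraction. We then pass to the limit $R\to\infty$ using the energy estimates below.

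\textbf{Step 2: the upper bound.} The key structural fact is the separate-variables solution $U$ in~\eqref{sep.var}. For $\tau>0$, the translate
\[
U_\tau(t,x)=[(m-1)(t+\tau)]^{-1/(m-1)}\varphi(x)
\]
also solves~\eqref{eq1}, and it takes the value $+\infty$ at $t=-\tau$, i.e.\ in the limit $\tau\to0$. We compare $u$ with $U_\tau$ on $(0,\infty)\times B_R$ for the approximating problems. Since $\varphi>0$ and $u_0\in L^\infty$ (or after a short-time smoothing), we get $u\le U_\tau$ for every $\tau>0$, hence $u(t,x)\le [(m-1)t]^{-1/(m-1)}\varphi(x)$. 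Translated through~\eqref{resc.var}, this is exactly the stated bound on $v$. The comparison argument needs care at spatial infinity. It should work because $u$ is localized or decays, while $U_\tau$ behaves like $|x|^{-\sigma/(m-1)}$ by~\eqref{beh.stat}. As a consequence, $v(s)$ is bounded in $L^\infty$ uniformly for $s\ge1$ and is dominated by $2^{1/(m-1)}\varphi$, which gives tightness.

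\textbf{Step 3: Lyapunov functional.} Multiply~\eqref{eq2} by $\partial_s v^m$ to obtain the identity
\[
\frac{d}{ds}E(v)+\frac{4m}{(m+1)^2}\int |\partial_s v^{(m+1)/2}|^2\,dx=0,
\qquad
E(z)=\frac12\|\nabla z^m\|_2^2+\frac12 N_\sigma(z^m)^2-\frac{m}{m+1}\|z\|_{m+1}^{m+1}.
\]
The condition $\sigma>\sigma_0$ is exactly what makes a Caffarelli--Kohn--Nirenberg-type interpolation available, in the form
\[
\|z\|_{m+1}^{m+1}\le C\|\nabla z^m\|_2^{a}\,N_\sigma(z^m)^{b}\quad\text{with } a+b<2.
\]
Hence $E$ is coercive and bounded from below on $\mathcal{X}_+$. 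This yields bounds on $v^m$ in $\dot H^1\cap L^2_\sigma$ uniform in $s$, together with $\int_0^\infty\|\partial_s v^{(m+1)/2}\|_2^2\,ds<\infty$. The Bénilan--Crandall regularizing effect, $(m-1)\partial_t u\ge -u/t$, translates into a one-sided bound on $\partial_s v$. We will combine it with the dissipation to control $\partial_s v^m$ in $L^1_{loc}$ and thus obtain time compactness.

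\textbf{Step 4: identification of the limit.} The $\omega$-limit set is nonempty and compact in $L^{m+1}$: local compactness comes from the $\dot H^1$ bound, and tails are controlled by the weighted bound and the domination $v\le C\varphi$. Interpolation with the $L^\infty$ bound then gives compactness in $L^r$ for $r\in[m+1,\infty)$. The case $r=\infty$ follows from uniform Hölder estimates for the porous medium equation, together with the domination by $\varphi$, which decays at infinity. By the dissipation identity, every $\omega$-limit point is a non-negative stationary solution in $\mathcal{X}_+$. By Theorem~\ref{th.uniq} it is therefore either $\varphi$ or $0$.

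\textbf{Main obstacle: excluding the zero limit.} Some lower bound is needed, and we will try the following. Since $u_0\not\equiv0$, we compare $v$ from below with a subsolution $\varepsilon\psi$, where $\psi$ is the first Dirichlet eigenfunction-type function on a small ball $B$ on which $v(s_0)>0$. Because $m>1$, the linear term $v$ dominates $-\Delta v^m+|x|^\sigma v^m$ for small amplitude, so $\varepsilon\psi^{1/m}$ (suitably chosen) is a subsolution of~\eqref{eq2} for small $\varepsilon$. Positivity of $v(s_0)$ on $B$ follows from the expansion of positivity sets for the porous medium equation. Then $v(s)\ge\varepsilon\psi^{1/m}$ for all $s\ge s_0$, and $0$ cannot be an $\omega$-limit point. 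Consequently the whole trajectory converges to $\varphi$. An alternative route is to use $E(v(s))$: nontrivial small data have negative energy, whereas $E(0)=0$, but this needs the energy to become negative, which the subsolution argument provides more directly.
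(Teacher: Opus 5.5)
Your argument is correct in outline, but it reaches the two decisive points, the upper bound and the exclusion of the zero limit, by a different route from the paper.

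\textbf{Comparison of routes.} The paper never compares with the shifted solutions $U_\tau$. It uses the B\'enilan--Crandall monotonicity \eqref{x11}: $s\mapsto (1-e^{-(m-1)s})^{1/(m-1)}v(s,x)$ is non-decreasing. Passing to the limit along a sequence realizing $V\in\omega(u_0)$ gives \eqref{fg01}, namely $V^{1/m}\ge (1-e^{-(m-1)s})^{1/(m-1)}v(s)$ for all $s>0$. This single inequality excludes $V\equiv 0$, which replaces your subsolution $\varepsilon\psi^{1/m}$ and the positivity and continuity input it needs. Once $\omega(u_0)=\{\varphi^m\}$ is known, the same inequality yields the friendly giant bound a posteriori. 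The paper also does not take Theorem~\ref{th.uniq} as given. It first identifies the limit for radially non-increasing data via Proposition~\ref{prop.exuniq}, then sandwiches general data between radial ones, and this is exactly how the non-radial uniqueness in Theorem~\ref{th.uniq} is proved. Citing it is legitimate, but it hides that step. Your a priori comparison buys a universal $L^\infty$ bound and tail control before any asymptotic analysis. The paper's route buys economy and independence from prior knowledge of $\varphi$, which matters because uniqueness of $\varphi$ is only obtained through the dynamics. Incidentally, $L^{m+1}$ tightness needs no domination by $\varphi$: the uniform bound on $N_\sigma(v^m)$ together with Lemma~\ref{lem.tail} suffices, since $\sigma>\sigma_0$.

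\textbf{Correction to Step~2.} $u\le U_\tau$ cannot hold for a fixed $\tau>0$ in general, and ``$u$ is localized or decays'' is not why the comparison works. Data in $L^\infty_+\cap\mathcal{X}$ need not be $O(|x|^{-\sigma/(m-1)})$. For instance, $u_0(x)=(1+|x|^2)^{-\beta/2}$ with $(N+\sigma)/(2m)<\beta<\sigma/(m-1)$ is admissible; this interval is non-empty exactly because $\sigma>\sigma_0$, and then $u_0/\varphi$ is unbounded. What does work is your own ball approximation, provided you take the limits in the right order. For the Dirichlet problem on $B_R$, you have $u_R=0\le U_\tau$ on $\partial B_R$, and $\|u_0\|_\infty\le [(m-1)\tau]^{-1/(m-1)}\min_{\bar B_R}\varphi$ for $\tau\le\tau_R$. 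Let $\tau\to 0$ at fixed $R$, then let $R\to\infty$ using $u_R\uparrow u$ and uniqueness.

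\textbf{On the subsolution argument.} You do not need expansion of positivity. It suffices to know that $v(s_0)\not\equiv 0$ for some small $s_0$, which follows from weak continuity at $s=0$ as in Lemma~\ref{lem.bc}. Add to this the continuity of $v(s_0)$, which gives a ball where $v(s_0)$ is bounded below.
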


The following immediate consequence of Theorem~\ref{th.conv} expresses the asymptotic convergence in terms of the initial variables of Eq.~\eqref{eq1}.

\begin{corollary}\label{cor.conv}
In the same conditions as in Theorem~\ref{th.conv}, if $u$ is the solution to the Cauchy problem~\eqref{CP}, then
\begin{equation*}
	\lim\limits_{t\to\infty}t^{1/(m-1)}\|u(t)-U(t)\|_{r}=0, \quad r\in[m+1,\infty],
\end{equation*}
where $U$ is the separate variables solution defined in~\eqref{sep.var}. Furthermore,
\begin{equation*}
	u(t,x) \le \varphi(x) [(m-1)t]^{-1/(m-1)}, \quad (t,x)\in (0,\infty)\times \mathbb{R}^N. 
\end{equation*}
\end{corollary}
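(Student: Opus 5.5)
The corollary follows from Theorem~\ref{th.conv} by undoing the change of variables~\eqref{resc.var}. Set $s(t)=\frac{1}{m-1}\ln[1+(m-1)t]$, so that $e^{(m-1)s(t)}=1+(m-1)t$. By~\eqref{resc.var} and~\eqref{sep.var}, for $t\ge0$ and $x\in\mathbb{R}^N$,
\begin{equation*}
	u(t,x)-U(t,x)=[1+(m-1)t]^{-1/(m-1)}\big(v(s(t),x)-\varphi(x)\big).
\end{equation*}
Taking the $L^r$-norm in $x$ for $r\in[m+1,\infty]$ and multiplying by $t^{1/(m-1)}$ gives
\begin{equation*}
	t^{1/(m-1)}\|u(t)-U(t)\|_r=\left(\frac{t}{1+(m-1)t}\right)^{1/(m-1)}\|v(s(t))-\varphi\|_r .
\end{equation*}
The prefactor is bounded by $(m-1)^{-1/(m-1)}$ for all $t>0$, and $s(t)\to\infty$ as $t\to\infty$. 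Hence~\eqref{asympt.conv2} yields the first claim.

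For the pointwise bound, fix $t>0$, so that $s=s(t)>0$. Then
\begin{equation*}
	\frac{e^{(m-1)s}}{e^{(m-1)s}-1}=\frac{1+(m-1)t}{(m-1)t}.
\end{equation*}
The upper bound of Theorem~\ref{th.conv} therefore gives
\begin{equation*}
	v(s,x)\le\left(\frac{1+(m-1)t}{(m-1)t}\right)^{1/(m-1)}\varphi(x).
\end{equation*}
Multiplying by $[1+(m-1)t]^{-1/(m-1)}$ and using $u(t,x)=[1+(m-1)t]^{-1/(m-1)}v(s,x)$, we obtain
\begin{equation*}
	u(t,x)\le[(m-1)t]^{-1/(m-1)}\varphi(x).
\end{equation*}

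There is no real obstacle here. The only point to check is that the algebraic identity above turns the $s$-dependent factor into exactly $[(m-1)t]^{-1/(m-1)}$. All the analytic content sits in Theorem~\ref{th.conv}, which we assume.
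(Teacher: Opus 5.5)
Your proof is correct and is essentially the paper's own argument. The paper states the corollary as an immediate consequence of Theorem~\ref{th.conv}, obtained by undoing the change of variables~\eqref{resc.var}: the bounded prefactor $\big(t/[1+(m-1)t]\big)^{1/(m-1)}$ transfers the convergence, and the identity $e^{(m-1)s}=1+(m-1)t$ turns the upper bound in $s$ into $[(m-1)t]^{-1/(m-1)}\varphi(x)$.
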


In other words, $(t,x)\mapsto [(m-1)t]^{-1/(m-1)} \varphi(x)$ is a so-called \textsl{friendly giant} for~\eqref{CP}, see \cite{AP81} and  \cite[Section~4.2]{DK07} for a similar result for the Cauchy-Dirichlet problem for the porous medium equation in a bounded domain, as well as \cite{MV94} for doubly nonlinear equations and \cite{CLS89} for Hamilton-Jacobi equations, still for the Cauchy-Dirichlet problem in a bounded domain. The existence of friendly giants for Cauchy problems is less customary and we refer to \cite{IM26}, besides Corollary~\ref{cor.conv}, for another result in that direction. Intuitively, due to the fast growth of the weight $|x|^\sigma$ as $|x|\to\infty$, the absorption term acts somehow as a homogeneous Dirichlet boundary condition at infinity.

We emphasize here again that the result in Corollary~\ref{cor.conv} is an unexpected effect of the presence of the weight $|x|^{\sigma}$ for $\sigma$ large enough, as it strongly departs from the convergence to solutions arising from traveling waves established for $\sigma=0$, as commented in the Introduction.

Let us now describe more precisely the main steps of the proofs of Theorems~\ref{th.uniq} and~\ref{th.conv}, which are intertwined, as we shall see. Concerning the former, a classical argument ensures that the functional
\begin{equation*}
	\mathcal{I}(z):= \frac{1}{2}\big( \|\nabla z^m\|_2^2 + N_{\sigma}^2(z^m) \big) - \frac{m}{m+1} \|z\|_{m+1}^{m+1}, \quad z\in\mathcal{X}_+,
\end{equation*}
which is well-defined on $\mathcal{X}_+$, is a Lyapunov functional for~\eqref{CP2}, while any non-negative stationary solution $v_*$ to~\eqref{eq2} is connected to a critical point $V_*$ of the functional
\begin{equation*}
	\mathcal{J}(z) = \frac{\|\nabla z\|_2^2}{2} + \frac{N_\sigma^2(z)}{2} - \frac{m}{m+1} \|z\|_{(m+1)/m}^{(m+1)/m}, \quad z\in \mathcal{Y},
\end{equation*}
by the identity $v_*=V_*^{1/m}$, due to the obvious identity $\mathcal{J}(z) = \mathcal{I}\big(z^{1/m}\big)$ for $z\in\mathcal{Y}_+$. We shall thus prove the existence of a stationary solution to~\eqref{eq2} by showing that $\mathcal{J}$ is bounded from below on $\mathcal{Y}$ and has at least a non-negative and non-trivial radially symmetric minimizer with non-increasing profile. The uniqueness part of Theorem~\ref{th.uniq} is very important for the rest of the analysis and is split into two steps. We first prove the uniqueness under the assumption of radial symmetry and monotonicity and thereby establish that Eq.~\eqref{eq2} has a unique non-trivial radially symmetric stationary solution $\varphi\in\mathcal{X}_+$ with non-increasing profile. We next show the stability of $\varphi$ for the dynamics of~\eqref{CP2} restricted to non-negative radially symmetric initial conditions with non-increasing profiles. In a second step, (parabolic) comparison arguments are used to established that the basin of attraction of $\varphi$ actually includes a broader class of initial data and in particular any non-negative and non-trivial stationary solution to~\eqref{eq2}. The proof of this last property also relies on (elliptic) comparison arguments, which are used to establish that any critical point $V$ of $\mathcal{J}$ in $\mathcal{Y}_+$ is actually a bounded and positive classical solution to the elliptic equation
\begin{equation}
	- \Delta V(x) + |x|^\sigma V(x) - V^{1/m}(x) = 0, \quad x\in\mathbb{R}^N, \label{EE}
\end{equation}
which satisfies the tail estimate $V(x)\le C |x|^{-m\sigma/(m-1)}$, $x\in\mathbb{R}^N$, for some $C>0$.

The optimality of the range of $\sigma$ for which Theorem~\ref{th.uniq} holds true is guaranteed by the next result, which is a consequence of a Pohozaev identity.

\begin{proposition}\label{prop.ne}
	For $\sigma\in[0,\sigma_0]$, there is no non-trivial stationary solution to Eq.~\eqref{eq2} belonging to $\mathcal{X}$.
\end{proposition}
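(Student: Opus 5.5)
The plan is to derive a Pohozaev identity for the elliptic equation satisfied by $V:=|v|^{m-1}v$ and to combine it with the energy identity. By the observation following the definition of $\mathcal{Y}$, a stationary solution $v\in\mathcal{X}$ of Eq.~\eqref{eq2} corresponds to $V\in\mathcal{Y}$ solving, in the weak sense,
\begin{equation*}
	-\Delta V+|x|^\sigma V-|V|^{(1-m)/m}V=0 \quad \text{in } \mathbb{R}^N,
\end{equation*}
which is the sign-changing version of~\eqref{EE}. Set
\begin{equation*}
	A:=\|\nabla V\|_2^2, \qquad B:=N_\sigma^2(V), \qquad C:=\|V\|_{(m+1)/m}^{(m+1)/m}.
\end{equation*}
All three quantities are finite because $V\in\mathcal{Y}$. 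Testing the equation with $V$ itself is legitimate, since $V\in\dot H^1\cap L^2_\sigma$ and $|V|^{1/m}\in L^{m+1}$ is dual to $V\in L^{(m+1)/m}$. This yields the energy identity $A+B=C$.

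Next I test the equation formally with $x\cdot\nabla V$ and integrate by parts term by term:
\begin{equation*}
	\int_{\mathbb{R}^N}(-\Delta V)\,x\cdot\nabla V\,dx=-\frac{N-2}{2}A, \qquad \int_{\mathbb{R}^N}|x|^\sigma V\,x\cdot\nabla V\,dx=-\frac{N+\sigma}{2}B, \qquad \int_{\mathbb{R}^N}|V|^{(1-m)/m}V\,x\cdot\nabla V\,dx=-\frac{Nm}{m+1}C.
\end{equation*}
This gives the Pohozaev identity $\frac{N-2}{2}A+\frac{N+\sigma}{2}B=\frac{Nm}{m+1}C$. Substituting $C=A+B$, I obtain
\begin{equation*}
	-\frac{N(m-1)+2(m+1)}{2(m+1)}\,A+\frac{\sigma-\sigma_0}{2}\,B=0 .
\end{equation*}
For $\sigma\in[0,\sigma_0]$ both coefficients are non-positive, and the first is strictly negative. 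Since $A,B\ge0$, it follows that $A=0$. An element of $\dot H^1(\mathbb{R}^N)$ with zero gradient is the zero function, so $V\equiv0$ and hence $v\equiv0$.

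The main obstacle is justifying the Pohozaev computation for a merely weak solution in $\mathcal{Y}$. I would proceed in three steps.
\begin{enumerate}
	\item Local regularity: the right-hand side $|x|^\sigma V-|V|^{(1-m)/m}V$ lies in $L^2_{loc}$, because $V\in L^2_{loc}$ and $|V|^{1/m}\in L^{m+1}_{loc}\subset L^2_{loc}$. Interior elliptic regularity then gives $V\in H^2_{loc}(\mathbb{R}^N)$, so every integration by parts above is valid on bounded sets. The map $r\mapsto|r|^{(1-m)/m}r$ is continuous, and $|V|^{(m+1)/m}$ is in $W^{1,1}_{loc}$ with gradient $\frac{m+1}{m}|V|^{(1-m)/m}V\,\nabla V$ (chain rule), which handles the nonlinear term.
	\item Truncation: I test instead with $\psi_R(x)\,x\cdot\nabla V$, where $\psi_R(x)=\psi(x/R)$ is a smooth cut-off equal to $1$ on $B_R$ and supported in $B_{2R}$. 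The identity then holds up to error terms of the form $\int_{R<|x|<2R}\frac{|x|}{R}\big(|\nabla V|^2+|x|^\sigma V^2+|V|^{(m+1)/m}\big)dx$.
	\item Limit: since $|x|/R\le 2$ on the annulus, these error terms are bounded by the tails of the integrable functions defining $A$, $B$ and $C$. They therefore vanish as $R\to\infty$, which yields the Pohozaev identity and completes the argument.
\end{enumerate}
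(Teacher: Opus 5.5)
Your proposal is correct and follows the paper's argument: a Pohozaev identity from testing with $x\cdot\nabla V$, combined with the energy identity $\|\nabla V\|_2^2+N_\sigma^2(V)=\|V\|_{(m+1)/m}^{(m+1)/m}$ and a sign argument forcing $\nabla V=0$. You eliminate the $L^{(m+1)/m}$ term where the paper eliminates $N_\sigma^2(V)$, which is equivalent; your $H^2_{\mathrm{loc}}$-plus-cut-off justification supplies details the paper takes from its cited reference, and your sign-changing treatment fits the statement's $\mathcal{X}$ slightly better than the paper's proof, which only considers $V\ge 0$.
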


It is actually likely that the elliptic equation~\eqref{EE} has non-negative solutions for $\sigma\in [0,\sigma_0]$ but they do not belong to $\mathcal{X}$ and are thus excluded from the variational framework associated to~\eqref{CP2}.

\medskip

The next section is devoted to the well-posedness of~\eqref{CP} and~\eqref{CP2}, along with some properties of non-negative solutions including lower and upper bounds and the availability of a Lyapunov functional for~\eqref{CP2}. We also recall in Section~\ref{sec.aux} the Caffarelli-Kohn-Nirenberg (CKN) inequalities, which provide a lower bound for the above mentioned Lyapunov functional. Section~\ref{sec.stso} focuses on the stationary solutions to Eq.~\eqref{eq2}, which we analyze through the equivalent formulation~\eqref{EE}. This section first gathers properties of generic stationary solutions, including regularity, boundedness and tail estimates. The second part of Section~\ref{sec.stso} is restricted to radially symmetric stationary solutions with non-increasing profile and culminates actually in the uniqueness of such a stationary solution, proving Theorem~\ref{th.uniq} in this specific setting. The dynamics of~\eqref{CP2} is studied in Section~\ref{sec.cv} and we start with the compactness of the trajectories, along with the identification of their $\omega$-limit set. All the information obtained so far is then unified to allow us to prove Theorem~\ref{th.conv} and to complete the proof of Theorem~\ref{th.uniq}. The final section is devoted to the non-existence of stationary solutions to~\eqref{EE} belonging to $\mathcal{Y}_+$ for $\sigma\in [0,\sigma_0]$.

\section{Auxiliary results}\label{sec.aux}

We gather in this section a number of preparatory results related to Eq.~\eqref{eq1} and Eq.~\eqref{eq2}, concerning their well-posedness in $L_+^{\infty}(\mathbb{R}^N)$ and in $\mathcal{X}_+$, the definition of the Lyapunov functional and an optimal CKN inequality. Some of these results have independent interest, besides being employed in the proofs of Theorem~\ref{th.uniq} and Theorem~\ref{th.conv}. For simplicity, the section is divided into short specialized subsections.

\subsection{Well-posedness in $L^{\infty}(\mathbb{R}^N)$}

We recall a general well-posedness result, valid for initial conditions $u_0\in L^{\infty}_{+}(\mathbb{R}^N)$. To this end, we first introduce the notion of weak solution as follows:

\begin{definition}\label{def.ws}
A non-negative weak solution to the Cauchy problem~\eqref{CP} is a function $u\in L_+^\infty((0,\infty)\times\mathbb{R}^N)$ such that, for any $T>0$,
\begin{equation*}
	u^m \in L^2\big((0,T),H^1_{\text{loc}}(\mathbb{R}^N)\big)
\end{equation*}
and it satisfies the weak formulation
\begin{equation}\label{weak.sol}
	\int_0^T \int_{\mathbb{R}^N} \Big[(u_0-u) \partial_t \zeta + \nabla u^m \cdot \nabla\zeta + |x|^\sigma u^{m} \zeta \Big]\ dxds = 0
\end{equation}
for all $\zeta\in C_c^1([0,T)\times\mathbb{R}^N)$.
\end{definition}

The following result establishes the well-posedness and the comparison principle for solutions to the Cauchy problem~\eqref{CP} with bounded and non-negative initial conditions.

\begin{proposition}\label{prop.wp}
Let $m>1$, $\sigma>0$ and $u_0\in L^{\infty}_{+}(\mathbb{R}^N)$. Then there is a unique non-negative weak solution to the Cauchy problem~\eqref{CP} which satisfies
\begin{equation}\label{wp0}
	\|u(t)\|_\infty \le \|u_0\|_\infty\,, \qquad t\ge 0.
\end{equation}
Furthermore, the following comparison principle holds true: given $u_{0,i}\in L_+^\infty(\mathbb{R}^N)$, $i=1,2$, such that $u_{0,1}\le u_{0,2}$ in $\mathbb{R}^N$, the corresponding non-negative weak solutions $u_1$ and $u_2$ to the Cauchy problem~\eqref{CP} satisfy $u_1\le u_2$ in $(0,\infty)\times\mathbb{R}^N$.
\end{proposition}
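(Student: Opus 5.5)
Existence will come from approximation on bounded domains, and uniqueness together with comparison from an $L^1$-contraction type duality argument adapted to the unbounded weight. For $R>0$ and $B_R:=\{|x|<R\}$, I consider the Cauchy--Dirichlet problem
\begin{equation*}
	\partial_t u_R=\Delta u_R^m-|x|^\sigma u_R^m \;\text{ in }\; (0,\infty)\times B_R, \qquad u_R=0 \;\text{ on }\; (0,\infty)\times\partial B_R, \qquad u_R(0)=u_0 \;\text{ in }\; B_R.
\end{equation*}
On $B_R$ the weight $|x|^\sigma$ is bounded and the absorption $|x|^\sigma u^m$ is monotone in $u$. Standard theory for porous-medium type equations with a monotone lower-order term therefore applies: either the Brezis--Crandall nonlinear semigroup in $L^1(B_R)$, or regularization $u_0+\varepsilon$ followed by the limit $\varepsilon\to0$. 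This provides a unique non-negative bounded weak solution $u_R$, together with the comparison principle on $B_R$.

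Since the absorption is non-negative, constants are supersolutions, which gives $0\le u_R\le\|u_0\|_\infty$. Since $u_{R'}\ge0$ on $\partial B_R$ for $R'>R$, the family is monotone: $u_R\le u_{R'}$ on $B_R$ (extending $u_R$ by zero outside $B_R$). I then define $u:=\lim_{R\to\infty}u_R$ by monotone convergence, which immediately yields~\eqref{wp0}.

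To pass to the limit in the weak formulation, I need local energy bounds. Testing the equation for $u_R$ with $u_R^m\psi^2$, where $\psi\in C_c^\infty(\mathbb{R}^N)$ is a cutoff, gives a bound on $\int_0^T\int|\nabla u_R^m|^2\psi^2$. This bound depends only on $\|u_0\|_\infty$, $T$ and $\mathrm{supp}\,\psi$, since the $|x|^\sigma$ term carries a good sign and is bounded on $\mathrm{supp}\,\psi$. It follows that $u_R^m\rightharpoonup u^m$ weakly in $L^2(0,T;H^1_{loc})$, the identification of the weak limit following from monotone a.e. convergence. Passing to the limit in~\eqref{weak.sol} is then straightforward, because test functions have compact support. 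Monotonicity in $R$ also transfers comparison to the limits: if $u_{0,1}\le u_{0,2}$, then $u_{1,R}\le u_{2,R}$ for every $R$, hence $u_1\le u_2$. This proves the comparison statement for the constructed solutions.

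The main obstacle is uniqueness in the class of Definition~\ref{def.ws}, which prescribes no behaviour at infinity. My strategy is to show that any weak solution $w$ coincides with the constructed solution $u$.
\begin{itemize}
\item[$\triangleright$] \emph{Minimality.} First I show $w\ge u_R$ for every $R$, whence $w\ge u$. Since $w\ge0=u_R$ on $\partial B_R$, this follows from a Kato-type inequality for $(u_R-w)_+$ on $B_R$, which uses only local $H^1$ regularity. Mollification in time will be needed to justify it, in the spirit of Otto's $L^1$-contraction argument or a doubling-of-variables argument.
\item[$\triangleright$] \emph{Maximality.} Next I show $w\le u$. Here I use the duality/Kato argument on $\mathbb{R}^N$ with the cutoff $\psi_R(x)=\psi(x/R)$:
\begin{equation*}
	\frac{d}{dt}\int (w-u)_+\psi_R\le \int |w^m-u^m|\,\big(\Delta\psi_R-|x|^\sigma\psi_R\big)_{+}\,dx,
\end{equation*}
understood in the distributional sense.
\end{itemize}
The key point in the maximality step is that the strong absorption helps. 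I will choose $\psi_R$ so that $\Delta\psi_R\le|x|^\sigma\psi_R$ wherever $|x|\ge1$: for instance $\psi_R=e^{-\lambda(|x|-R)_+}$-type profiles with $\lambda$ chosen against $|x|^\sigma$, or simply observe that $|\Delta\psi_R|\le CR^{-2}\le R^\sigma\psi_R$ on the region where $\psi_R$ is not too small. With this choice the right-hand side is controlled by $CR^{-2}$ times a bounded quantity on a bounded region near the origin, and it vanishes as $R\to\infty$. Since $(w-u)_+(0)=0$, this gives $w\le u$.

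If this cutoff computation fails near the transition layer of $\psi_R$, the fallback is to use that both solutions are bounded by $\|u_0\|_\infty$, so that $|w^m-u^m|\le C$. The right-hand side is then bounded by $C\int(\Delta\psi_R-|x|^\sigma\psi_R)_+$, which can be made $o(1)$ as $R\to\infty$ by choosing $\psi_R$ decaying on the scale $R^{-\sigma/2}$, since the weight dominates there.

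Combining minimality and maximality gives $w=u$, which proves uniqueness. Comparison for general weak solutions then follows from the comparison already established for the constructed ones.
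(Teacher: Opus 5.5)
The paper gives no argument of its own here. It quotes the statement as the case $q=m$ of a more general well-posedness result in \cite[Section~2]{ILS24}.

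Your self-contained route is sound in outline: exhaustion by balls $B_R$ with Dirichlet data, a monotone limit giving existence, \eqref{wp0} and comparison among the constructed solutions, then uniqueness by an $L^1$/Kato argument. Its value is that it isolates the single non-elementary ingredient, the Kato inequality
\[
\partial_t(w-u)_+\le\Delta(w^m-u^m)_+-|x|^\sigma(w^m-u^m)_+ ,
\]
which must hold up to $t=0$ with initial data taken in the weak sense of Definition~\ref{def.ws}. You rightly import it from Otto-type time doubling or Carrillo-type arguments. These apply once the equation is written as $\partial_t b(V)=\Delta V-|x|^\sigma V$ with $V=u^m$ and $b(V)=V^{1/m}$. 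The citation, by contrast, buys generality in the exponent $q$ but hides this structure.

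Your uniqueness step can be streamlined, and part of its justification is off.

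\textbf{Minimality and the absorption are both unnecessary.} Both $w$ and $u$ are bounded by some $M$. Note that for $w$ this bound is $\|w\|_\infty$, not $\|u_0\|_\infty$ as you write in the fallback. Test the Kato inequality with $\psi=(1+|x|^2)^{-k}$, $2k>N$; this is legitimate after a cutoff at infinity, since $\psi,\nabla\psi\in L^1$ and the solutions are bounded. Using $|\Delta\psi|\le C_k\psi$ and $(w^m-u^m)_+\le mM^{m-1}(w-u)_+$, you get $\frac{d}{dt}\int(w-u)_+\psi\le C\int(w-u)_+\psi$. Gronwall then gives $w\le u$ directly, symmetrically in $u$ and $w$.

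\textbf{Your cutoff discussion is inaccurate.} With $\psi_R=\psi(\cdot/R)$, the Laplacian lives on the annulus $R<|x|<2R$, not near the origin, so the crude bound is $CR^{N-2}$. Your fallback also fails as stated. A profile that completes its descent from $1$ to $0$ within a layer of width $O(R^{-\sigma/2})$ still leaves $\int(\Delta\psi_R-|x|^\sigma\psi_R)_+\,dx$ of order $R^{N-1+\sigma/2}$. It would need an exponential tail of length $\gg R^{-\sigma/2}\ln R$ before being cut off.

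\textbf{Your exponential option is the one that works cleanly.} Take $\psi_R=e^{-\lambda(|x|-R)_+}$ with $\lambda^2\le R^\sigma$. Then:
\begin{itemize}
\item $\Delta\psi_R\le\lambda^2\psi_R\le|x|^\sigma\psi_R$ for $|x|>R$;
\item $\Delta\psi_R=0$ in $B_R$;
\item the kink on $\partial B_R$ contributes a non-positive measure.
\end{itemize}
So the right-hand side is non-positive for a single $R$, and no limit $R\to\infty$ is needed.
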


Proposition~\ref{prop.wp} is just a particular case (letting $q=m$ in the notation therein) of a more general result given in \cite[Section~2]{ILS24}. The following standard property will be very useful in the proof of Theorem~\ref{th.conv}.

\begin{lemma}\label{lem.rad}
Let $u_0\in L^{\infty}_+(\mathbb{R}^N)$ be radially symmetric with non-increasing profile in radial variables and $u$ be the solution to the Cauchy problem~\eqref{CP}. Then $u(t)$ is radially symmetric with non-increasing profile for any $t\in(0,\infty)$.
\end{lemma}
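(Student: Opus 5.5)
Both properties will follow from the uniqueness and comparison principle in Proposition~\ref{prop.wp}, combined with the invariance of~\eqref{eq1} under orthogonal transformations and under reflections across hyperplanes through the origin.

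\emph{Radial symmetry.} Fix $R\in O(N)$ and set $\tilde u(t,x):=u(t,Rx)$. Since $|Rx|=|x|$ and $\Delta(w\circ R)=(\Delta w)\circ R$, a change of variables $x\mapsto R^{-1}x$ in the test functions of~\eqref{weak.sol} shows that $\tilde u$ is a non-negative weak solution to~\eqref{CP} with initial datum $u_0\circ R=u_0$. The regularity requirements of Definition~\ref{def.ws} are clearly preserved. By the uniqueness part of Proposition~\ref{prop.wp}, $\tilde u=u$. As $R$ is arbitrary, $u(t)$ is radially symmetric for every $t>0$.

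\emph{Monotonicity.} I will use the Alexandrov reflection (moving plane) argument. Fix a unit vector $e$ and $\lambda>0$. Let $H_\lambda:=\{x\cdot e>\lambda\}$ and let $x_\lambda:=x-2(x\cdot e-\lambda)e$ be the reflection across $\partial H_\lambda$. For $x\in H_\lambda$ we have $|x_\lambda|<|x|$, and since $u_0$ is radial and non-increasing, $u_0(x)\le u_0(x_\lambda)$ in $H_\lambda$.

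Set $w(t,x):=u(t,x_\lambda)$. This function solves $\partial_t w=\Delta w^m-|x_\lambda|^\sigma w^m$, so in $H_\lambda$ it is a \emph{supersolution} of~\eqref{eq1}: the weight satisfies $|x_\lambda|^\sigma\le|x|^\sigma$ there and $w^m\ge0$, hence $\partial_t w-\Delta w^m+|x|^\sigma w^m\ge0$. On $\partial H_\lambda$ we have $w=u$. A comparison principle for the Cauchy--Dirichlet-type problem in the half-space $H_\lambda$ then gives $u\le w$ in $(0,\infty)\times H_\lambda$. Combined with radial symmetry, this yields the non-increasing radial profile: take $x=re$ with $r>\lambda$, so that $x_\lambda=(2\lambda-r)e$ and $|x_\lambda|=|r-2\lambda|$, and let $\lambda$ range over $(0,\infty)$. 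Any pair $0\le r_1<r_2$ is covered by choosing $\lambda=(r_1+r_2)/2$, which gives $u(t,r_2e)\le u(t,r_1e)$.

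\emph{Main obstacle.} The hard step is justifying the comparison in the unbounded half-space for weak, degenerate solutions, since Proposition~\ref{prop.wp} only covers the whole space. I would prove it directly by an $L^1$-contraction (Kato) argument. Test the difference of the weak formulations for $u$ and $w$ with $\mathrm{sign}_\varepsilon^+(u^m-w^m)\,\psi_R$, where $\psi_R$ is a cut-off. The boundary terms vanish because $u^m-w^m=0$ on $\partial H_\lambda$. The absorption terms have a good sign, because $|x|^\sigma u^m-|x_\lambda|^\sigma w^m\ge |x_\lambda|^\sigma(u^m-w^m)$. The cut-off errors are controlled as $R\to\infty$ using the $L^\infty$ bounds on $u$ and $w$ together with a weighted cut-off, e.g.\ with $e^{-|x|}$ in place of $\psi_R$, to handle $L^\infty$ data. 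This yields $\int_{H_\lambda}(u-w)_+(t)\,e^{-|x|}\,dx\le C\int_{H_\lambda}(u_0-w_0)_+\,e^{-|x|}\,dx=0$.

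As an alternative, one can approximate by the Cauchy--Dirichlet problems in balls $B_R$ with non-degenerate regularised data, where classical solutions are available, apply the reflection argument there, and pass to the limit using the uniqueness in Proposition~\ref{prop.wp}, which is presumably how~\cite{ILS24} constructs the solution.
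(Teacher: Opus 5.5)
Your proof is correct, but for the monotonicity it takes a different route from the paper.

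The paper gives only a formal argument. It differentiates the radial equation \eqref{eq1.rad} in $r$ and obtains the degenerate equation \eqref{eq1.deriv} for $w=\partial_r u$. It then notes that $w\equiv 0$ is a supersolution, because the extra term $-\sigma r^{\sigma-1}u^m$ is non-positive, and propagates $\partial_r u_0\le 0$ by comparison on the positivity set of $u$. (The text's ``$w(t)\ge 0$'' is a sign slip for $w(t)\le 0$.) Rigor is deferred to an unspecified approximation argument.

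You instead use reflection comparison across $\{x\cdot e=\lambda\}$, $\lambda>0$. The same structural fact, that the weight increases with $|x|$, enters as $|x_\lambda|^\sigma\le |x|^\sigma$ on $H_\lambda$, which makes $u(t,x_\lambda)$ a supersolution there. Since $u_0$ already satisfies the reflected inequality for every $\lambda$, no continuity in $\lambda$ is needed, and the choice $\lambda=(r_1+r_2)/2$ concludes.

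Your route works at the level of $u$ itself. It avoids differentiating the equation, the singular coefficients at $r=0$, and comparison on an a priori irregular positivity set. All it needs is a half-space comparison for bounded weak solutions with equal traces on $\partial H_\lambda$, which your Kato-type argument supplies. With the weight $e^{-|x|}$, one has $\Delta e^{-|x|}\le e^{-|x|}$ on $H_\lambda$ because $0\notin\overline{H_\lambda}$, so the Gronwall step closes.

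Two points remain to be filled in:
\begin{itemize}
\item justifying the time-derivative term tested against $\mathrm{sign}^+_\varepsilon(u^m-w^m)\psi$, which you correctly flag and which is handled by Steklov averages or approximation;
\item reading the pointwise conclusion almost everywhere, or invoking continuity of $u$.
\end{itemize}
With these standard additions your argument is at least as rigorous as the paper's. Your symmetry step via uniqueness in Proposition~\ref{prop.wp} is the precise form of the paper's appeal to rotational invariance.
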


\begin{proof}
The radial symmetry of $u(t)$ for $t>0$ is obvious from the rotational invariance of Eq.~\eqref{eq1}. As for the monotonicity of the profile, we provide a formal proof, which can be made rigorous by an approximation argument as usual. Introducing the radially symmetric variable $r=|x|$ and setting $u(r)=u(|x|)=u(x)$ for simplicity, we derive the equation solved by $\partial_r u$. To this end, we start from Eq.~\eqref{eq1} written in radially symmetric variables as
\begin{equation}\label{eq1.rad}
	\partial_t u(t,r) = \partial_r^2 u^m(t,r) + \frac{N-1}{r} \partial_r u^m(t,r) - r^{\sigma} u^m(t,r), \quad (t,r)\in (0,\infty)^2,
\end{equation}
and differentiate~\eqref{eq1.rad} with respect to $r$ to find, by direct calculation, that $w:=\partial_r u$ solves
\begin{equation}\label{eq1.deriv}
	\begin{split}
		\partial_t w &=mu^{m-1} \partial_r^2 w + 3m(m-1)u^{m-2}w\partial_r w + m(m-1)(m-2)u^{m-3}w^3\\
		& \quad -\frac{m(N-1)}{r^2}u^{m-1}w + \frac{m(N-1)(m-1)}{r}u^{m-2}w^2\\
		& \quad + \frac{m(N-1)}{r}u^{m-1} \partial_r w - mr^{\sigma}u^{m-1}w - \sigma r^{\sigma-1}u^m.
	\end{split}
\end{equation}
This is a degenerate parabolic equation allowing for a comparison principle on the positivity set of $u$. Since $w\equiv0$ is a strict supersolution to~\eqref{eq1.deriv}, we infer by comparing on the positivity set of $u$ that $w(t)\geq0$ for any $t\geq0$, provided $w_0=\partial_r u_0\leq0$, which completes the proof.
\end{proof}

\subsection{Lower and upper bounds}

We supplement Proposition~\ref{prop.wp} with the following general lower bound for non-negative solutions to Eq.~\eqref{eq1}, which will be essential in the sequel for showing that the limit of the corresponding solution $v(s)$ to Eq.~\eqref{eq2} as $s\to\infty$ is not the zero function.

\begin{lemma}\label{lem.bc}
Let $m>1$, $\sigma>0$ and $u_0\in L^{\infty}_{+}(\mathbb{R}^N)$. Then the corresponding	weak solution $u$ to Eq.~\eqref{CP} satisfies
\begin{equation}
	t^{1/(m-1)} u(t,x) \ge t_0^{1/(m-1)} u(t_0,x), \qquad 0<t_0<t, \ x\in\mathbb{R}^N. \label{tmu}
\end{equation}
Moreover, if $u_0\not\equiv 0$, then $u(t)\not\equiv 0$ for all $t>0$.
\end{lemma}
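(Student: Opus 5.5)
The estimate \eqref{tmu} is the B\'enilan--Crandall homogeneity argument, and I would prove it through the scaling invariance of Eq.~\eqref{eq1} together with the comparison principle of Proposition~\ref{prop.wp}. Since the diffusion and the absorption are both homogeneous of degree $m$ in $u$ and the equation has no explicit time dependence, for every $\lambda>0$ the function
\[
u_\lambda(t,x):=\lambda^{1/(m-1)}u(\lambda t,x)
\]
is again a non-negative weak solution to Eq.~\eqref{eq1}, now with initial condition $\lambda^{1/(m-1)}u_0$. This follows from a change of variables $t\mapsto\lambda t$ in the weak formulation \eqref{weak.sol}. By uniqueness it is the solution given by Proposition~\ref{prop.wp}.

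I then apply the comparison principle at a shifted initial time. Fix $0<t_0<t$ and set $\lambda:=t/t_0>1$. Consider the two functions $w_1(\tau,x):=u(t_0+\tau,x)$ and $w_2(\tau,x):=u_{\lambda}(t_0/\lambda\cdot 1+\tau\ldots)$. It is more convenient to write the second one directly as $w_2(\tau,x):=\lambda^{1/(m-1)}u(t_0+\lambda\tau,x)$, which solves Eq.~\eqref{eq1} in $\tau$ by the scaling above. Both are solutions with bounded non-negative initial data $u(t_0)$ and $\lambda^{1/(m-1)}u(t_0)$ respectively. Since $\lambda>1$ and $u(t_0)\ge 0$, we have $u(t_0)\le\lambda^{1/(m-1)}u(t_0)$. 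Proposition~\ref{prop.wp}, combined with the semigroup property coming from uniqueness, then gives $w_1\le w_2$, that is
\[
u(t_0+\tau,x)\le \lambda^{1/(m-1)}u(t_0+\lambda\tau,x).
\]

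This inequality is not yet in the form of \eqref{tmu}, so it is cleaner to scale about the origin in time. The function $\tilde u(t,x):=\lambda^{1/(m-1)}u(\lambda t,x)$ has initial datum $\lambda^{1/(m-1)}u_0\ge u_0$ for $\lambda\ge1$. Comparison yields $u(t,x)\le\lambda^{1/(m-1)}u(\lambda t,x)$ for all $t>0$ and $\lambda\ge1$. Taking the time $t_0$ and $\lambda=t/t_0$ gives
\[
u(t_0,x)\le (t/t_0)^{1/(m-1)}u(t,x),
\]
which is exactly \eqref{tmu}. The pointwise meaning (a.e.\ $x$, every $t$) relies on the continuity in time of weak solutions with values in $L^1_{\rm loc}$, which I take as part of the well-posedness theory of \cite{ILS24}.

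For the non-vanishing statement, suppose $u(t_1)\equiv0$ for some $t_1>0$. Then \eqref{tmu} forces $u(t_0)\equiv0$ for every $t_0\in(0,t_1)$. Letting $t_0\to0$ and using $u(t_0)\to u_0$ in $L^1_{\rm loc}(\mathbb{R}^N)$ (or weak-$*$ in $L^\infty$, which the weak formulation \eqref{weak.sol} provides), we get $u_0\equiv0$, a contradiction.

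The main obstacle is this last step, namely justifying the time continuity at $t=0$ from Definition~\ref{def.ws}. I would obtain it by testing \eqref{weak.sol} with $\zeta(s,x)=\eta(s)\psi(x)$, where $\eta$ approximates $\mathbf 1_{[0,t_0]}$. This gives
\[
\Big|\int (u(t_0)-u_0)\psi\,dx\Big|\le C(\psi)\,t_0^{1/2}\to0,
\]
using the $L^\infty$ bound \eqref{wp0} and the local $L^2H^1$ bound on $u^m$.
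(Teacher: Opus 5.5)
Your proof is correct. It differs from the paper's only in how \eqref{tmu} is obtained.

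The paper quotes the B\'enilan--Crandall regularizing effect \cite[Theorem~2]{BC81} in its differential form \eqref{bcest}, namely $\partial_t u\ge -u/((m-1)t)$ in $\mathcal{D}'$. It then integrates this to get the monotonicity of $t\mapsto t^{1/(m-1)}u(t,x)$. You instead derive the integrated inequality directly from the invariance $u\mapsto \lambda^{1/(m-1)}u(\lambda t,x)$ of \eqref{eq1}, combined with uniqueness and comparison from Proposition~\ref{prop.wp}. Your change of variables in \eqref{weak.sol} checks out, since $m/(m-1)-1=1/(m-1)$.

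This scaling-plus-comparison argument is exactly the mechanism behind the B\'enilan--Crandall theorem, so your version is a self-contained proof of the cited result. It uses only what the paper has already stated and avoids checking the abstract hypotheses of \cite{BC81}. It also loses nothing: monotonicity of $t\mapsto t^{1/(m-1)}u(t,x)$ gives back \eqref{bcest} by differentiation in $\mathcal{D}'$, which is the form reused in Lemma~\ref{lem.lub}.

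For the non-vanishing part, your argument and the paper's coincide in substance. Both extract weak continuity at $t=0$ from \eqref{weak.sol} using the local $L^2H^1$ bound on $u^m$, then propagate with \eqref{tmu}. The paper passes by density to balls and gets positive mass on a short time interval. Your contradiction argument with arbitrary test functions is slightly shorter.

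Two cosmetic points:
\begin{enumerate}
\item The paragraph with $w_1,w_2$ (shifting the initial time to $t_0$, which would additionally require the semigroup property) is unnecessary and should be dropped.
\item What \eqref{weak.sol} yields is convergence of $u(t_0)$ to $u_0$ against test functions, hence weak-$*$ in $L^\infty$ by \eqref{wp0}. It does not give convergence in $L^1_{\rm loc}$. This is all you actually use, so the argument stands.
\end{enumerate}
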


\begin{proof}
We first recall that the homogeneity regularizing effect derived in the classical paper \cite[Theorem~2]{BC81} implies that the non-negative weak solution $u$ to Eq.~\eqref{CP} satisfies
\begin{equation}\label{bcest}
	\partial_t u(t,x) \geq - \frac{u(t,x)}{(m-1)t}, \quad (t,x)\in(0,\infty)\times\mathbb{R}^N,
\end{equation}
in the sense of distributions, from which we deduce that the map $t\mapsto t^{1/(m-1)} u(t,x)$ is a non-decreasing function of time for all $x\in\mathbb{R}^N$, whence~\eqref{tmu}.

We next infer from~\eqref{weak.sol} that, for $t>0$ and $\zeta\in C_c^1(\mathbb{R}^N)$,
\begin{align*}
	\int_{\mathbb{R}^N} \zeta(x) u(t,x)\ dx & = \int_{\mathbb{R}^N} \zeta(x) u_0(x)\ dx \\
	& \quad - \int_0^t \int_{\mathbb{R}^N} \left[ \nabla \zeta(x)\cdot \nabla u^m(s,x) + \zeta(x) |x|^\sigma u^m(s,x) \right]\ dxds.
\end{align*}
Since $u^m\in L^2\big((0,t),H^1_{\text{loc}}(\mathbb{R}^N)\big)$ and $(s,x)\mapsto |x|^\sigma u^m(s,x)\in L^2\big((0,t),L^2_{\text{loc}}(\mathbb{R}^N)\big)$, we may let $t\to 0$ in the above identity and deduce from the Lebesgue dominated convergence theorem that
\begin{equation*}
	\lim_{t\to 0} \int_{\mathbb{R}^N} \zeta(x) u(t,x)\ dx = \int_{\mathbb{R}^N} \zeta(x) u_0(x)\ dx.
\end{equation*}
A density argument then implies that, for any $R>0$,
\begin{equation}
	\lim_{t\to 0} \int_{B(0,R)} u(t,x)\ dx = \int_{B(0,R)} u_0(x)\ dx. \label{x10}
\end{equation}
In particular, since $u_0\not\equiv 0$, there is $R_0>0$ large enough such that
\begin{equation*}
	\int_{B(0,R_0)} u_0(x)\ dx > 0
\end{equation*}
and we infer from~\eqref{x10} that there is $T_0>0$ such that
\begin{equation*}
	\int_{B(0,R_0)} u(t,x) \ge \frac{1}{2} \int_{B(0,R_0)} u_0(x)\ dx, \quad t\in (0,T_0).
\end{equation*}
In other words, $u(t)\not\equiv 0$ for $t\in (0,T_0)$, a property which extends to all positive times due to~\eqref{tmu}.
\end{proof}

We next adapt an argument from \cite[Proposition~3.1]{CdPE98} to derive a refined $L^\infty$-estimate on $u$, which improves~\eqref{wp0}.
	
\begin{lemma}\label{lem.lub}
Let $m>1$, $\sigma>0$ and $u_0\in L^{\infty}_{+}(\mathbb{R}^N)$. Then the corresponding non-negative weak solution $u$ to the Cauchy problem~\eqref{CP} satisfies
\begin{equation*}
	\|u(t)\|_\infty^{N(m-1)+2(m+1)} \le C_1 \|u(t)\|_{m+1}^{2(m+1)} t^{-N}\,, \qquad t\ge 0,
\end{equation*}
for some constant $C_1>0$ depending only on $N$ and $m$.
\end{lemma}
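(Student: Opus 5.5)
The plan is to adapt \cite[Proposition~3.1]{CdPE98}, whose argument combines the time monotonicity from Lemma~\ref{lem.bc} with a local elliptic-type estimate. Fix $t>0$ and $x_0\in\mathbb{R}^N$. By the B\'enilan--Crandall estimate~\eqref{bcest}, $\partial_t u\ge -u/((m-1)t)$. Equation~\eqref{eq1} then gives, in the sense of distributions on $(0,\infty)\times\mathbb{R}^N$,
\begin{equation*}
	-\Delta u^m + |x|^\sigma u^m = -\partial_t u \le \frac{u}{(m-1)t}.
\end{equation*}
Since the absorption term is non-negative, it follows that $-\Delta u^m(t)\le u(t)/((m-1)t)$ for a.e.\ $t$. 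That is, $w:=u^m(t)$ is a non-negative subsolution of $-\Delta w\le \frac{1}{(m-1)t} w^{1/m}$. The weight $|x|^\sigma$ has thus disappeared, which explains why $C_1$ depends only on $N$ and $m$. For a.e.\ $t$ we will work at fixed time; the extension to every $t$ then uses weak continuity and the lower semicontinuity of the norms.

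Next I derive a local $L^\infty$ bound for $w$ on a ball $B(x_0,R)$. For a subsolution of $-\Delta w\le a\,w^{1/m}$ with $a=1/((m-1)t)$, I intend to use a Moser iteration or the mean value inequality. Write $w^{1/m}\le \|w\|_\infty^{1/m-1}w$, or more simply split $w = w_1 + w_2$, where $w_1$ is harmonic-dominated and $w_2$ solves $-\Delta w_2 = a u$ in the ball with zero boundary data. The subharmonic part gives
\begin{equation*}
	w(x_0)\le \frac{C}{R^N}\int_{B(x_0,R)} w\,dx + C a R^2 \|u(t)\|_\infty,
\end{equation*}
which is the mean value inequality for $-\Delta(w + a\|u\|_\infty|x-x_0|^2/(2N))\le 0$. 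By H\"older's inequality,
\begin{equation*}
	\frac{1}{R^N}\int_{B(x_0,R)} u^m \le C R^{-N/(m+1)}\|u\|_{\infty}^{m-1}\Big(R^{-N}\int u^{m+1}\Big)^{\dots}.
\end{equation*}
Interpolating in the cleanest way, I will bound $\int_B u^m\le |B|^{1/(m+1)}\|u\|_{m+1}^m$, so that
\begin{equation*}
	u^m(t,x_0)\le C R^{-Nm/(m+1)}\|u(t)\|_{m+1}^m + \frac{C R^2}{t}\|u(t)\|_\infty .
\end{equation*}

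Taking the supremum over $x_0$ gives, with $M=\|u(t)\|_\infty$ and $L=\|u(t)\|_{m+1}$,
\begin{equation*}
	M^m\le C R^{-Nm/(m+1)} L^m + C R^2 M/t .
\end{equation*}
I then optimize in $R$ by choosing $R^2 = M^{m-1} t/(2C)$, so that the second term is absorbed into the left side. This yields $M^m\le C L^m (M^{m-1}t)^{-Nm/(2(m+1))}$. Raising both sides to the power $2(m+1)/m$ gives $M^{2(m+1)+N(m-1)}\le C L^{2(m+1)} t^{-N}$, which is exactly the claimed exponent.

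The main obstacle I expect is justifying the mean value inequality for the merely weak subsolution $u^m(t)\in H^1_{loc}$ at fixed time, and turning the distributional estimate~\eqref{bcest} into the elliptic inequality at a.e.\ $t$. I would handle this by testing with non-negative functions $\zeta$ of the form $\eta(s)\psi(x)$ and letting $\eta$ concentrate at $t$. An alternative is to run the argument on smooth approximate solutions (regularized, non-degenerate problems on balls) where~\eqref{bcest} holds classically, and then pass to the limit.
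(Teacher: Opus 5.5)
Your proof is correct and is essentially the paper's. You use \eqref{bcest} and drop the absorption to get $\Delta u^m(t)\ge -\|u(t)\|_\infty/((m-1)t)$, make $u^m(t)+\|u(t)\|_\infty|x-x_0|^2/(2N(m-1)t)$ subharmonic, and combine the mean value inequality with H\"older to reach $u^m(t,x_0)\le C R^{-Nm/(m+1)}\|u(t)\|_{m+1}^m + C R^2\|u(t)\|_\infty/t$, exactly as in the paper; the detours through Moser iteration and the splitting $w=w_1+w_2$ are not needed.

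The only differences are minor. You choose $R$ to absorb the second term into $\|u(t)\|_\infty^m$, whereas the paper balances the two terms before taking the supremum in $x_0$; both give the same exponents. For your passage from a.e.\ $t$ to every $t$, lower semicontinuity only handles the left-hand side. You should approximate $t$ from below by $t_k$ and use \eqref{tmu} to get $\|u(t_k)\|_{m+1}\le (t/t_k)^{1/(m-1)}\|u(t)\|_{m+1}$; the paper does not discuss this point at all.
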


\begin{proof}
Let $t>0$ and $x_0\in\mathbb{R}^N$. We infer from Eq.~\eqref{eq1} and~\eqref{bcest} that
\begin{equation*}
	\Delta u^m(t,x) = \partial_t u(t,x) + |x|^\sigma u^m(t,x) \ge - \frac{u(t,x)}{(m-1)t} \ge - \frac{\|u(t)\|_\infty}{(m-1)t}, \quad x\in\mathbb{R}^N,
\end{equation*}
from which we deduce that
\begin{equation*}
	\Delta \left[ u^m(t,x) + \frac{\|u(t)\|_\infty}{(m-1)t} \frac{|x-x_0|^2}{2N} \right] \ge 0, \quad x\in \mathbb{R}^N.
\end{equation*}
Therefore, $x\mapsto u^m(t,x) + \|u(t)\|_\infty |x-x_0|^2/(2N(m-1)t)$ is a subharmonic function in $\mathbb{R}^N$ and the mean-value theorem entails that, for $R>0$,
\begin{equation*}
	u^m(t,x_0) \le \frac{1}{\varpi_N R^N} \int_{B(x_0,R)} \left[ u^m(t,x) + \frac{\|u(t)\|_\infty}{(m-1)t} \frac{|x-x_0|^2}{2N} \right]\ dx,
\end{equation*}
where $\varpi_N$ denotes the volume of the unit ball in $\mathbb{R}^N$, see \cite[Theorem~2.1]{GT01}. We then infer from H\"older's inequality that
\begin{align*}
	u^m(t,x_0) & \le \big(\varpi_N R^N\big)^{-m/(m+1)} \left( \int_{B(0,R)} u^{m+1}(t,x)\ dx \right)^{m/(m+1)} \\
	& \quad + \frac{\|u(t)\|_\infty}{2(m-1)R^N t} \int_0^R r^{N+1}\ dr \\
	& \le \big(\varpi_N R^N\big)^{-m/(m+1)} \|u(t)\|_{m+1}^m + \frac{\|u(t)\|_\infty}{(m-1)t} R^2.
\end{align*}
We optimize with respect to $R$ by choosing
\begin{equation*}
	R=R(t) = \left[ \frac{t\|u(t)\|_{m+1}^m}{\|u(t)\|_\infty} \right]^{(m+1)/[mN+2(m+1)]}
\end{equation*}
in the above inequality to obtain
\begin{equation*}
	u^m(t,x_0) \le \left[ C_1^m \frac{\|u(t)\|_{m+1}^{2m(m+1)} \|u(t)\|_\infty^{Nm}}{t^{Nm}} \right]^{1/[Nm+2(m+1)]},
\end{equation*}
with
\begin{equation*}
	C_1 := \left( \frac{1}{\varpi_N^{m/(m+1)}} + \frac{1}{m-1} \right)^{[Nm+2(m+1)]/m}.
\end{equation*}
The above inequality being valid for all $x_0\in\mathbb{R}^N$, we further obtain
\begin{equation*}
	\|u(t)\|_\infty^m \le \left[ C_1^m \frac{\|u(t)\|_{m+1}^{2m(m+1)} \|u(t)\|_\infty^{Nm}}{t^{Nm}} \right]^{1/[Nm+2(m+1)]},
\end{equation*}
whence
\begin{equation*}
	\|u(t)\|_\infty^{N(m-1)+2(m+1)} \le C_1 \|u(t)\|_{m+1}^{2(m+1)} t^{-N},
\end{equation*}
as claimed.
\end{proof}
The results in Lemmas ~\ref{lem.bc} and~\ref{lem.lub} readily translate to the corresponding solution $v$ of Eq.~\eqref{eq2} by recalling that $v(0,x)=u_0(x)$ for $x\in\mathbb{R}^N$ and by applying~\eqref{resc.var}. We gather them in the next proposition for later use.

\begin{proposition}\label{prop.v}
Let $m>1$, $\sigma>0$ and $u_0\in L_+^\infty(\mathbb{R}^N)$. The corresponding solution $v$ to the Cauchy problem~\eqref{CP2} satisfies
\begin{equation}
	\left( \frac{e^{(m-1)s} - 1}{e^{(m-1)s}} \right)^{1/(m-1)} v(s,x) \ge \left( \frac{e^{(m-1)s_0} - 1}{e^{(m-1)s_0}} \right)^{1/(m-1)} v(s_0,x), \quad x\in\mathbb{R}^N \label{x11}
\end{equation}
for $s>s_0>0$ and
\begin{equation}
	\|v(s)\|_\infty^{N(m-1)+2(m+1)} \le C_1 (m-1)^N \left( \frac{e^{(m-1)s}}{e^{(m-1)s}-1} \right)^N \|v(s)\|_{m+1}^{2(m+1)}, \quad s\ge 0. \label{x12}
\end{equation}
Furthermore, if $u_0\not\equiv 0$, then $v(s)\not\equiv 0$ for any $s>0$.
\end{proposition}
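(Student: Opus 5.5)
The plan is to substitute the change of variables~\eqref{resc.var} into Lemma~\ref{lem.bc} and Lemma~\ref{lem.lub}, keeping careful track of the time factors. Throughout, $t=(e^{(m-1)s}-1)/(m-1)$, so that $1+(m-1)t=e^{(m-1)s}$ and $u(t,x)=e^{-s}v(s,x)$. We also use that~\eqref{resc.var} establishes a one-to-one correspondence between weak solutions of~\eqref{CP} and of~\eqref{CP2} with the same initial datum $u_0$. Hence $v$ is exactly the rescaling of the solution $u$ provided by Proposition~\ref{prop.wp}.

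For~\eqref{x11}, take $s>s_0>0$ with corresponding times $t>t_0>0$, and apply~\eqref{tmu}. Write
\begin{equation*}
	t^{1/(m-1)}u(t,x)=\left(\frac{e^{(m-1)s}-1}{m-1}\right)^{1/(m-1)} e^{-s}v(s,x) = (m-1)^{-1/(m-1)}\left(\frac{e^{(m-1)s}-1}{e^{(m-1)s}}\right)^{1/(m-1)} v(s,x),
\end{equation*}
where we used $e^{-s}=(e^{(m-1)s})^{-1/(m-1)}$. The same formula holds at $s_0$. The common factor $(m-1)^{-1/(m-1)}$ cancels, and~\eqref{x11} follows directly.

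For~\eqref{x12}, insert $u(t)=e^{-s}v(s)$ into Lemma~\ref{lem.lub}. Set $a:=N(m-1)+2(m+1)$. The left-hand side becomes $e^{-as}\|v(s)\|_\infty^{a}$ and the right-hand side becomes
\begin{equation*}
	C_1 e^{-2(m+1)s}\|v(s)\|_{m+1}^{2(m+1)}\,(m-1)^N\big(e^{(m-1)s}-1\big)^{-N}.
\end{equation*}
Multiply both sides by $e^{as}$. Since $a-2(m+1)=N(m-1)$, the remaining exponential factor is $e^{N(m-1)s}(e^{(m-1)s}-1)^{-N}=\big(e^{(m-1)s}/(e^{(m-1)s}-1)\big)^N$, which is exactly~\eqref{x12}.

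For $s=0$ the right-hand side is $+\infty$, so the inequality is trivially true there (as in Lemma~\ref{lem.lub} at $t=0$). Finally, the non-vanishing statement follows from the last assertion of Lemma~\ref{lem.bc}, because $v(s)=e^{s}u(t)$ with $t>0$ whenever $s>0$. The argument is purely computational; the only point requiring care is the exponent bookkeeping in~\eqref{x12}.
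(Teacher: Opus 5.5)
Your proposal is correct and matches the paper's argument: the paper simply says that Lemmas~\ref{lem.bc} and~\ref{lem.lub} carry over to $v$ through the change of variables~\eqref{resc.var}. You carry out that substitution explicitly, and your computations are right, including the exponent identity $a-2(m+1)=N(m-1)$ and the cancellation of $(m-1)^{-1/(m-1)}$.
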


\subsection{A sharp Caffarelli-Kohn-Nirenberg inequality}\label{subsec.CKN}

Another fundamental tool in the forthcoming proofs consists in the celebrated CKN inequalities. We first recall below, for the reader's convenience, the general form and conditions of the CKN inequalities. Let $(q_1, q_2, q_3, \gamma_1, \gamma_2, \gamma_3, a)\in\mathbb{R}^7$ be such that
\begin{subequations}\label{CKN.cond}
\begin{equation}\label{CKN.cond1}
q_1>0, \quad q_2\geq1, \quad q_3>0, \quad a\in[0,1],
\end{equation}
\begin{equation}\label{CKN.cond2}
\frac{1}{q_1}+\frac{\gamma_1}{N}>0, \quad \frac{1}{q_2}+\frac{\gamma_2}{N}>0, \quad \frac{1}{q_3}+\frac{\gamma_3}{N}>0,
\end{equation}
\begin{equation}\label{CKN.cond3}
\frac{1}{q_1}+\frac{\gamma_1}{N}=a\left(\frac{1}{q_2}+\frac{\gamma_2-1}{N}\right)+(1-a)\left(\frac{1}{q_3}+\frac{\gamma_3}{N}\right),
\end{equation}
\begin{equation}\label{CKN.cond4}
\gamma_1\leq a\gamma_2+(1-a)\gamma_3,
\end{equation}
\begin{equation}\label{CKN.cond5}
\frac{1}{q_1}\leq\frac{a}{q_2}+\frac{1-a}{q_3} \quad {\rm if} \ a=0 \ {\rm or} \ a=1.
\end{equation}
\end{subequations}
In these conditions and notation, we have the following interpolation inequalities,see \cite[Theorem]{CKN1984} and \cite[Theorem~1.2]{LW23}.

\begin{theorem}[CKN inequalities]\label{th.CKN}
Let $(q_1,q_2,q_3,\gamma_1,\gamma_2,\gamma_3,a)\in\mathbb{R}^7$ such that~\eqref{CKN.cond1} and \eqref{CKN.cond2} are satisfied. Then, there exists $C>0$ such that
\begin{equation}\label{CKN}
\||x|^{\gamma_1} z\|_{q_1}\leq C\||x|^{\gamma_2}\nabla z\|_{q_2}^a\||x|^{\gamma_3}z\|_{q_3}^{1-a}
\end{equation}
holds true for all $z\in C_c^1(\mathbb{R}^N)$ if and only if~\eqref{CKN.cond3}, \eqref{CKN.cond4} and~\eqref{CKN.cond5} are satisfied. Moreover, in any compact set in the space of parameters satisfying~\eqref{CKN.cond1} and~\eqref{CKN.cond2}, the constant $C$ is bounded.
\end{theorem}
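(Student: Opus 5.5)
The statement just above is the classical Caffarelli--Kohn--Nirenberg theorem. I do not plan to reprove it from scratch; I will quote it from \cite[Theorem]{CKN1984}, with the extension to the full parameter range and the uniformity of the constant taken from \cite[Theorem~1.2]{LW23}. Still, let me record how the argument is organised, since the same structure tells us which parameter choices are admissible later in the paper.

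\emph{Necessity.} I would test \eqref{CKN} on dilations $z_\lambda(x)=z(\lambda x)$. Each norm scales as a power of $\lambda$, and requiring both sides to carry the same power, for all $\lambda>0$, forces exactly the balance condition \eqref{CKN.cond3}. Next I would test on translates $z(\cdot-x_0)$ with $|x_0|\to\infty$. Far from the origin the weights behave like the constants $|x_0|^{\gamma_i}$, so comparing powers of $|x_0|$ yields \eqref{CKN.cond4}. The endpoint condition \eqref{CKN.cond5} should come from an unweighted Sobolev-type obstruction, obtained with concentrating bumps or functions with separated supports.

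\emph{Sufficiency.} I would first decompose $\mathbb{R}^N$ into dyadic annuli $A_k=\{2^k\le|x|<2^{k+1}\}$. On each annulus the weights are comparable to the constants $2^{k\gamma_i}$. There I would apply the unweighted Gagliardo--Nirenberg inequality, including a lower-order term, and rescale to $A_0$. I would then sum over $k$, using the convexity inequality $\sum a_k^\theta b_k^{1-\theta}\le(\sum a_k)^\theta(\sum b_k)^{1-\theta}$. The summation closes under the strict form of \eqref{CKN.cond4}, together with \eqref{CKN.cond2}, which guarantees integrability of the weights near the origin. The equality case of \eqref{CKN.cond4} is the delicate step, because the lower-order terms no longer decay geometrically across annuli. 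There I would use a weighted Hardy inequality $\||x|^{\gamma-1}z\|_q\le C\||x|^{\gamma}\nabla z\|_q$, valid under \eqref{CKN.cond2}, to absorb those terms.

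\emph{Uniformity of the constant.} The bound on $C$ over compact parameter sets follows from tracking the constants through the steps above. The Gagliardo--Nirenberg and Hardy constants depend continuously on the exponents away from the degenerate boundaries excluded by \eqref{CKN.cond1}--\eqref{CKN.cond2}, and the dyadic geometric series have ratios bounded away from $1$ on compact sets.

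For the paper's purposes, the only real work will be checking \eqref{CKN.cond1}--\eqref{CKN.cond5} for the specific parameters used later. The natural choice is $q_2=q_3=2$, $\gamma_2=0$, $\gamma_3=\sigma/2$ and $q_1=(m+1)/m$, $\gamma_1=0$, which bounds $\|z\|_{(m+1)/m}$ by $\|\nabla z\|_2$ and $N_\sigma(z)$. I expect the balance relation \eqref{CKN.cond3} to produce an $a\in(0,1)$ precisely when $\sigma>\sigma_0$.
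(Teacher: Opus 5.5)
Quoting the result from the literature is exactly what the paper does, since it gives no proof of Theorem~\ref{th.CKN}. Your final check is also correct: for $(q_1,q_2,q_3,\gamma_1,\gamma_2,\gamma_3)=(\frac{m+1}{m},2,2,0,0,\frac{\sigma}{2})$, \eqref{CKN.cond3} gives $a\in(0,1)$ precisely when $\sigma>\sigma_0$.

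\textbf{The sufficiency claim is false in an endpoint case.} You claim the dyadic summation closes under strict inequality in \eqref{CKN.cond4} together with \eqref{CKN.cond2}. This fails in the case $\frac{1}{q_1}+\frac{\gamma_1}{N}=\frac{1}{q_2}+\frac{\gamma_2-1}{N}=\frac{1}{q_3}+\frac{\gamma_3}{N}$ with $0<a<1$. There each annular piece is exactly scale-balanced and no geometric factor appears, so summing over annuli requires $\frac{1}{q_1}\le\frac{a}{q_2}+\frac{1-a}{q_3}$.

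Concretely, take $N=1$, $a=\frac12$, $(q_1,q_2,q_3)=(1,2,2)$ and $(\gamma_1,\gamma_2,\gamma_3)=(-\frac12,1,0)$. These satisfy \eqref{CKN.cond1}--\eqref{CKN.cond4}, with $-\frac12<\frac12$ strict in \eqref{CKN.cond4}, and \eqref{CKN.cond5} is void. Let $z_K(x)=\sum_{j=1}^{K}2^{-j/2}\phi(2^{-j}x)$ with $0\le\phi\in C_c^1((1,2))$ and $\phi\not\equiv0$. Then the left-hand side of \eqref{CKN} equals $K\||x|^{-1/2}\phi\|_1$, while the right-hand side equals $C\sqrt{K}\,\|x\phi'\|_2^{1/2}\|\phi\|_2^{1/2}$.

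So the ``if'' direction of Theorem~\ref{th.CKN} as displayed here is false. The original Caffarelli--Kohn--Nirenberg theorem has an extra condition in exactly this endpoint case. In the present notation, it amounts to imposing the inequality of \eqref{CKN.cond5} there as well. Your own separated-supports test from the necessity part is what detects it. Had you run it here, you would have caught the gap in the statement rather than ``proving'' it.

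\textbf{The Hardy inequality you invoke needs a stronger hypothesis.} The inequality $\||x|^{\gamma-1}z\|_q\le C\||x|^{\gamma}\nabla z\|_q$ on $C_c^1(\mathbb{R}^N)$ is not valid under \eqref{CKN.cond2} alone. It requires $\frac1q+\frac{\gamma-1}{N}>0$, since otherwise the left-hand side is infinite whenever $z(0)\ne0$. It fails, for example, for $q=2$, $\gamma=0$ and $N\in\{1,2\}$. This covers Ladyzhenskaya's inequality $\|z\|_4\le C\|\nabla z\|_2^{1/2}\|z\|_2^{1/2}$ in $\mathbb{R}^2$, which is an instance of equality in \eqref{CKN.cond4}. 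So your treatment of the equality case does not work as described.

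None of this affects the paper's application. In Proposition~\ref{prop.CKN} one has $s<2^*$, i.e.\ $\frac1s>\frac12-\frac1N$, so the endpoint case never occurs, $0<a<1$, and the cited theorem applies. The statement of Theorem~\ref{th.CKN} itself, however, should include the endpoint alternative in \eqref{CKN.cond5}.
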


We next particularize the previous general CKN inequalities to our case of interest.

\begin{proposition}\label{prop.CKN}
Let $m$ and $\sigma$ be as in \eqref{range.exp} and consider $s>0$ satisfying
\begin{equation}\label{lim.CKN}
\frac{m+1}{m} \leq s<2^*:=\begin{cases}
                           \displaystyle{\frac{2N}{N-2}} & \text{if } N\geq3 \\
                           \infty & \text{if } N\in\{1,2\}.
                         \end{cases}
\end{equation}
Then $\dot{H}^1(\mathbb{R}^N)\cap L^2_{\sigma}(\mathbb{R}^N)$ is continuously embedded in $L^s(\mathbb{R}^d)$ and the following CKN inequality
\begin{equation}\label{opt.CKN}
\|z\|_s\leq \kappa_s\|\nabla z\|_{2}^{a}N_{\sigma}^{1-a}(z), \quad a:=\frac{s\sigma+N(s-2)}{(\sigma+2)s}
\end{equation}
holds true for any $z\in\dot{H}^1(\mathbb{R}^N)\cap L^2_{\sigma}(\mathbb{R}^N)$ for some constant $\kappa_s>0$ depending only on $N$, $m$, $\sigma$ and $s$.
\end{proposition}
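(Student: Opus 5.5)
The plan is to obtain \eqref{opt.CKN} as a direct instance of Theorem~\ref{th.CKN}, first for $z\in C_c^1(\mathbb{R}^N)$, and then to extend it to $\dot{H}^1(\mathbb{R}^N)\cap L^2_{\sigma}(\mathbb{R}^N)$ by density. The choice of parameters is
\begin{equation*}
q_1=s,\quad \gamma_1=0,\quad q_2=2,\quad \gamma_2=0,\quad q_3=2,\quad \gamma_3=\frac{\sigma}{2},
\end{equation*}
so that $\||x|^{\gamma_3}z\|_{q_3}=N_\sigma(z)$.

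\textbf{Checking the hypotheses of Theorem~\ref{th.CKN}.}
\begin{itemize}
\item Condition \eqref{CKN.cond2} is immediate, since $1/s$, $1/2$ and $1/2+\sigma/(2N)$ are all positive.
\item Condition \eqref{CKN.cond3} reads
\begin{equation*}
\frac{1}{s}=\frac{N+\sigma}{2N}-a\,\frac{2+\sigma}{2N}.
\end{equation*}
Solving for $a$ gives exactly $a=\frac{N+\sigma-2N/s}{2+\sigma}=\frac{s\sigma+N(s-2)}{(\sigma+2)s}$, which is the value in \eqref{opt.CKN}.
\item Next we check $a\in(0,1)$. Positivity $a>0$ is equivalent to $s>2N/(N+\sigma)$. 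Since $\sigma>\sigma_0$,
\begin{equation*}
\frac{2N}{N+\sigma}<\frac{2N}{N+\sigma_0}=\frac{2N(m+1)}{N(m+1)+N(m-1)}=\frac{m+1}{m}\le s.
\end{equation*}
This is precisely where the assumption $\sigma>\sigma_0$ is used.
\item The bound $a<1$ is equivalent to $N-2<2N/s$. This holds because $s<2^*$, and it is trivial when $N\in\{1,2\}$.
\item Condition \eqref{CKN.cond4} becomes $0\le(1-a)\sigma/2$, which holds.
\item Condition \eqref{CKN.cond5} is void since $a\in(0,1)$.
\end{itemize}
Hence \eqref{opt.CKN} holds on $C_c^1(\mathbb{R}^N)$ with a constant $\kappa_s$ depending only on $N$, $\sigma$ and $s$ (and on $m$ through the admissible range of $s$).

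\textbf{Extension by density.} This is the only step requiring some care. I would show that $C_c^\infty(\mathbb{R}^N)$ is dense in $\dot{H}^1\cap L^2_\sigma$ for the norm $\|\nabla z\|_2+N_\sigma(z)$, in two stages.
\begin{itemize}
\item \emph{Truncation.} Let $\chi\in C_c^\infty$ equal $1$ on $B(0,1)$ with support in $B(0,2)$, and set $\chi_R=\chi(\cdot/R)$. The only nontrivial error term is $\|z\nabla\chi_R\|_2$. It is bounded by
\begin{equation*}
\frac{C}{R}\Big(\int_{R<|x|<2R}z^2\Big)^{1/2}\le C R^{-1-\sigma/2}\Big(\int_{|x|>R}|x|^\sigma z^2\Big)^{1/2},
\end{equation*}
which tends to $0$ as $R\to\infty$. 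The weighted term $N_\sigma\big((1-\chi_R)z\big)$ also tends to $0$ by dominated convergence.
\item \emph{Mollification.} Compactly supported elements are then mollified. On a fixed compact set the weight $|x|^\sigma$ is bounded, so convergence in $H^1$ gives convergence in both norms.
\end{itemize}

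\textbf{Conclusion.} Given $z$ in the intersection, take approximants $z_k\in C_c^\infty$ converging to $z$ in $\dot{H}^1\cap L^2_\sigma$. Up to a subsequence, $z_k\to z$ a.e., because $L^2_\sigma$ convergence implies $L^2_{\rm loc}$ convergence away from the origin. Applying \eqref{opt.CKN} to $z_k-z_l$ shows that $(z_k)$ is Cauchy in $L^s$, and Fatou's lemma then yields \eqref{opt.CKN} for $z$. Finally, Young's inequality $\|\nabla z\|_2^aN_\sigma^{1-a}(z)\le\|\nabla z\|_2+N_\sigma(z)$ gives the continuous embedding into $L^s(\mathbb{R}^N)$.
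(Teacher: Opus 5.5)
Your proposal is correct and follows the paper's proof: the same choice $(q_1,q_2,q_3,\gamma_1,\gamma_2,\gamma_3)=(s,2,2,0,0,\sigma/2)$ in Theorem~\ref{th.CKN}, the same computation of $a$ from \eqref{CKN.cond3}, and the same verification that $a>0$ (via $\sigma>\sigma_0$ and $s\ge(m+1)/m$) and $a<1$ (via $s<2^*$). Your density argument extending the inequality from $C_c^1(\mathbb{R}^N)$ to $\dot{H}^1(\mathbb{R}^N)\cap L^2_\sigma(\mathbb{R}^N)$, and your Young-inequality derivation of the continuous embedding, are details the paper leaves implicit, and you handle them correctly.
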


\begin{proof}
We may apply Theorem~\ref{th.CKN} with
\begin{equation*}
(q_1,q_2,q_3,\gamma_1,\gamma_2,\gamma_3)=\left(s,2,2,0,0,\frac{\sigma}{2}\right).
\end{equation*}
Indeed, with this choice of parameters, the condition~\eqref{CKN.cond2} is obviously satisfied, while~\eqref{CKN.cond3} reads
\begin{equation*}
\begin{split}
	\frac{1}{s}&=a\left(\frac{1}{2}-\frac{1}{N}\right)+(1-a)\left(\frac{1}{2}+\frac{\sigma}{2N}\right)\\
	&=\frac{N+\sigma}{2N}-\frac{a(\sigma+2)}{2N},
\end{split}
\end{equation*}
which readily gives
\begin{equation*}
	a=\frac{\sigma s+N(s-2)}{(\sigma+2)s}.
\end{equation*}
In order to fulfill the condition~\eqref{CKN.cond1}, we are left to check that $a\in[0,1]$, as the other positivity conditions are obvious. On the one hand, we observe that
\begin{equation*}
	1-a=\frac{2N-(N-2)s}{s(\sigma+2)}>0,
\end{equation*}
since $s<2^*$. On the other hand, taking into account that $\sigma>\sigma_0$, we deduce from~\eqref{lim.CKN} that
\begin{equation*}
\begin{split}
	\sigma s + N(s-2) & > \frac{N(m-1)s}{m+1}+N(s-2) = \frac{N}{m+1}[2ms-2(m+1)] \\
	& = \frac{2mN}{m+1}\left(s-\frac{m+1}{m}\right) \geq 0,
\end{split}
\end{equation*}
proving thus that $a>0$. Thus, the condition~\eqref{CKN.cond1} is satisfied and then the choice of the exponents $\gamma_1=\gamma_2=0$ obviously ensures the condition~\eqref{CKN.cond4}. We are thus in a position to complete the proof by applying~\eqref{CKN} and observing that
\begin{equation*}
\||x|^{\gamma_3}z\|_{q_3}=\left(\int_{\mathbb{R}^N}|x|^{\sigma}|z(x)|^2\,dx\right)^{1/2}=N_{\sigma}(z),
\end{equation*}
which leads to \eqref{opt.CKN}.
\end{proof}

The following immediate consequence of the inequality~\eqref{opt.CKN} will be useful later.

\begin{corollary}\label{cor.CKN}
Let $m$ and $\sigma$ be as in \eqref{range.exp} and consider $s>0$ satisfying~\eqref{lim.CKN}. For $z\in \dot{H}^1(\mathbb{R}^N)\cap L^2_{\sigma}(\mathbb{R}^N)$, we have
\begin{equation}\label{CKN-Young}
	\|z\|_s^{2}\leq\kappa_s^2 \big[\|\nabla z\|_2^2+N_{\sigma}^2(z)\big].
\end{equation}
\end{corollary}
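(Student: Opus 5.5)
We derive \eqref{CKN-Young} directly from the CKN inequality \eqref{opt.CKN} of Proposition~\ref{prop.CKN}, combined with an elementary interpolation bound for the product of powers. Fix $s$ satisfying \eqref{lim.CKN} and $z\in\dot{H}^1(\mathbb{R}^N)\cap L^2_\sigma(\mathbb{R}^N)$. Squaring \eqref{opt.CKN} gives
\begin{equation*}
	\|z\|_s^2 \le \kappa_s^2 \big(\|\nabla z\|_2^2\big)^{a} \big(N_\sigma^2(z)\big)^{1-a},
\end{equation*}
where $a\in(0,1)$. The proof of Proposition~\ref{prop.CKN} shows that $a>0$ and $1-a>0$ under \eqref{range.exp} and \eqref{lim.CKN}.

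We then apply the weighted arithmetic--geometric mean inequality (Young's inequality with exponents $1/a$ and $1/(1-a)$) to the nonnegative numbers $A:=\|\nabla z\|_2^2$ and $B:=N_\sigma^2(z)$:
\begin{equation*}
	A^aB^{1-a}\le aA+(1-a)B\le \max\{a,1-a\}(A+B)\le A+B.
\end{equation*}
The concavity of the logarithm gives this inequality directly. The degenerate cases $A=0$ or $B=0$ are trivial, since then the left-hand side vanishes.

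Inserting this bound into the squared CKN inequality yields $\|z\|_s^2\le\kappa_s^2\big[\|\nabla z\|_2^2+N_\sigma^2(z)\big]$, which is \eqref{CKN-Young}. There is no genuine obstacle here. The only point needing care is that \eqref{opt.CKN} holds for every $z$ in $\dot{H}^1\cap L^2_\sigma$ and not only on $C_c^1$. Proposition~\ref{prop.CKN} already asserts this, and it rests on density of $C_c^\infty$ in that intersection together with Fatou's lemma for the $L^s$ norm.
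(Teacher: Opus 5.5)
Your proposal is correct and follows essentially the same route as the paper: square \eqref{opt.CKN}, apply Young's inequality $A^aB^{1-a}\le aA+(1-a)B$ with $a\in(0,1)$, and bound $\max\{a,1-a\}$ by $1$. Your extra remarks on the degenerate cases $A=0$ or $B=0$ and on extending the inequality from $C_c^1$ to all of $\dot{H}^1(\mathbb{R}^N)\cap L^2_\sigma(\mathbb{R}^N)$ are harmless additions. The paper simply takes the latter as part of Proposition~\ref{prop.CKN}.
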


\begin{proof}
We infer from~\eqref{opt.CKN} and Young's inequality that
\begin{equation*}
	\|z\|_s^2\leq\kappa_s^2\|\nabla z\|_2^{2a}N_{\sigma}^{2(1-a)}(z)\leq\kappa_s^2 \left[a\|\nabla z\|_2^2+(1-a)N_{\sigma}^2(z)\right],
\end{equation*}
and the inequality \eqref{CKN-Young} follows from the fact that $a\in(0,1)$.
\end{proof}

\subsection{A Lyapunov functional}\label{subsec.Lyap}

For any function $z\in\mathcal{X}$, we introduce the functional
\begin{equation*}
	\mathcal{I}(z):=\frac{1}{2}\big(\|\nabla (|z|^{m-1}z)\|_2^2+N_{\sigma}^2(|z|^m)\big)-\frac{m}{m+1}\|z\|_{m+1}^{m+1}.
\end{equation*}
The first result establishes the boundedness from below of the functional $\mathcal{I}$ and strongly relies on the CKN inequalities derived in Proposition \ref{prop.CKN}.

\begin{lemma}\label{lem.lower}
Let $m$ and $\sigma$ be as in~\eqref{range.exp}. There exists $\mathcal{I}_0 > 0$ depending only on $N$, $m$ and $\sigma$ such that
\begin{equation*}
	\mathcal{I}(z)\geq \frac{1}{4}\big(\|\nabla (|z|^{m-1}z)\|_2^2 + N_{\sigma}^2(|z|^m)\big) - \mathcal{I}_0, \quad z\in\mathcal{X}.
\end{equation*}
\end{lemma}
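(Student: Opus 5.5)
Set $w:=|z|^{m-1}z$, so that $w\in\mathcal{Y}$ and
\begin{equation*}
\|z\|_{m+1}^{m+1}=\int_{\mathbb{R}^N}|w|^{(m+1)/m}\,dx=\|w\|_{(m+1)/m}^{(m+1)/m},
\end{equation*}
while $\|\nabla (|z|^{m-1}z)\|_2^2+N_\sigma^2(|z|^m)=\|\nabla w\|_2^2+N_\sigma^2(w)=:D(w)$. The claim then reduces to bounding $\frac{m}{m+1}\|w\|_{(m+1)/m}^{(m+1)/m}$ by $\frac14 D(w)+\mathcal{I}_0$. We control this negative term by Corollary~\ref{cor.CKN} with $s=(m+1)/m$, which is admissible in~\eqref{lim.CKN}: the lower bound holds with equality, and $(m+1)/m<2\le 2^*$ always, since $(m+1)/m<2$ for $m>1$ and, when $N\ge3$, $2^*>2$. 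This is exactly where $\sigma>\sigma_0$ enters, through Proposition~\ref{prop.CKN}. It gives
\begin{equation*}
\|w\|_{(m+1)/m}^{(m+1)/m}\le \kappa_s^{(m+1)/m}\, D(w)^{(m+1)/(2m)}.
\end{equation*}
(Applying the corollary to $w\in\dot H^1\cap L^2_\sigma$ is legitimate by its statement.)

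The key point is that the exponent $(m+1)/(2m)$ is strictly less than $1$ because $m>1$, so the right-hand side is sublinear in $D(w)$. Young's inequality with exponents $p=2m/(m+1)$ and $p'=2m/(m-1)$ then gives
\begin{equation*}
\frac{m}{m+1}\kappa_s^{(m+1)/m} D^{(m+1)/(2m)}\le \frac14 D+\mathcal{I}_0,
\end{equation*}
where $\mathcal{I}_0$ is an explicit constant depending only on $\kappa_s$ and $m$, hence only on $N$, $m$, $\sigma$. It is positive, and can in any case be enlarged if needed.

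Substituting this into the definition $\mathcal{I}(z)=\frac12 D(w)-\frac{m}{m+1}\|w\|_{(m+1)/m}^{(m+1)/m}$ yields $\mathcal{I}(z)\ge \frac14 D(w)-\mathcal{I}_0$, which is the claim.

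The only genuine point to check is the validity of the CKN step at the endpoint $s=(m+1)/m$. We need $a>0$ there, and this is guaranteed by the strict inequality $\sigma>\sigma_0$, as verified in the proof of Proposition~\ref{prop.CKN}. Everything else is routine.
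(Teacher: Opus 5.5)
Your proof is correct and matches the paper's argument: apply Corollary~\ref{cor.CKN} with $s=(m+1)/m$ to $|z|^{m-1}z$ (admissible precisely because $\sigma>\sigma_0$), then absorb the sublinear term by Young's inequality; the only cosmetic difference is that the paper applies Young in the variable $\|z\|_{m+1}$ (using $m+1<2m$) rather than in $D(w)$. Your constant bookkeeping is in fact cleaner than the paper's display, whose factor should read $1/(4\kappa_{(m+1)/m}^2)$ rather than $\kappa_{(m+1)/m}^2/4$ (a harmless slip).
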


\begin{proof}
Let $z\in\mathcal{X}$. Then $|z|^{m-1}z\in \dot{H}^1(\mathbb{R}^N)\cap L^2_{\sigma}(\mathbb{R}^N)$ and we infer from~\eqref{CKN-Young} with $|z|^{m-1}z$ instead of $z$ and $s=(m+1)/m$ that
\begin{equation}\label{interm1}
\begin{split}
	\frac{1}{4} \left( \|\nabla (|z|^{m-1}z)\|_{2}^2 + N_{\sigma}(|z|^m)^{2} \right) & \ge \frac{\kappa_{(m+1)/m}^2}{4} \||z|^m\|_{(m+1)/m}^2 \\
	& = \frac{\kappa_{(m+1)/m}^2}{4} \|z\|_{m+1}^{2m}.
\end{split}
\end{equation}
Since $m+1<2m$, Young's inequality entails that there is a positive constant $\mathcal{I}_0$ depending only on $N$, $m$ and $\sigma$ such that
\begin{equation}\label{interm2}
	\frac{m}{m+1} \|z\|_{m+1}^{m+1} \le \frac{\kappa_{(m+1)/m}^2}{4} \|z\|_{m+1}^{2m} + \mathcal{I}_0.
\end{equation}
Combining~\eqref{interm1} and~\eqref{interm2} completes the proof.
\end{proof}

The following result shows that $\mathcal{I}$ is a Lyapunov functional for~\eqref{CP2}, which allows us to prove the well-posedness of~\eqref{CP2} (and thus that of~\eqref{CP} as well) in the energy space $\mathcal{X}$, besides providing valuable information on the long term behavior of solutions to~\eqref{CP2}.

\begin{proposition}\label{prop.Lyap}
Let $m$ and $\sigma$ be as in \eqref{range.exp} and let $v$ be the solution to the Cauchy problem~\eqref{CP2} with initial condition $v_0\in L_+^\infty(\mathbb{R}^N)\cap \mathcal{X}$. Then $v(s)\in L_+^\infty(\mathbb{R}^N)\cap \mathcal{X}$ for all $s\ge 0$ and
\begin{equation}
	\mathcal{I}(v(s)) + \frac{4m}{(m+1)^2} \int_0^s \left\| \partial_s \big(v^{(m+1)/2}\big)(s_*) \right\|_2^2\, ds_* \le \mathcal{I}(u_0), \quad s\ge 0. \label{diss}
\end{equation}
Moreover, there is $C_2(u_0)>0$ depending only $N$, $m$, $\sigma$ and $u_0$ such that
\begin{equation}
	\big\|\nabla v^m(s)\big\|_2 + N_\sigma(v^m(s)) + \|v(s)\|_{m+1} + \|v(s)\|_\infty \le C_2(u_0), \quad s\ge \frac{\ln{m}}{m-1}. \label{ubv}
\end{equation}
\end{proposition}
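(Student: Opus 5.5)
The plan is to derive the energy identity formally for smooth positive solutions, and then pass to the general case by approximation.

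\emph{Energy identity.} Multiply Eq.~\eqref{eq2} by $\partial_s v^m$ and integrate over $\mathbb{R}^N$. Since $\partial_s v^m = m v^{m-1}\partial_s v$, the left-hand side becomes
\begin{equation*}
\int_{\mathbb{R}^N} m v^{m-1}(\partial_s v)^2\,dx = \frac{4m}{(m+1)^2}\big\|\partial_s\big(v^{(m+1)/2}\big)\big\|_2^2 .
\end{equation*}
On the right-hand side, integration by parts gives $-\frac12\frac{d}{ds}\big(\|\nabla v^m\|_2^2+N_\sigma^2(v^m)\big)$ for the first two terms. For the last term we use $\int v\,\partial_s v^m\,dx=\frac{m}{m+1}\frac{d}{ds}\|v\|_{m+1}^{m+1}$. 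Combining these yields
\begin{equation*}
\frac{d}{ds}\mathcal{I}(v(s))+\frac{4m}{(m+1)^2}\big\|\partial_s\big(v^{(m+1)/2}\big)(s)\big\|_2^2=0 .
\end{equation*}
Integrating in $s$ then gives~\eqref{diss} as an equality for smooth solutions.

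\emph{Approximation.} To make this rigorous, I would approximate $u_0$ by smooth, compactly supported, positive data $u_{0,\varepsilon}$, for instance $u_{0,\varepsilon}=\rho_\varepsilon*u_0\,\chi_{1/\varepsilon}+\varepsilon\psi$ with $\psi$ a small positive decaying function. These should be chosen so that $u_{0,\varepsilon}\to u_0$ in $L^{m+1}$, $u_{0,\varepsilon}^m\to u_0^m$ in $\dot H^1\cap L^2_\sigma$, and $\|u_{0,\varepsilon}\|_\infty\le\|u_0\|_\infty+1$. An alternative is to solve on the balls $B(0,R)$ with Dirichlet data and a regularized nondegenerate diffusion, where the identity is classical.

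For the approximate solutions $v_\varepsilon$, Lemma~\ref{lem.lower} combined with the equality form of~\eqref{diss} gives uniform bounds on $\|\nabla v_\varepsilon^m\|_2$, $N_\sigma(v_\varepsilon^m)$ and $\|v_\varepsilon\|_{m+1}$. It also gives a uniform bound on $\partial_s v_\varepsilon^{(m+1)/2}$ in $L^2((0,S)\times\mathbb{R}^N)$, and $L^\infty$ bounds come from Proposition~\ref{prop.wp}. Aubin--Lions-type compactness on bounded sets, together with the uniqueness of Proposition~\ref{prop.wp}, identifies the limit as $v$. Weak lower semicontinuity of the norms then yields~\eqref{diss} as an inequality. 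The step $\|v_\varepsilon(s)\|_{m+1}\to\|v(s)\|_{m+1}$ enters $\mathcal{I}$ with a minus sign, so it requires strong convergence. I would get this from local compactness plus tail control: $\int_{|x|>R}v^{m+1}\,dx$ is bounded, via H\"older, by $R^{-\sigma\theta}$ times powers of $N_\sigma(v^m)$ and $\|v\|_{m+1}$, and Lemma~\ref{lem.lub} gives uniform $L^\infty$ bounds. The same bounds show $v(s)\in\mathcal{X}$. I expect this passage to the limit, with the regularity needed to justify the multiplier $\partial_s v^m$ for a degenerate equation, to be the main technical obstacle. It is standard but delicate.

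\emph{Uniform bound~\eqref{ubv}.} From~\eqref{diss} and Lemma~\ref{lem.lower},
\begin{equation*}
\tfrac14\big(\|\nabla v^m(s)\|_2^2+N_\sigma^2(v^m(s))\big)\le\mathcal{I}(u_0)+\mathcal{I}_0 .
\end{equation*}
Corollary~\ref{cor.CKN} with $s=(m+1)/m$ then bounds $\|v(s)\|_{m+1}^{2m}=\|v^m(s)\|_{(m+1)/m}^2$ by the same quantity. Finally, \eqref{x12} controls $\|v(s)\|_\infty$ by $\|v(s)\|_{m+1}$ times $\big(e^{(m-1)s}/(e^{(m-1)s}-1)\big)^{N/[N(m-1)+2(m+1)]}$. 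For $s\ge\ln m/(m-1)$ we have $e^{(m-1)s}\ge m$, so this ratio is at most $m/(m-1)$. This yields $C_2(u_0)$ depending only on $N,m,\sigma$ and $\mathcal{I}(u_0)$.
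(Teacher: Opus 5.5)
Your proposal is correct and follows essentially the same route as the paper: you test \eqref{eq2} formally with $\partial_s v^m$, use an approximation argument that turns the identity into the inequality \eqref{diss}, and then obtain \eqref{ubv} from Lemma~\ref{lem.lower}, Corollary~\ref{cor.CKN} with $s=(m+1)/m$, and \eqref{x12} with $e^{(m-1)s}\ge m$. Your sketch of the approximation goes further than the paper, which only invokes classical arguments; if you flesh it out, regularize $u_0^m$ and then take the $m$-th root rather than mollifying $u_0$ itself, since $u_0^m\in\dot H^1(\mathbb{R}^N)$ gives no control on $\nabla u_0$ and so the $\dot H^1$-convergence of $(\rho_\varepsilon*u_0)^m$ to $u_0^m$ is not immediate.
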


\begin{proof}
We only provide a formal proof of~\eqref{diss}, classical approximation arguments being needed for a rigorous justification. We multiply Eq.~\eqref{eq2} by $\partial_sv^m$ and integrate over $\mathbb{R}^N$ to obtain
\begin{equation*}
\begin{split}
	0\leq\int_{\mathbb{R}^N} \partial_s v \partial_s v^m\,dx&=\int_{\mathbb{R}^N} \left[\Delta v^m \partial_s v^m - |x|^{\sigma} v^m \partial_s v^m + v \partial_s v^m\right]\,dx\\
	& = \int_{\mathbb{R}^N} \left[-\frac{1}{2} \partial_s|\nabla v^m|^2 - \frac{1}{2} |x|^{\sigma} \partial_s v^{2m} + \frac{m}{m+1} \partial_s v^{m+1}\right]\,dx\\
	&= - \frac{d}{ds} \left( \frac{\|\nabla v^m\|_2^2}{2} + \frac{N_{\sigma}^2(v^m)}{2} - \frac{m}{m+1} \|v\|_{m+1}^{m+1}\right)\\& = - \frac{d}{ds} \mathcal{I}(v),
\end{split}
\end{equation*}
from which~\eqref{diss} follows after integrating with respect to $s$, the above identity becoming an inequality in the approximation process. Now, for $s\ge 0$, we combine Lemma~\ref{lem.lower} and~\eqref{diss} to find that
\begin{equation*}
	\mathcal{I}(u_0) \ge \mathcal{I}(v(s)) \ge \frac{1}{4}\big(\|\nabla v^m(s)\|_2^2 + N_{\sigma}^2(v^m(s))\big) - \mathcal{I}_0,
\end{equation*}
which proves that $v^m(s)$ belongs to $\dot{H}^1(\mathbb{R}^N)\cap L_\sigma^2(\mathbb{R}^N)$ with
\begin{equation}
	\|\nabla v^m(s)\|_2^2 + N_{\sigma}^2(v^m(s)) \le 4 \left[ \mathcal{I}(u_0) + \mathcal{I}_0 \right], \quad s\ge 0. \label{x13}
\end{equation}
We then infer from Corollary~\ref{cor.CKN} (with $s=(m+1)/m$) and~\eqref{x13} that, for $s\ge 0$, $v^m(s)$ belongs to $L^{(m+1)/m}(\mathbb{R}^N)$ with
\begin{align}
	\|v(s)\|_{m+1}^{2m} & = \|v^m(s)\|_{(m+1)/m}^2 \le \kappa_{(m+1)/m}^2 \left[ \|\nabla v^m(s)\|_2^2 + N_\sigma^2(v^m(s)) \right] \nonumber \\
	& \le 4 \kappa_{(m+1)/m}^2 \left[ \mathcal{I}(u_0) + \mathcal{I}_0 \right]. \label{x14}
\end{align}
Therefore, $v(s)\in\mathcal{X}$ for all $s\ge 0$ by~\eqref{x13} and~\eqref{x14} and $v(s)\in L^\infty(\real^N)$ by~\eqref{resc.var} and Proposition~\ref{prop.wp}. Finally, it follows from~\eqref{x12} and~\eqref{x14} that, for $s\ge \ln{m}/(m-1)$,
\begin{align*}
	\|v(s)\|_\infty & \le C_1 (m-1)^N \left( \frac{e^{(m-1)s}}{e^{(m-1)s}-1} \right)^N \|v(s)\|_{m+1}^{2(m+1)} \\
	& \le 4^{(m+1)/m} C_1 m^N \kappa_{(m+1)/m}^{2(m+1)/m} \left[ \mathcal{I}(u_0) + \mathcal{I}_0 \right]^{(m+1)/m}
\end{align*}
which, together with~\eqref{x13} and~\eqref{x14}, entails~\eqref{ubv} and complete the proof.
\end{proof}

We have now all the required preparations in order to complete the proofs of the two main results.

\section{Stationary solutions}\label{sec.stso}

We start with the study of the stationary problem associated to Eq.~\eqref{eq2} and recall that, if $v\in\mathcal{X}_+$ is a stationary solution to~\eqref{eq2}, then $V=v^m\in\mathcal{Y}_+$ is a solution to Eq.~\eqref{EE}; that is,
\begin{equation*}
	V\in \mathcal{Y} = \dot{H}^1(\mathbb{R}^N)\cap L^2_{\sigma}(\mathbb{R}^N)\cap L^{(m+1)/m}(\mathbb{R}^N)
\end{equation*}
is a weak solution to the elliptic equation
\begin{equation}\label{eq.stat}
-\Delta V(x)+|x|^{\sigma}V(x)=V^{1/m}(x), \quad x\in\mathbb{R}^N.
\end{equation}

This section is split into three parts: the first one deals with arbitrary non-trivial solutions $V\in\mathcal{Y}_+$ to Eq.~\eqref{eq.stat}, which are shown to be classical bounded solutions and satisfy a tail estimate, see Section~\ref{sec.br}. We next exploit the variational structure of Eq.~\eqref{eq.stat} and classical properties of rearrangements to construct at least one radially symmetric solution with non-increasing profile, see Section~\ref{sec.exrad}. We end up this section by establishing the uniqueness of such a solution, proved in Section~\ref{sec.uniqrad} using techniques from the theory of ordinary differential equations.

\subsection{Boundedness and regularity}\label{sec.br}

We first show the boundedness of solutions $V\in\mathcal{Y}_+$ to~\eqref{eq.stat}. To this end, we employ a Moser iteration technique based once more on the CKN inequality~\eqref{opt.CKN}, used in its full strength here. In order to prepare the iteration, we first prove the following inequality:

\begin{lemma}\label{lem.Moser}
Let $m$ and $\sigma$ be as in~\eqref{range.exp}, $s\in((m+1)/m,2^*)$, $p\geq2$ and $V\in\mathcal{Y}_+$ be a solution to Eq.~\eqref{eq.stat}. Then there is $C>0$ (depending on $m$, $N$, $s$, $\sigma$ but not on $p$) such that
\begin{equation}\label{est.Moser}
	\left(\int_{\mathbb{R}^N}V^{sp/2}(x)\,dx\right)^{2/s}\leq Cp\int_{\mathbb{R}^N}V^{p-(m-1)/m}(x)\,dx.
\end{equation}
\end{lemma}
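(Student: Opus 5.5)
The plan is to test the weak formulation of \eqref{eq.stat} with $V^{p-1}$ and then apply Corollary~\ref{cor.CKN} to $w:=V^{p/2}$. Since $\|w\|_s^2 = \big(\int_{\mathbb{R}^N} V^{sp/2}\,dx\big)^{2/s}$, the left-hand side of \eqref{est.Moser} is exactly $\|w\|_s^2$. Corollary~\ref{cor.CKN} gives
\begin{equation*}
	\|w\|_s^2\le \kappa_s^2\big[\|\nabla w\|_2^2+N_\sigma^2(w)\big].
\end{equation*}
It therefore suffices to bound $\|\nabla w\|_2^2+N_\sigma^2(w)$ by $Cp\int_{\mathbb{R}^N} V^{p-(m-1)/m}\,dx$.

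Testing \eqref{eq.stat} against $V^{p-1}$ gives
\begin{equation*}
	(p-1)\int_{\mathbb{R}^N} V^{p-2}|\nabla V|^2\,dx+\int_{\mathbb{R}^N}|x|^\sigma V^p\,dx=\int_{\mathbb{R}^N} V^{p-1+1/m}\,dx .
\end{equation*}
The right-hand side is $\int_{\mathbb{R}^N} V^{p-(m-1)/m}\,dx$, since $p-1+1/m=p-(m-1)/m$. We also have
\begin{equation*}
	|\nabla w|^2=\frac{p^2}{4}V^{p-2}|\nabla V|^2 \quad\text{and}\quad N_\sigma^2(w)=\int_{\mathbb{R}^N}|x|^\sigma V^p\,dx .
\end{equation*}
Multiplying the identity by $p^2/(4(p-1))$, and using $p^2/(4(p-1))\le p/2$ together with $p^2/(4(p-1))\ge 1$ for $p\ge2$, yields
\begin{equation*}
	\|\nabla w\|_2^2+N_\sigma^2(w)\le \frac{p^2}{4(p-1)}\int_{\mathbb{R}^N} V^{p-(m-1)/m}\,dx\le \frac p2\int_{\mathbb{R}^N} V^{p-(m-1)/m}\,dx .
\end{equation*}
Combining this with the CKN bound gives \eqref{est.Moser} with $C=\kappa_s^2/2$, which depends only on $N,m,\sigma,s$ and not on $p$.

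The main obstacle is justifying the choice of $V^{p-1}$ as a test function, because $V$ is not yet known to be bounded. I would handle this by truncation. Take $\zeta=\min\{V,k\}^{p-2}V$, or more simply $T_k(V)^{p-1}$ with $T_k(V)=\min\{V,k\}$, which lies in $\dot H^1\cap L^2_\sigma$ since $V$ does and $T_k$ is Lipschitz and bounded. Then derive the analogous inequality for $w_k=T_k(V)^{(p-2)/2}V$ or $w_k=T_k(V)^{p/2}$. Keeping track of the constants requires the usual Moser computation, $|\nabla w_k|^2\le C p\,\nabla V\cdot\nabla(T_k(V)^{p-2}V)$. Since $T_k(V)\le V$, the right-hand side is bounded by $\int_{\mathbb{R}^N} V^{p-(m-1)/m}\,dx$, and we then let $k\to\infty$ using monotone convergence.

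If $\int_{\mathbb{R}^N} V^{p-(m-1)/m}\,dx=\infty$ there is nothing to prove. Otherwise all the quantities above are finite, and the limit gives \eqref{est.Moser}.
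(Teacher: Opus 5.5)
Your argument is correct and follows the paper's proof essentially verbatim. You test \eqref{eq.stat} with $V^{p-1}$, use $p\ge 2$ to trade the factor $4(p-1)/p^2$ for $2/p$ (your bounds $1\le p^2/(4(p-1))\le p/2$ are the same computation), and apply Corollary~\ref{cor.CKN} to $V^{p/2}$, which gives the same constant $\kappa_s^2/2$; your truncation via $T_k(V)^{p-1}$ and monotone convergence additionally justifies the choice of test function, which the paper only performs formally.
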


\begin{proof}
We multiply Eq.~\eqref{eq.stat} by $V^{p-1}$ and integrate over $\mathbb{R}^N$ to obtain
\begin{equation*}
	\int_{\mathbb{R}^N}\nabla V^{p-1}(x)\cdot\nabla V(x)\,dx+\int_{\mathbb{R}^N}|x|^{\sigma}V^p(x)\,dx=\int_{\mathbb{R}^N}V^{p-1+1/m}(x)\,dx,
\end{equation*}
which is equivalent, after easy manipulations, to
\begin{equation}\label{interm5}
	\frac{4(p-1)}{p^2}\|\nabla V^{p/2}\|_2^2+N_{\sigma}^2(V^{p/2})=\int_{\mathbb{R}^N}V^{p-1+1/m}(x)\,dx.
\end{equation}
Since $p\geq2$, we also have
\begin{equation}\label{interm6}
	\frac{4(p-1)}{p^2}\|\nabla V^{p/2}\|_2^2+N_{\sigma}^2(V^{p/2})\geq\frac{2}{p}\left[\|\nabla V^{p/2}\|_2^2+N_{\sigma}^2(V^{p/2})\right],
\end{equation}
and we deduce by combining \eqref{interm5}, \eqref{interm6} and the inequality \eqref{CKN-Young} applied to $z=V^{p/2}$ that
\begin{equation*}
	\|V^{p/2}\|_s^2\leq Cp\int_{\mathbb{R}^N}V^{p-1+1/m}(x)\,dx,
\end{equation*}
which is equivalent to \eqref{est.Moser}.
\end{proof}

We also need the following general technical result taken from \cite[Lemma~A.1]{L94}, that we state for completeness.

\begin{lemma}\label{lem.Moser2}
Let $a>1$, $b\geq0$, $c\in\mathbb{R}$, $C_0\geq1$, $C_1\geq1$ and $p_1>0$ be given such that $p_1(a-1)+c>0$. Let $\{p_j\}_{j\geq1}$ be the sequence defined by the recurrence $p_{j+1}=ap_j+c$ for $j\geq1$ and assume that $\{\gamma_j\}_{j\geq1}$ is a sequence of numbers satisfying
\begin{equation*}
	\gamma_1\leq C_1^{p_1}, \quad \gamma_{j+1}\leq C_0 p_{j+1}^b\max\{C_1^{p_{j+1}},\gamma_j^{a}\}, \quad j\geq1.
\end{equation*}
Then the sequence $\big\{\gamma_j^{1/p_j}\big\}_{j\geq1}$ is bounded.
\end{lemma}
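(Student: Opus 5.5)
We prove Lemma~\ref{lem.Moser2} by taking logarithms and turning the recursion into an affine recurrence for $\log\gamma_j$, normalized by $p_j$. Set $\theta_j:=\max\{\log\gamma_j,\,p_j\log C_1\}$, so that $\gamma_j\le e^{\theta_j}$ and $\theta_j\ge p_j\log C_1\ge0$ since $C_1\ge1$. We may assume that $\gamma_j>0$, the bound being trivial otherwise.

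First we check that $p_j$ grows geometrically. Since $a>1$, the recurrence $p_{j+1}=ap_j+c$ has the fixed point $p_*=-c/(a-1)$ and solution
\begin{equation*}
p_j-p_*=a^{j-1}(p_1-p_*).
\end{equation*}
The assumption $p_1(a-1)+c>0$ means exactly $p_1>p_*$. Hence $p_j=a^{j-1}(p_1-p_*)+p_*\to\infty$, and there are constants $0<k_1\le k_2$ with $k_1a^j\le p_j\le k_2a^j$ for all $j$. For the lower bound we use that $p_j>0$ for every $j$, which holds because $p_j$ is increasing: $p_{j+1}-p_j=(a-1)(p_j-p_*)>0$.

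Next we take logarithms of the inequality for $\gamma_{j+1}$. This gives
\begin{equation*}
\log\gamma_{j+1}\le\log C_0+b\log p_{j+1}+\max\{p_{j+1}\log C_1,\,a\log\gamma_j\}.
\end{equation*}
We also have $a\log\gamma_j\le a\theta_j$ and $p_{j+1}\log C_1\le ap_j\log C_1+|c|\log C_1\le a\theta_j+|c|\log C_1$. Since $\log C_0\ge0$, $b\ge0$ and $p_{j+1}\ge p_1$, both $\log\gamma_{j+1}$ and $p_{j+1}\log C_1$ are therefore bounded by $a\theta_j+d_{j+1}$, where
\begin{equation*}
d_{j+1}:=\log C_0+b\,|\log p_{j+1}|+|c|\log C_1=O(j).
\end{equation*}
This yields $\theta_{j+1}\le a\theta_j+d_{j+1}$.

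Dividing by $p_{j+1}=ap_j+c$ is the only delicate point, since $c$ may be negative. We write $\theta_j/p_j=:\eta_j\ge0$ and obtain
\begin{equation*}
\eta_{j+1}\le\frac{ap_j}{p_{j+1}}\,\eta_j+\frac{d_{j+1}}{p_{j+1}}.
\end{equation*}
The ratio $ap_j/p_{j+1}=1-c/p_{j+1}\le1+|c|/(k_1a^{j+1})$, and $d_{j+1}/p_{j+1}\le Cj\,a^{-j}$. Iterating gives
\begin{equation*}
\eta_{j}\le\prod_{i\ge1}\bigl(1+|c|k_1^{-1}a^{-i}\bigr)\Bigl(\eta_1+\sum_{i\ge1}Ci\,a^{-i}\Bigr).
\end{equation*}
Both the infinite product and the series converge because $a>1$, and $\eta_1\le\log C_1$ because $\gamma_1\le C_1^{p_1}$. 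Hence $\eta_j$ is bounded, and $\gamma_j^{1/p_j}\le e^{\eta_j}$ is bounded as well.

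The main obstacle is this last division step. The sign of $c$ and the polynomial factor $p_{j+1}^b$ must be absorbed by summable errors, and that absorption relies on the geometric growth of $p_j$ established at the start.
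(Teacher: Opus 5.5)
The paper does not prove this lemma. It quotes it from \cite[Lemma~A.1]{L94} and uses it only as a black box in the Moser iteration of Proposition~\ref{prop.br}. Your argument is therefore not an alternative to a proof in the paper; it is a correct, self-contained proof that the paper omits.

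The key steps all check out:
\begin{itemize}
\item The hypothesis $p_1(a-1)+c>0$ is exactly $p_1>p_*$. Hence $p_j$ is increasing and positive, and $p_ja^{-j}\to(p_1-p_*)/a>0$, which gives $k_1>0$.
\item Both branches of the maximum yield $\theta_{j+1}\le a\theta_j+d_{j+1}$. The branch $p_{j+1}\log C_1$ is where $\log C_0\ge 0$, $b\ge0$ and $C_1\ge1$ are used.
\item The division step is legitimate because $\eta_j\ge 0$. This lets you replace $ap_j/p_{j+1}=1-c/p_{j+1}$ by the upper bound $1+|c|/(k_1a^{j+1})$ whatever the sign of $c$.
\item Finally, $\eta_1=\log C_1$.
\end{itemize}
The approach also makes the mechanism visible. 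The normalized logarithms $\eta_j$ obey a perturbed recursion with multiplicative perturbations $O(a^{-j})$ and additive errors $O(ja^{-j})$. Both are summable precisely because $p_j$ grows geometrically. You also get an explicit bound on $\sup_j\gamma_j^{1/p_j}$ depending only on $a,b,c,C_0,C_1,p_1$.

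Three minor clean-ups:
\begin{itemize}
\item The statement implicitly requires $\gamma_j\ge0$, since otherwise $\gamma_j^a$ and $\gamma_j^{1/p_j}$ need not be defined. This holds in the paper's application, $\gamma_j=\|V\|_{p_j}^{p_j}$, and is worth stating.
\item Your sentence ``we may assume $\gamma_j>0$'' is not really a reduction, because you cannot discard individual indices from a recursion. It is also unnecessary. With the convention $\log 0=-\infty$, $\theta_j=p_j\log C_1$ is still finite when $\gamma_j=0$, and both $\gamma_j\le e^{\theta_j}$ and $\gamma_j^a\le e^{a\theta_j}$ hold. The argument then runs unchanged.
\item The appeal to $p_{j+1}\ge p_1$ is superfluous once $d_{j+1}$ is defined with $|\log p_{j+1}|$.
\end{itemize}
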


We are now ready to start the Moser iteration scheme and thus prove that $V\in L^{\infty}(\mathbb{R}^N)$. This last property ensures that $V$ is actually positive in $\mathbb{R}^N$ and a classical solution to~Eq.~\eqref{eq.stat}.

\begin{proposition}\label{prop.br}
Let $m$ and $\sigma$ be as in~\eqref{range.exp} and $V\in\mathcal{Y}_+$ be a non-trivial solution to Eq.~\eqref{eq.stat}. Then
\begin{equation*}
	V\in L^\infty(\mathbb{R}^N) \cap C^{2,\alpha}(\mathbb{R}^N) \;\;\text{ for }\;\; \alpha\in (0,1/m)
\end{equation*}
and $V>0$ in $\mathbb{R}^N$. In particular, $V$ is a classical solution to Eq.~\eqref{eq.stat}.
\end{proposition}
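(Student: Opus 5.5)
The proof has three stages: boundedness of $V$ by Moser iteration, regularity by elliptic theory, and positivity by the strong maximum principle.

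\textbf{Moser iteration.} Fix $s\in((m+1)/m,2^*)$ and set $a:=s/2>1$. Lemma~\ref{lem.Moser} gives
\begin{equation*}
\|V\|_{sp/2}^{p}\le Cp\int_{\mathbb{R}^N}V^{p-(m-1)/m}\,dx .
\end{equation*}
The right-hand side involves a power smaller than $p$, which helps. Define exponents by $q_{j+1}=a q_j+c$, where $q_j$ plays the role of $p-(m-1)/m$, so that $p_j=q_j+(m-1)/m$ and $q_{j+1}=s p_j/2=a q_j+a(m-1)/m$. Thus $c=a(m-1)/m>0$, and the hypothesis $q_1(a-1)+c>0$ of Lemma~\ref{lem.Moser2} holds.

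For the starting exponent, Lemma~\ref{lem.Moser} requires $p\ge2$. So choose $q_1\ge 2-(m-1)/m=(m+1)/m$ with $V\in L^{q_1}$. This is available: $V\in L^{(m+1)/m}$, and by Corollary~\ref{cor.CKN} also $V\in L^{r}$ for every $r\in[(m+1)/m,2^*)$, so $q_1=(m+1)/m$ works.

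With $\gamma_j:=\int V^{q_j}\,dx$, the estimate reads
\begin{equation*}
\gamma_{j+1}\le (Cp_j)^{a}\,\gamma_j^{a}\le C_0\,q_{j+1}^{a}\,\gamma_j^{a}.
\end{equation*}
Lemma~\ref{lem.Moser2} then gives $\sup_j\|V\|_{q_j}<\infty$. Since $q_j\to\infty$, this yields $V\in L^\infty(\mathbb{R}^N)$.

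The main obstacle is the formal multiplication by $V^{p-1}$ in Lemma~\ref{lem.Moser} when $V$ is not yet known to be bounded. I would justify it with the truncated test function $\min\{V,k\}^{p-2}V$ and let $k\to\infty$ by monotone convergence at each step. Each step is inductive: $V\in L^{q_j}$ makes the right-hand side finite, which is exactly what the limit requires.

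\textbf{Regularity.} Once $V$ is bounded, on every ball $B_R$ the equation reads $-\Delta V=f:=V^{1/m}-|x|^\sigma V\in L^\infty(B_R)$. Calder\'on--Zygmund estimates give $V\in W^{2,q}_{\rm loc}$ for all $q<\infty$, hence $V\in C^{1,\beta}_{\rm loc}$. The term $V^{1/m}$ is H\"older of exponent $1/m$, since $t\mapsto t^{1/m}$ is $1/m$-H\"older and $V$ is Lipschitz. The term $|x|^\sigma V$ is H\"older of exponent $\min\{\sigma,1\}$ near the origin. Schauder estimates then give $V\in C^{2,\alpha}$ for $\alpha<1/m$ (if $\sigma<1/m$ the weight would restrict the exponent, but locally the bound $\alpha<1/m$ is what the $V^{1/m}$ term allows, and the sharper interplay I would check separately). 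Hence $V$ is a classical solution.

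\textbf{Positivity.} Since $V\ge0$ and $V^{1/m}\ge0$,
\begin{equation*}
-\Delta V+|x|^\sigma V\ge 0 .
\end{equation*}
On each ball the coefficient $|x|^\sigma$ is bounded and nonnegative, so the strong maximum principle applies: if $V$ vanished at an interior point, then $V\equiv0$ in every ball, hence on $\mathbb{R}^N$. This contradicts non-triviality, so $V>0$ everywhere.
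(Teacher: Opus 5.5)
Your proof is essentially the paper's. Your $q_j$ is the paper's $p_j$, with $q_1=(m+1)/m$ so that Lemma~\ref{lem.Moser} is first applied with $p=2$. The recurrence, the appeal to Lemma~\ref{lem.Moser2}, the Calder\'on--Zygmund/Schauder bootstrap and the strong maximum principle on balls are the same as in the paper.

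Your truncated test function $\min\{V,k\}^{p-2}V\in\mathcal{Y}$ is a genuine improvement, since the paper multiplies by $V^{p-1}$ only formally. It works: with $w_k:=\min\{V,k\}^{(p-2)/2}V$ one has, pointwise,
\begin{equation*}
\nabla V\cdot\nabla\big(\min\{V,k\}^{p-2}V\big)\ge \tfrac{4(p-1)}{p^2}|\nabla w_k|^2,\qquad |x|^\sigma V\min\{V,k\}^{p-2}V=|x|^\sigma w_k^2,
\end{equation*}
and $V^{1/m}\min\{V,k\}^{p-2}V\le V^{p-(m-1)/m}$. Applying \eqref{CKN-Young} to $w_k$ and then monotone convergence gives \eqref{est.Moser}.

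\textbf{Choice of $s$.} You fix $s\in((m+1)/m,2^*)$ and then assert $a=s/2>1$. But $(m+1)/m<2$, so for $s\le 2$ you get $a\le 1$, which Lemma~\ref{lem.Moser2} excludes; for $a<1$ the exponents $q_j$ even stay bounded. You must take $s\in(2,2^*)$, as the paper does, which is always possible since $2^*>2$.

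\textbf{Regularity.} Your hesitation is justified, and it cannot be resolved in favour of the statement. The range $\alpha\in(0,1/m)$ is too large when $\sigma<1/m$, a case compatible with \eqref{range.exp} (e.g.\ $N=1$, $m=2$, $\sigma\in(1/3,1/2)$). Near the origin,
\begin{equation*}
V(0)\,|x|^\sigma=\Delta V+V^{1/m}-|x|^\sigma\big(V-V(0)\big).
\end{equation*}
Here $V^{1/m}$ is $C^1$ near $0$ because $V(0)>0$, and the last term is Lipschitz because its gradient is $O(|x|^\sigma)$. If $V$ were $C^{2,\alpha}$ near $0$ for some $\alpha\in(\sigma,1)$, the right-hand side would be $\alpha$-H\"older at $0$, which $|x|^\sigma$ is not.

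The paper's proof tracks the H\"older regularity of $V^{1/m}$ but overlooks that of the weight at the origin. What the bootstrap (yours or the paper's) actually yields is $V\in C^{2,\alpha}_{\mathrm{loc}}$ for $\alpha\in(0,\min\{\sigma,1/m\})$. Your parenthetical remark about $V^{1/m}$ is only accurate before positivity is known. If you prove $V>0$ first, which needs only $V\in W^{2,q}_{\mathrm{loc}}$ or $C^2$, then $V^{1/m}$ imposes no restriction. The only obstruction is then the weight, and you get $\alpha\in(0,\min\{\sigma,1\})$. None of this affects boundedness, positivity, or the fact that $V$ is a classical $C^2$ solution, which is what the rest of the paper uses.
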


\begin{proof}
Pick $p\geq2$ and $s\in(2,2^*)$. We set 
\begin{equation*}
	p_1 := \frac{m+1}{m}, \quad p_{j+1} :=  \frac{s}{2}\left( p_j+\frac{m-1}{m} \right), \quad j\ge 1,
\end{equation*}
and infer from the inequality~\eqref{est.Moser} with $p=p_j+(m-1)/m$ that, for $j\ge 1$,
\begin{equation*}
	\left(\int_{\mathbb{R}^N}V^{p_{j+1}}(x)\,dx\right)^{2/s}\leq C\left(p_j+\frac{m-1}{m}\right)\int_{\mathbb{R}^N}V^{p_j}(x)\,dx;
\end{equation*}
that is,
\begin{equation}\label{interm7}
	\|V\|_{p_{j+1}}^{p_{j+1}}\leq C\left(p_j+\frac{m-1}{m}\right)^{s/2}\|V\|_{p_j}^{sp_j/2}.
\end{equation}
Set next $\gamma_j:=\|V\|_{p_j}^{p_j}$ for $j\ge 1$. In this notation, the estimate~\eqref{interm7} becomes
\begin{equation*}
	\gamma_{j+1}\leq C\left(\frac{2}{s}p_{j+1}\right)^{s/2}\gamma_j^{s/2} \le C p_{j+1}^{s/2}\gamma_j^{s/2}, \quad j\ge 1.
\end{equation*}
Since $\|V\|_{p_1}$ is finite, we are in a position to apply Lemma~\ref{lem.Moser2} (with $a=s/2>1$, $C_0=C$ and $C_1=1+\|V\|_{p_1}$) and conclude that the sequence $\big\{\gamma_j^{1/p_j}\big\}_{j\geq1}$ is bounded. Since
\begin{equation*}
	\lim\limits_{j\to\infty}\gamma_j^{1/p_j}=\lim\limits_{j\to\infty}\|V\|_{p_j}=\|V\|_{\infty},
\end{equation*}
we have just shown that $V\in L^{\infty}(\mathbb{R}^N)$.

Owing to the just established boundedness of $V$, the function $V^{1/m}$ belongs to $L_{\mathrm{loc}}^p(\mathbb{R}^N)$ for all $p\in (1,\infty)$. Elliptic regularity applied to~\eqref{eq.stat} then implies that $V\in W_{\mathrm{loc}}^{2,p}(\mathbb{R}^N)$ for all $p\in (1,\infty)$, see \cite[Theorem~9.11]{GT01}. Sobolev embeddings guarantee that $V\in C^{1,\alpha}(\mathbb{R}^N)$ for all $\alpha\in (0,1)$, from which we deduce that $V^{1/m}\in C^{\alpha}(\mathbb{R}^N)$ for all $\alpha\in (0,1/m)$. We then infer from \cite[Theorem~9.19]{GT01} that $V\in C^{2,\alpha}(\mathbb{R}^N)$ for all $\alpha\in (0,1/m)$, which completes the proof of the regularity estimates on $V$.

Finally, assume for contradiction that there is $x_0\in\mathbb{R}^N$ such that
\begin{equation*}
	V(x_0)=0=\min_{\mathbb{R}^N} V,
\end{equation*}
the last equality being a consequence of the non-negativity of $V$. Let $R>0$. Since
\begin{equation*}
	\Delta V(x) - |x|^\sigma V(x) \le 0 \;\;\text{ and }\;\; - R^\sigma \le - |x|^\sigma \le 0, \quad x\in B(x_0,R),
\end{equation*}
it follows from the strong comparison principle that $V\equiv 0$ in $B(x_0,R)$, see \cite[Theorem~3.5]{GT01} and its consequences. As $R>0$ is arbitrary, we conclude that $V$ vanishes identically in $\mathbb{R}^N$, and a contradiction. Therefore, $V>0$ in $\mathbb{R}^N$ and the proof is complete.
\end{proof}

We next refine Proposition~\ref{prop.br} by showing that any non-trivial solution $V\in\mathcal{Y}_+$ decays algebraically as $|x|\to\infty$.
\begin{lemma}\label{lem.tail2}
Let $m$ and $\sigma$ be as in~\eqref{range.exp} and let $V\in \mathcal{Y}_+$ be a non-trivial solution to Eq.~\eqref{eq.stat}. Then, there is $C>0$ (depending only on $N$, $m$, $\sigma$ and $\|V\|_\infty$) such that
\begin{equation*}
	V(x) \le C |x|^{-\sigma m/(m-1)}, \quad x\in\mathbb{R}^N.
\end{equation*}
\end{lemma}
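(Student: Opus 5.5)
We will prove the tail estimate by comparing $V$ with an explicit supersolution of \eqref{eq.stat} on the exterior of a ball, using the elliptic comparison principle. By Proposition~\ref{prop.br}, $V$ is a bounded, positive, classical solution with $M:=\|V\|_\infty<\infty$. The heuristic is that, for large $|x|$, the balance in \eqref{eq.stat} is between $|x|^\sigma V$ and $V^{1/m}$. This gives $V\approx |x|^{-\sigma m/(m-1)}$ (consistent with \eqref{beh.stat}), and the Laplacian is of lower order there since $|x|^{-\sigma m/(m-1)-2}\ll |x|^{-\sigma/(m-1)}$.

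We take the candidate $W(x):=A|x|^{-\beta}$ with $\beta:=\sigma m/(m-1)$ and $A>0$ large, considered on $\{|x|>R\}$. A direct computation gives
\begin{equation*}
-\Delta W+|x|^\sigma W-W^{1/m}=A|x|^{-\beta-2}\big[\beta(N-2-\beta)\big]+|x|^{-\beta/m}\big(A-A^{1/m}\big),
\end{equation*}
since $\sigma-\beta=-\beta/m$. The first term can be negative, but it is bounded in absolute value by $A\beta|\beta+2-N||x|^{-\beta-2}$. We then fix $A\ge 2^{m/(m-1)}$, so that $A-A^{1/m}\ge A/2$. For $|x|\ge R_1$, with $R_1$ depending only on $N,m,\sigma$ and chosen so that $\beta|\beta+2-N|R_1^{-2-\beta+\beta/m}\le 1/4$, the right-hand side is then at least $(A/4)|x|^{-\beta/m}>0$. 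Hence $W$ is a strict supersolution on $\{|x|>R_1\}$. Enlarging $A$ further so that $AR_1^{-\beta}\ge M$ ensures $W\ge V$ on $\partial B(0,R_1)$.

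The comparison on the unbounded domain $\Omega:=\{|x|>R_1\}$ is the delicate step, because the nonlinearity $V\mapsto V^{1/m}$ is not monotone in the favorable direction; it is increasing and concave. We would argue at a positive maximum of $V-W$ directly. Both $V$ and $W$ tend to $0$ as $|x|\to\infty$. For $W$ this is explicit. For $V$, uniform decay follows from $V\in C^{1,\alpha}$ bounded and $V\in L^{(m+1)/m}$: a uniformly continuous integrable function tends to zero. Hence if $\sup_\Omega(V-W)>0$, it is attained at some interior point $x_1$ with $|x_1|>R_1$. At $x_1$ we have $V>W>0$ and $-\Delta(V-W)\ge0$. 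Subtracting the two relations yields
\begin{equation*}
|x_1|^\sigma V-V^{1/m}\le |x_1|^\sigma W-W^{1/m}-\tfrac{A}{4}|x_1|^{-\beta/m}<|x_1|^\sigma W-W^{1/m}.
\end{equation*}

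This alone is not contradictory, since $g(y)=|x_1|^\sigma y-y^{1/m}$ is decreasing for small $y$. To handle this, we use that $W(x_1)=A|x_1|^{-\beta}$ with $A$ large places $W$ in the region where $g$ is increasing. Indeed, $g'(y)=|x_1|^\sigma-\tfrac1m y^{(1-m)/m}\ge0$ for $y\ge m^{-m/(m-1)}|x_1|^{-\beta}$. So, after taking $A\ge m^{-m/(m-1)}$, $g$ is increasing on $[W(x_1),\infty)$, and $V>W$ gives $g(V)\ge g(W)$, a contradiction. An alternative is to use $V/W$ and the concavity (Brezis–Oswald type) argument, but the direct one suffices.

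Therefore $V\le A|x|^{-\beta}$ for $|x|\ge R_1$, while for $|x|\le R_1$ we have $V\le M\le AR_1^{-\beta}\le A|x|^{-\beta}$ by the choice of $A$. This yields the claim with $C=A$, which depends only on $N,m,\sigma,M$.
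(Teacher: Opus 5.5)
Your comparison argument at an interior maximum is correct. The formula for $-\Delta W+|x|^\sigma W-W^{1/m}$ checks out, as does the choice of $R_1$ (the relevant exponent is $-2-\beta+\beta/m=-2-\sigma$). So does the key observation that $g(y)=|x_1|^\sigma y-y^{1/m}$ is increasing on $[m^{-m/(m-1)}|x_1|^{-\beta},\infty)$, an interval containing both $W(x_1)$ and $V(x_1)$.

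There is, however, one genuine gap: the existence of that maximum. You need $V(x)\to 0$ as $|x|\to\infty$, and you obtain it from uniform continuity of $V$ on $\mathbb{R}^N$, citing $V\in C^{1,\alpha}$. But Proposition~\ref{prop.br} only provides local regularity; its proof goes through $W^{2,p}_{\mathrm{loc}}$ and local Schauder estimates. Standard elliptic estimates give a gradient bound uniform over unit balls far from the origin only if $\Delta V=|x|^\sigma V-V^{1/m}$ is uniformly controlled there, i.e.\ if $|x|^\sigma V$ is bounded. That is essentially the statement you are proving. So, as written, the decay step is circular, and without it the positive supremum of $V-W$ could a priori be approached only along $|x|\to\infty$.

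The repair is short and uses only boundedness and integrability. Since $\Delta V\ge -V^{1/m}\ge -M^{1/m}$, the function $V+\frac{M^{1/m}}{2N}|\cdot-x_0|^2$ is subharmonic. The mean-value inequality, as in Lemma~\ref{lem.lub}, then gives
\[
V(x_0)\le |B(0,r)|^{-m/(m+1)}\,\|V\|_{L^{(m+1)/m}(B(x_0,r))}+\frac{M^{1/m}r^2}{2(N+2)},\qquad r>0.
\]
Hence $\limsup_{|x_0|\to\infty}V(x_0)\le M^{1/m}r^2/(2(N+2))$ for every $r>0$, and so $V\to 0$. With this added, your proof is complete.

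Your route differs from the paper's, which never needs decay. The paper compares $V_\varepsilon=V-\varepsilon|x|^2$, which is negative on $\{|x|=R_2\}$ for $R_2^2\ge M/\varepsilon$, on bounded sets and lets $\varepsilon\to0$. It also linearizes. On $\{|x|^\sigma V^{(m-1)/m}>a\}$ the source $V^{1/m}$ is absorbed into $\frac1a|x|^\sigma V$, so $V_\varepsilon$ is a subsolution for $-\Delta+\frac{a-1}{a}|x|^\sigma$. The boundary of that level set is then controlled by taking $A\ge a^{m/(m-1)}$. Your use of the monotonicity of $g$ at the maximum point encodes the same mechanism (absorption dominates wherever $V\gtrsim|x|^{-\beta}$) more directly, at the cost of the extra decay step.
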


\begin{proof}
Let $\varepsilon\in (0,1)$ and $V_\varepsilon(x) = V(x) - \varepsilon |x|^2$ for $x\in\mathbb{R}^N$. For $a>1$ and  $R_2>R_1>1$, we define the open set
\begin{align*}
	\Omega_{a,R_1,R_2} & := \left\{ x\in B(0,R_2)\setminus \bar{B}(0,R_1)\ :\ |x|^\sigma V^{(m-1)/m}(x)>a \right\} \\
	& = \left\{ x\in B(0,R_2)\setminus \bar{B}(0,R_1)\ :\ |x|^\sigma V(x)>a V^{1/m}(x) \right\}
\end{align*}
and assume that
\begin{equation}
\begin{split}
	R_1^{\sigma+2} & \ge R_1^{\sigma+2}(a) := \frac{a}{a-1} \max\left\{ 2N, \frac{\sigma m}{m-1} \left( 1+ \frac{\sigma m}{m-1} \right) \right\}, \\
	R_2^{2} & \ge R_2^2(\varepsilon) := \frac{\|V\|_\infty}{\varepsilon}.
\end{split} \label{ph01}
\end{equation}
On the one hand, we deduce from~\eqref{eq.stat} and~\eqref{ph01} that, for $x\in \Omega_{a,R_1,R_2}$,
\begin{align*}
	-\Delta V_\varepsilon(x) + \frac{a-1}{a} |x|^\sigma V_\varepsilon(x) & = V^{1/m}(x) - \frac{|x|^\sigma}{a} V(x)  + \varepsilon \left[ 2N - \frac{a-1}{a} |x|^{\sigma+2} \right] \\
	& \le \varepsilon \left[ 2N - \frac{a-1}{a} R_1^{\sigma+2} \right] \le 0.
\end{align*}
Thus,
\begin{equation}
	-\Delta V_\varepsilon(x) + \frac{a-1}{a} |x|^\sigma V_\varepsilon(x) \le 0, \quad x\in \Omega_{a,R_1,R_2}. \label{ph02}
\end{equation}
On the other hand, setting $\theta=\sigma m/(m-1)$ and $S_A(x) := A |x|^{-\theta}$ for $x\in\mathbb{R}^N\setminus\{0\}$ with
\begin{equation}
	A\ge A_{a,R_1} := \max\left\{ a^{m/(m-1)}, \|V\|_\infty R_1^\theta \right\}, \label{ph04}
\end{equation}
we observe that, for $x\in \Omega_{a,R_1,R_2}$,
\begin{align*}
	-\Delta S_A(x) + \frac{a-1}{a} |x|^\sigma S_A(x) & = - A\theta ( 1 + \theta) |x|^{-\theta-2} + A(N-1)\theta |x|^{-\theta-2} \\
	& \qquad + \frac{A(a-1)}{a} |x|^{\sigma-\theta} \\
	& \ge A \left[ \frac{a-1}{a} |x|^{\sigma+2} - \theta(1+\theta)\right] |x|^{-\theta-2} \\
	& \ge A \left[ \frac{a-1}{a} R_1^{\sigma+2} - \theta(1+\theta)\right] |x|^{-\theta-2},
\end{align*}
which implies that, thanks to the choice~\eqref{ph01} of $R_1$,
\begin{equation}
	-\Delta S_A(x) + \frac{a-1}{a} |x|^\sigma S_A(x) \ge 0, \quad x\in \Omega_{a,R_1,R_2}. \label{ph03}
\end{equation}
Consider now $x\in\partial\Omega_{a,R_1,R_2}$. Either $|x|\in\{R_1,R_2\}$ and
\begin{align*}
	S_A(x) \ge 0 \ge V_\varepsilon(x), \quad |x|=R_2, \\
	S_A(x) = A {R_1}^{-\theta} \ge \|V\|_\infty \ge V_\varepsilon(x), \quad |x|=R_1,
\end{align*}
owing to~\eqref{ph01} and the choice~\eqref{ph04} of $A$. Or $|x|^\sigma V^{(m-1)/m}(x)=a$, which also reads $V(x)=a^{m/(m-1)} |x|^{-\theta}$ and we infer from~\eqref{ph04} that $S_A(x) \ge V(x) \ge V_\varepsilon(x)$. We have thus shown that $S_A(x)\ge V_\varepsilon(x)$ for $x\in\partial\Omega_{a,R_1,R_2}$, which we combine with~\eqref{ph02}, \eqref{ph03} and the comparison principle to conclude that
\begin{equation}
	V_\varepsilon(x) \le S_A(x) = A |x|^{-\theta}, \qquad x\in\Omega_{a,R_1,R_2}. \label{ph05}
\end{equation}
Now, fix $x\in\mathbb{R}^N$. Either $|x|^\sigma V^{(m-1)/m}(x) \le 2$, so that $V(x)\le 2^{m/(m-1)} |x|^{-\theta}$. Or $|x|^\sigma V^{(m-1)/m}(x) > 2$ and we distinguish two cases: if $|x|\le R_1(2)$, then $V(x) \le \|V\|_\infty \le \|V\|_\infty R_1^\theta(2) |x|^{-\theta}$. Otherwise, $|x|>R_1(2)$ and
\begin{equation*}
	x\in \Omega_{2,R_1(2),R_2(\varepsilon)} \;\;\text{ for any }\;\; \varepsilon\in \big( 0, \min\{1,\|V\|_\infty |x|^{-2}\}\big).
\end{equation*}
It then follows from~\eqref{ph04} and~\eqref{ph05} that
\begin{equation*}
	V(x) - \varepsilon |x|^2 \le A_{2,R_1(2)} |x|^{-\theta}, \quad \varepsilon\in \big( 0, \min\{1,\|V\|_\infty |x|^{-2}\}\big).
\end{equation*}
Letting $\varepsilon\to 0$ in the above inequality gives $V(x) \le A_{2,R_1(2)} |x|^{-\theta}$. Gathering the outcome of the above analysis completes the proof with $C= \max\left\{2^{m/(m-1)}, \|V\|_\infty R_1^\theta(2),A_{2,R_1(2)} \right\}$.
\end{proof}

\subsection{Radially symmetric stationary solutions with non-increasing profile: Existence}\label{sec.exrad}

We start with a simple general estimate that will be employed with decisive effect to prove the existence of a minimizer for $\mathcal{I}$.

\begin{lemma}\label{lem.tail}
Let $m$ and $\sigma$ be as in~\eqref{range.exp} and let $z\in\mathcal{Y}$. Then, there is $C_3>0$ (depending only on $N$, $m$ and $\sigma$) such that, for any $R>0$,
\begin{equation}\label{tail}
\begin{split}
	\|z\|_{(m+1)/m}^{(m+1)/m}&\leq C_3R^{N(m-1)/(2m)}\|z\|_{2}^{(m+1)/m}\\
	& \quad + C_3 R^{[(m+1)(\sigma_0-\sigma)]/(2m)} N_{\sigma}^{(m+1)/m}(z).
\end{split}
\end{equation}
\end{lemma}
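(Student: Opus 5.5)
Split the integral of $|z|^{(m+1)/m}$ over $\mathbb{R}^N$ into the ball $B(0,R)$ and its complement, and apply H\"older's inequality on each piece with an exponent adapted to the available norm. Set $q:=(m+1)/m\in(1,2)$.

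On $B(0,R)$, use H\"older with exponents $2/q$ and $2/(2-q)$:
\begin{equation*}
	\int_{B(0,R)}|z|^{q}\,dx\le |B(0,R)|^{(2-q)/2}\|z\|_2^{q} = \varpi_N^{(2-q)/2} R^{N(2-q)/2}\|z\|_2^{q}.
\end{equation*}
Since $2-q=(m-1)/m$, the exponent of $R$ is $N(m-1)/(2m)$, which matches the first term.

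On $\mathbb{R}^N\setminus B(0,R)$, write $|z|^q=|x|^{\sigma q/2}|z|^q\cdot|x|^{-\sigma q/2}$ and apply H\"older with the same exponents:
\begin{equation*}
	\int_{|x|>R}|z|^{q}\,dx\le N_\sigma^{q}(z)\left(\int_{|x|>R}|x|^{-\sigma q/(2-q)}\,dx\right)^{(2-q)/2}.
\end{equation*}
Here $\sigma q/(2-q)=\sigma(m+1)/(m-1)$, so the integral converges exactly when $\sigma(m+1)/(m-1)>N$, that is, when $\sigma>\sigma_0$. This is the only place where the assumption on $\sigma$ enters, and it is the one point that needs care. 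The integral equals $c\,R^{N-\sigma(m+1)/(m-1)}$, with $c$ depending only on $N$, $m$ and $\sigma$. Raising it to the power $(m-1)/(2m)$ gives the exponent
\begin{equation*}
	\frac{N(m-1)-\sigma(m+1)}{2m}=\frac{(m+1)(\sigma_0-\sigma)}{2m},
\end{equation*}
as required.

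Adding the two bounds and taking $C_3$ to be the larger of the two constants yields \eqref{tail}. For $z\in\mathcal{Y}$ the right-hand side may be infinite if $z\notin L^2$; in that case the inequality holds trivially, so no extra integrability assumption is needed.
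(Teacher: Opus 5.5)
Your proof is correct and is essentially the paper's argument: split at $|x|=R$, apply H\"older with exponents $2m/(m+1)$ and $2m/(m-1)$ on each piece, insert the weight $|x|^{\sigma(m+1)/(2m)}$ on the exterior, and use $\sigma>\sigma_0$ only to make $\int_{|x|>R}|x|^{-\sigma(m+1)/(m-1)}\,dx$ finite. Your exponents $(m+1)/m$ on $\|z\|_2$ and $N_\sigma(z)$ are the right ones, whereas the paper's intermediate display has a typo, $(m+1)/(2m)$. Your remark about an infinite right-hand side is harmless but unnecessary, since $\mathcal{Y}\subset L^2(\mathbb{R}^N)$ by Proposition~\ref{prop.CKN} with $s=2$.
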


\begin{proof}
Let $R>0$. We estimate the $L^{(m+1)/m}$-norm in the left-hand side of~\eqref{tail} by splitting the integral into a sum of integrals on $B(0,R)$ and its complement and applying suitable H\"older inequalities for each of the two terms, as follows:
\begin{equation*}
\begin{split}
\|&z\|_{(m+1)/m}^{(m+1)/m}=\int_{B(0,R)}|z|^{(m+1)/m}(x)\,dx\\
& \quad +\int_{\mathbb{R}^N\setminus B(0,R)}|x|^{\sigma(m+1)/(2m)} |z|^{(m+1)/m}(x)|x|^{-\sigma(m+1)/(2m)}\,dx\\
&\leq\left(\int_{B(0,R)} z^2(x)\,dx\right)^{(m+1)/(2m)}|B(0,R)|^{(m-1)/(2m)}\\
& \quad +\left(\int_{\mathbb{R}^N\setminus B(0,R)}|x|^{\sigma} z^2(x)\,dx\right)^{(m+1)/(2m)}\left(\int_{\mathbb{R}^N\setminus B(0,R)}|x|^{-\sigma(m+1)/(m-1)}\,dx\right)^{(m-1)/(2m)}\\
&\leq\big(\varpi_N R^N\big)^{(m-1)/(2m)} \|z\|_2^{(m+1)/(2m)}\\
& \quad + \left[ \frac{(m-1)N\varpi_N}{\sigma(m+1) - N(m-1)}\right]^{(m-1)/(2m)} \left(R^{N-\sigma(m+1)/(m-1)}\right)^{(m-1)/(2m)}N_{\sigma}^{(m+1)/(2m)}(z),
\end{split}
\end{equation*}
where $\varpi_N$ denotes the volume of the unit ball in $\mathbb{R}^N$ and we have taken into account that $\sigma>\sigma_0$, which entails the positivity of $\sigma(m+1)-N(m-1)$ and the finiteness of the integral over $\mathbb{R}^N\setminus B(0,R)$. Setting
\begin{equation*}
	C_3 := \max\left\{\varpi_N^{(m-1)/(2m)} , \left[\frac{(m-1)N\varpi_N}{\sigma(m+1)-N(m-1)}\right]^{(m-1)/2m}\right\}>0
\end{equation*}
completes the proof.
\end{proof}

We are now in a position to prove the existence of a radially symmetric minimizer to $\mathcal{I}$ with non-increasing profile.

\begin{proposition}\label{prop.exrad}
Let $m$ and $\sigma$ be as in~\eqref{range.exp}. There exists a non-trivial radially symmetric solution $V\in\mathcal{Y}_+$ to Eq.~\eqref{eq.stat} with non-increasing profile. Moreover, $V$ belongs to $L^{\infty}(\mathbb{R}^N)\cap C^{2,\alpha}(\mathbb{R}^N)$ for any $\alpha\in (0,1/m)$ and satisfies
\begin{equation}
	0< V(x) \le C |x|^{-\sigma m/(m-1)}, \quad x\in\mathbb{R}^N. \label{tc}
\end{equation}
\end{proposition}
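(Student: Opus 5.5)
The plan is to minimize $\mathcal{J}$ over $\mathcal{Y}$ by the direct method, restricting to symmetric-decreasing functions via rearrangement. Then Proposition~\ref{prop.br} and Lemma~\ref{lem.tail2} give regularity, positivity and the tail bound.

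First, $\mathcal{J}$ is bounded from below on $\mathcal{Y}$. Indeed, $\mathcal{J}(z)=\mathcal{I}(|z|^{1/m-1}z)$, so Lemma~\ref{lem.lower} gives $\mathcal{J}(z)\ge \frac14(\|\nabla z\|_2^2+N_\sigma^2(z))-\mathcal{I}_0$. The infimum is strictly negative. To see this, fix $0\le \psi\in C_c^\infty(\mathbb{R}^N)\setminus\{0\}$ and note that $\mathcal{J}(\lambda\psi)=\lambda^2 A-\lambda^{(m+1)/m}B$ with $B>0$; since $(m+1)/m<2$, this is negative for small $\lambda>0$. Let $(z_k)$ be a minimizing sequence. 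We replace $z_k$ by $|z_k|$, which does not change $\mathcal{J}$ because $\|\nabla|z|\|_2=\|\nabla z\|_2$. We then replace $|z_k|$ by its Schwarz symmetrization $z_k^*$. The Pólya--Szeg\H{o} inequality gives $\|\nabla z_k^*\|_2\le\|\nabla z_k\|_2$, and $L^p$ norms are preserved. The Hardy--Littlewood inequality, applied to $z^2$ and the increasing weight $|x|^\sigma$, gives $N_\sigma(z_k^*)\le N_\sigma(z_k)$; one can justify this by truncating the weight, $\min\{|x|^\sigma,M\}=M-(M-|x|^\sigma)_+$, and using $\int f^*g^*\ge\int fg$. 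Hence $\mathcal{J}(z_k^*)\le\mathcal{J}(z_k)$, and we may assume each $z_k$ is non-negative, radial and non-increasing. By the lower bound, $(z_k)$ is bounded in $\dot H^1\cap L^2_\sigma$, and by Corollary~\ref{cor.CKN} it is also bounded in $L^s$ for $s\in[(m+1)/m,2^*)$, in particular in $L^2$.

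Compactness is the main obstacle: we need $\|z_k\|_{(m+1)/m}\to\|V\|_{(m+1)/m}$ for a weak limit $V$. After extracting a subsequence, $z_k\rightharpoonup V$ weakly in $\dot H^1$, in $L^2_\sigma$ and in $L^2$. Rellich's theorem on balls gives $z_k\to V$ in $L^2(B(0,R))$ for every $R$, hence a.e. after a diagonal extraction. This strong local convergence is needed because the global $L^2$ norm only converges weakly. Tail control comes from the splitting in Lemma~\ref{lem.tail}, applied to $z_k-V$ on $B(0,R)$ and its complement: the exterior contribution is bounded by $C_3R^{(m+1)(\sigma_0-\sigma)/(2m)}\sup_k N_\sigma(z_k-V)^{(m+1)/m}$. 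This tends to $0$ as $R\to\infty$ uniformly in $k$, precisely because $\sigma>\sigma_0$. The interior part is controlled by $\|z_k-V\|_{L^2(B(0,R))}$ via Hölder's inequality, as in Lemma~\ref{lem.tail}, and tends to $0$. Hence $z_k\to V$ in $L^{(m+1)/m}$. Weak lower semicontinuity of $\|\nabla\cdot\|_2^2+N_\sigma^2$ then gives $\mathcal{J}(V)\le\liminf_k\mathcal{J}(z_k)=\inf\mathcal{J}<0$. Thus $V$ is a minimizer, it is non-trivial, and it is non-negative, radial and non-increasing, since these properties pass to a.e. limits.

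Finally, $V$ is a weak solution of \eqref{eq.stat}. Take the first variation along $V+t\phi$ for $\phi\in C_c^\infty$. The map $t\mapsto\int|V+t\phi|^{(m+1)/m}\,dx$ is differentiable with derivative $\frac{m+1}{m}\int|V|^{1/m-1}V\phi\,dx$, since the exponent exceeds $1$. This yields $-\Delta V+|x|^\sigma V=V^{1/m}$ in the weak sense, using $V\ge0$. Proposition~\ref{prop.br} then gives $V\in L^\infty\cap C^{2,\alpha}$ and $V>0$, and Lemma~\ref{lem.tail2} gives the upper bound in \eqref{tc}.
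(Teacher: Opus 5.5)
Your proof is correct and follows essentially the paper's route: the CKN-based lower bound for $\mathcal{J}$, negativity of $\inf\mathcal{J}$ via small multiples, the splitting of Lemma~\ref{lem.tail} (using $\sigma>\sigma_0$) to upgrade $L^2$-type convergence to convergence in $L^{(m+1)/m}$, symmetric decreasing rearrangement, the Euler--Lagrange equation, and then Proposition~\ref{prop.br} and Lemma~\ref{lem.tail2}. The differences are only in ordering and detail. You symmetrize the minimizing sequence before passing to the limit, justify $N_\sigma(z^*)\le N_\sigma(z)$ explicitly, and use local Rellich compactness. The paper instead extracts a minimizer first via global $L^2$ compactness and then replaces it by $|V|^*$.
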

\begin{proof}
As already mentioned in the Introduction, it is more convenient to work with the functional $\mathcal{J}$ defined by
\begin{equation*}
	\mathcal{J}(V) = \mathcal{I}(|V|^{(1-m)/m}V) =\frac{1}{2} \left( \|\nabla V\|_2^2 + N_{\sigma}^2(V) \right) - \frac{m}{m+1} \|V\|_{(m+1)/m}^{(m+1)/m}, \quad V\in\mathcal{Y}.
\end{equation*}
According to Corollary~\ref{cor.CKN} (with $s=(m+1)/m$) and Lemma~\ref{lem.lower}, the functional $\mathcal{J}$ is bounded from below and coercive on $\mathcal{Y}$, so that there is a minimizing sequence $\{V_j\}_{j\ge 1}$ in $\mathcal{Y}$ satisfying
\begin{equation}
\lim_{j\to\infty} \mathcal{J}(V_j) = \inf_{\mathcal{Y}} \mathcal{J} \;\;\text{ and }\;\; \|\nabla V_j\|_2+\|V_j\|_{(m+1)/m}+N_{\sigma}(V_j)\leq C_4, \quad j\ge 1, \label{x15}
\end{equation}
for some $C_4>0$ depending only on $N$, $m$ and $\sigma$. In particular, $\{V_j\}_{j\geq1}$ is bounded in $\dot{H}^1(\mathbb{R}^N)$ and in $L^2_{\sigma}(\mathbb{R}^N)$ and thus relatively compact in $L^2(\mathbb{R}^N)$ since $\sigma>0$. Consequently, there are $V\in \dot{H}^1(\mathbb{R}^N)\cap L^2_{\sigma}(\mathbb{R}^N)$ and a subsequence of $\{V_j\}_{j\ge 1}$ (not relabeled) such that
\begin{equation}
	\lim_{j\to\infty} \|V_j-V\|_2 = 0 \label{x16}
\end{equation}
and $\{V_j\}_{j\ge 1}$ converges weakly to $V$ in $\dot{H}^1(\mathbb{R}^N)\cap L^2_{\sigma}(\mathbb{R}^N)$. This weak convergence, along with the lower semicontinuity of the norms in $\dot{H}^1(\mathbb{R}^N)$ and $L_\sigma^2(\mathbb{R}^N)$ and~\eqref{x15}, ensures that
\begin{equation}
\|\nabla V\|_2\leq\liminf\limits_{j\to\infty}\|\nabla V_j\|_2, \quad N_\sigma(V) \le \liminf\limits_{j\to\infty} N_\sigma(V_j) \le C_4. \label{x17}
\end{equation}
Finally, we derive the strong convergence in the $L^{(m+1)/m}$-norm by employing~\eqref{tail}. Indeed, for $R>0$, it follows from~\eqref{tail}, \eqref{x15} and~\eqref{x17} that
\begin{align*}
	\|V_j-V\|_{(m+1)/m}^{(m+1)/m} & \leq C_3 R^{N(m-1)/(2m)} \|V_j-V\|_{2}^{(m+1)/m} \\
	& \quad + C_3 R^{[(m+1)(\sigma_0-\sigma)]/(2m)} N_\sigma^{(m+1)/m}(V_j-V)\\
	& \le C_3 R^{N(m-1)/(2m)} \|V_j-V\|_{2}^{(m+1)/m} \\
	& \quad + C_3 R^{[(m+1)(\sigma_0-\sigma)]/(2m)} (2C_4)^{(m+1)/m}.
\end{align*}
Thanks to the convergence~\eqref{x16}, we may take the limit $j\to\infty$ in the above inequality to obtain
\begin{equation*}
	\limsup\limits_{j\to\infty}\|V_j-V\|_{(m+1)/m}^{(m+1)/m} \leq C_3 (2C_4)^{(m+1)/m} R^{[(m+1)(\sigma_0-\sigma)]/(2m)}.
\end{equation*}
The above inequality being valid for any $R>0$, we take the limit $R\to\infty$ and, taking into account the range $\sigma>\sigma_0$, we conclude that $\{V_j\}_{j\ge 1}$ converges to $V$ in $L^{(m+1)/m}(\mathbb{R}^N)$. Together with~\eqref{x15} and~\eqref{x17}, this convergence implies that
\begin{equation*}
	\inf_{\mathcal{Y}} \mathcal{J} \le \mathcal{J}(V) \le \liminf_{j\to\infty} \mathcal{J}(V_j) = \inf_{\mathcal{Y}} \mathcal{J},
\end{equation*}
so that $V$ is a minimizer for $\mathcal{J}$ on $\mathcal{Y}$. In particular, $V$ is a weak solution to its associated Euler-Lagrange equation, which is Eq.~\eqref{eq.stat}. Moreover, the properties of the symmetric decreasing rearrangement and the P\'olya-Szeg\"o inequality, see \cite[Section~3.3 \& Lemma~7.17]{LL01} for instance, directly imply that
\begin{equation*}
\mathcal{J}(V)=\mathcal{J}(|V|)\geq\mathcal{J}(|V|^*),
\end{equation*}
which ensures that there exists a radially symmetric minimizer $V\in\mathcal{Y}_+$ to $\mathcal{J}$ on $\mathcal{Y}$ with non-increasing profile. We finally observe that, for any $W\in\mathcal{Y}_+$, $W\not\equiv 0$, and $\lambda>0$,
\begin{equation*}
	\frac{\mathcal{J}(\lambda W)}{\lambda^2} = \frac{1}{2} \left( \|\nabla W\|_2^2 + N_{\sigma}^2(W) \right) - \frac{m}{m+1} \lambda^{-(m-1)/m} \|W\|_{(m+1)/m}^{(m+1)/m},
\end{equation*}
which is negative for $\lambda$ small enough (depending on $W$). Consequently, $\mathcal{J}$ takes negative values on $\mathcal{Y}$ and thus $\mathcal{J}(V)<0$, which implies that $V\not\equiv 0$. Finally, since $V\in\mathcal{Y}_+$ is a non-trivial solution to~\eqref{eq.stat}, we infer from Proposition~\ref{prop.br} and Lemma~\ref{lem.tail2} that $V$ satisfies the properties listed in Proposition~\ref{prop.exrad}.
\end{proof}

\subsection{Radially symmetric stationary solutions with non-increasing profile: Uniqueness}\label{sec.uniqrad}

Let $m$ and $\sigma$ be as in~\eqref{range.exp} and let $V\in\mathcal{Y}_+\cap C^2(\mathbb{R}^N)$ be a radially symmetric solution to Eq.~\eqref{eq.stat} with non-increasing profile. Then $V$ is a solution to the differential equation
\begin{equation}\label{rad.ODE}
	V''(r)+\frac{N-1}{r}V'(r)-r^{\sigma}V(r)+V^{1/m}(r)=0, \quad r=|x|\in (0,\infty),
\end{equation}
where we use again the notation $V(r)=V(|x|)=V(x)$, $x\in\mathbb{R}^N$ for simplicity. Inserting formally the ansatz $V(r)\sim Ar^{-\theta}$ as $r\to\infty$ into \eqref{rad.ODE}, we find by direct calculation that
\begin{equation*}
A\theta(\theta+2-N)r^{-\theta-2}-Ar^{-\theta+\sigma}+A^{1/m}r^{-\theta/m}\sim0, \quad r\to\infty,
\end{equation*}
and, since $\sigma>\sigma_0>0$, the only way to balance dominating terms is to let $-\theta/m=-\theta+\sigma$; that is, $\theta=m\sigma/(m-1)$, and $A=1$. The expected behavior of $V$ at infinity is then given by
\begin{equation*}
	\lim\limits_{r\to\infty}r^{m\sigma/(m-1)}V(r)=1,
\end{equation*}
which is equivalent to~\eqref{beh.stat} (recall the relationship $\varphi=V^{1/m}$) and matches the tail estimate derived in Proposition~\ref{prop.exrad}. We now give a rigorous proof of this expected asymptotic behavior.

\begin{lemma}\label{lem.beh.inf}
Let $m$ and $\sigma$ be as in~\eqref{range.exp} and consider a radially symmetric solution $V\in\mathcal{Y}_+\cap C^2(\mathbb{R}^N)$ to Eq.~\eqref{eq.stat} with non-increasing profile. Then
\begin{equation}\label{beh.inf}
	\lim\limits_{r\to\infty}r^{m\sigma/(m-1)}V(r)=1.
\end{equation}
\end{lemma}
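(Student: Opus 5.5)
We set $\theta:=m\sigma/(m-1)$ and $W(r):=r^{\theta}V(r)$, and we prove separately that $\limsup_{r\to\infty}W(r)\le 1$ and $\liminf_{r\to\infty}W(r)\ge 1$. Proposition~\ref{prop.exrad} (equivalently Lemma~\ref{lem.tail2}) already gives that $W$ is bounded on $(0,\infty)$. The equation is then rewritten as
\begin{equation*}
	-\Delta V = V^{1/m}\big(1 - r^{\sigma}V^{(m-1)/m}\big) = V^{1/m}\big(1-W^{(m-1)/m}\big).
\end{equation*}
In this form $W=1$ is precisely the balance between absorption and reaction, and the diffusion term is of lower order $r^{-\theta-2}$ compared with $r^{\sigma-\theta}$.

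\textbf{Upper bound.} For $\varepsilon>0$ we compare $V$ with $S(x)=(1+\varepsilon)^{m/(m-1)}|x|^{-\theta}$ on an exterior domain. Unlike Lemma~\ref{lem.tail2}, we treat $-\Delta V+|x|^\sigma V - V^{1/m}=0$ directly, by a monotone-operator argument on the set where $V>S$. There $|x|^\sigma V-V^{1/m}=V^{1/m}(|x|^\sigma V^{(m-1)/m}-1)$ is increasing in $V$ for $V$ above $|x|^{-\theta}$. A computation gives $-\Delta S+|x|^\sigma S-S^{1/m}\ge S^{1/m}\big[(1+\varepsilon)-1\big]-C r^{-\theta-2}\ge0$ for $r\ge R_\varepsilon$, since $S^{1/m}\sim r^{\sigma-\theta}\gg r^{-\theta-2}$. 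Hence $S$ is a supersolution for $r\ge R_\varepsilon$.

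In the radial ODE setting we can avoid comparison on unbounded domains altogether. If $V-S$ had a positive maximum at some $r_0>R_\varepsilon$, then at $r_0$ we would have $(V-S)''\le0$ and $(V-S)'=0$. This contradicts $-(V-S)''-\frac{N-1}{r}(V-S)'\ge [r^\sigma V-V^{1/m}]-[r^\sigma S-S^{1/m}]+\text{(positive)}>0$. Since $V-S\to0$ at infinity (both decay like $r^{-\theta}$ and $V\le C r^{-\theta}$), the only alternative is that $V-S$ is eventually monotone. We handle this by replacing $S$ with $S+\delta$ and letting $\delta\to 0$, as in the proof of Lemma~\ref{lem.tail2} with the $\varepsilon|x|^2$ correction. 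This yields $\limsup W\le(1+\varepsilon)^{m/(m-1)}$ for every $\varepsilon>0$.

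\textbf{Lower bound.} This is the main obstacle, because sub-solutions must stay positive and we must rule out $V$ decaying faster than $r^{-\theta}$. We first try an ODE argument using the monotonicity of $V$. Suppose $\liminf W<1$, say $W(r_n)\le 1-\eta$ along $r_n\to\infty$. Because $W$ is bounded and the ODE is $V''+\frac{N-1}{r}V'=V^{1/m}(W^{(m-1)/m}-1)$, we pass to the rescaled variable $\rho=r/r_n$ and set $V_n(\rho)=r_n^{\theta}V(r_n\rho)$. This gives
\begin{equation*}
	r_n^{-(\sigma+2)}\Big(V_n''+\frac{N-1}{\rho}V_n'\Big)=V_n^{1/m}\big(\rho^\sigma V_n^{(m-1)/m}-1\big).
\end{equation*}
The left-hand side carries a vanishing prefactor, so this is a singular perturbation. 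We expect to use monotonicity of $V_n$ (non-increasing) together with uniform bounds: the equation forces $V_n^{1/m}(\rho^\sigma V_n^{(m-1)/m}-1)$ to converge to $0$ in the distributional sense. By Helly's theorem, $V_n\to V_\infty$ a.e. with $V_\infty$ non-increasing, and the limit must satisfy $V_\infty\in\{0,\rho^{-\theta}\}$ pointwise. Monotonicity of $V_\infty$ then forces $V_\infty=\rho^{-\theta}$ on $(0,\rho_*)$ and $V_\infty=0$ beyond $\rho_*$.

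To exclude $\rho_*<\infty$, and in particular $\rho_*\le1$, we need an extra ingredient. Our first attempt is a direct local subsolution near $r_n$: a small bump $\underline{V}=\mu (R^2-|x-x_n|^2)_+^{k}$ on balls of radius $R\sim \delta r_n$, which is a subsolution of $-\Delta \underline V + |x|^\sigma\underline V\le \underline V^{1/m}$ when $\mu\approx (1-\eta)^{...} r_n^{-\theta}$. The sublinearity $V^{1/m}$ makes small multiples good subsolutions. Comparison on that ball, valid because $z\mapsto z^{1/m}/z$ is decreasing (a Brezis--Oswald type uniqueness/comparison for sublinear problems), then gives $V\ge\underline V$ and hence $W(r_n)\ge 1-\eta/2$ for large $n$, which is a contradiction. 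Combining both bounds yields~\eqref{beh.inf}.
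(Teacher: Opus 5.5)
Your route is different from the paper's. The paper analyzes the ODE for $W=r^{\theta}V$: eventual monotonicity of $W$ is read off at its critical points, $L=0$ is excluded by concavity, and $L^{1/m}=L$ is obtained along a sequence where $\rho W'(\rho)$ and $\rho^2W''(\rho)$ vanish. Your barrier approach has a genuine gap in the upper bound.

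\textbf{A sign slip.} Since $-\Delta V=V^{1/m}-r^\sigma V$ and $S$ is a strict supersolution, one gets $-\Delta(V-S)\le-\big([r^\sigma V-V^{1/m}]-[r^\sigma S-S^{1/m}]\big)-(\text{positive})<0$ where $V>S$. This is what contradicts $-(V-S)''\ge 0$ at an interior maximum; the inequality as you wrote it gives no contradiction.

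\textbf{The gap: the inner endpoint.} The maximum principle on $(R_\varepsilon,\infty)$ only excludes an \emph{interior} positive maximum of $V-S$. Nothing gives $V\le S$ at the inner endpoint $r=R_\varepsilon$. In Lemma~\ref{lem.tail2} this is arranged by taking $A\ge\|V\|_\infty R_1^\theta$, but here the constant is pinned to $(1+\varepsilon)^{m/(m-1)}$. Your argument therefore leaves open exactly the case that must be excluded: $V>S$ on all of $[R_\varepsilon,\infty)$ with $V-S\downarrow 0$, i.e.\ $W>(1+\varepsilon)^{m/(m-1)}$ for every $r\ge R_\varepsilon$.

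Replacing $S$ by $S+\delta$ does not touch this. It only changes the behavior at infinity, where $V-S\to0$ was never the issue. In the bad scenario, $V-S-\delta>0$ on some $[R_\varepsilon,b_\delta)$ with $b_\delta\to\infty$, which is fully consistent with the maximum principle. The $\varepsilon|x|^2$ term in Lemma~\ref{lem.tail2} serves to make the domain bounded, not to handle the inner boundary.

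\textbf{Two possible fixes.} You could use the barrier $S_B=(1+\varepsilon)^{m/(m-1)}r^{-\theta}+Br^{-\gamma}$ with $\gamma>\theta$. By concavity of $z\mapsto z^{1/m}$ and $S^{(1-m)/m}=r^\sigma/(1+\varepsilon)$, one has $r^\sigma S_B-S_B^{1/m}\ge\varepsilon S^{1/m}+\big(1-\frac{1}{m(1+\varepsilon)}\big)Br^{\sigma-\gamma}$. So $S_B$ is a supersolution on $[R_\varepsilon,\infty)$ uniformly in $B\ge 0$. Choosing $B$ with $S_B(R_\varepsilon)\ge\|V\|_\infty$ then closes the argument without changing the asymptotics.

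Alternatively, exclude the bad case directly. If $W\ge(1+\varepsilon)^{m/(m-1)}$ on $[R,\infty)$, then $(r^{N-1}V')'\ge c\,r^{N-1-\theta/m}$ there. Two integrations, using $V'\le 0$ and $V\to 0$, give either divergence or $V\ge c'r^{2-\theta/m}$, contradicting $V\le Cr^{-\theta}$.

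\textbf{The lower bound is sound.} It works once you drop the blow-down/Helly step, which does not conclude and is not needed. Take $\underline V=a\,r_n^{-\theta}\phi\big((x-x_n)/(\delta r_n)\big)$ with $\phi(y)=(1-|y|^2)_+^k$, $k\ge 2$, and $a(1+\delta)^\theta<1$; the amplitude is close to $r_n^{-\theta}$, not a small multiple. After dividing by $r_n^{-\theta/m}$, the subsolution inequality reads $a\delta^{-2}r_n^{-(\sigma+2)}(-\Delta\phi)+(1+\delta)^\sigma a\phi\le a^{1/m}\phi^{1/m}$.
\begin{itemize}
\item Where $-\Delta\phi\le 0$, it holds because $a(1+\delta)^\theta\le 1$.
\item Where $-\Delta\phi>0$, it holds because $\phi$ is bounded below there and $r_n^{-(\sigma+2)}\to 0$.
\end{itemize}
Sweeping $t\underline V$, $t\in(0,1]$, against $V>0$ on the closed ball gives $V\ge\underline V$. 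Hence $\liminf W\ge(1+\delta)^{-\theta}$ for every $\delta>0$, and this uses neither radial symmetry nor monotonicity.
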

\begin{proof}
We introduce
\begin{equation}\label{def.W}
	W(r):=V(r)r^{m\sigma/(m-1)}, \quad r\ge 0,
\end{equation}
and find by direct calculation that
\begin{equation*}
V'(r)=\left(rW'(r)-\frac{m\sigma}{m-1}W(r)\right)r^{-m\sigma/(m-1)-1}
\end{equation*}
and
\begin{equation*}
V''(r)=\left[r^2W''(r)-\frac{2m\sigma}{m-1}rW'(r)+\frac{m\sigma}{m-1}\left(1+\frac{m\sigma}{m-1}\right)W(r)\right]r^{-m\sigma/(m-1)-2}.
\end{equation*}
We thus find that $W$ solves the following differential equation:
\begin{equation}\label{W.ODE}
\begin{split}
r^2W''(r)&+\left(N-1-\frac{2m\sigma}{m-1}\right)rW'(r)+\frac{m\sigma}{m-1}\left(2-N+\frac{m\sigma}{m-1}\right)W(r)\\
&-r^{\sigma+2}W(r)+r^{\sigma+2}W^{1/m}(r)=0, \quad r\in (0,\infty).
\end{split}
\end{equation}

\medskip

\noindent \textbf{Step~1. Monotonicity for $r$ large.} In a first step, we prove that there is $R_0>0$ sufficiently large such that, either $W'>0$ on $(R_0,\infty)$, or $W'<0$ on $(R_0,\infty)$. In order to prove this claim, pick $R_0$ large enough (to be chosen later) and assume that $W$ has a point of maximum at $r_1>R_0$. Then $r_1W'(r_1)=0$ and $r_1^2W''(r_1)<0$ and it follows from~\eqref{W.ODE} that
\begin{equation*}
\begin{split}
	0&\leq\left(\frac{m\sigma}{m-1}\left(2-N+\frac{m\sigma}{m-1}\right)-r_1^{\sigma+2}\right)W(r_1)+r_1^{\sigma+2}W^{1/m}(r_1)\\
	&=r_0^{\sigma+2}W^{1/m}(r_1)\left[1-\left(1-\frac{C_5}{r_1^{\sigma+2}}\right)W^{(m-1)/m}(r_1)\right],
\end{split}
\end{equation*}
where
\begin{equation*}
	C_5 := \frac{m\sigma}{m-1}\left[2-N+\frac{m\sigma}{m-1}\right].
\end{equation*}
The previous estimate implies that, if $r_1>R_0>C_5^{1/(\sigma+2)}$ is a point of maximum for $W$, then
\begin{equation}\label{interm.max}
	W^{(m-1)/m}(r_1) \leq\frac{r_1^{\sigma+2}}{r_1^{\sigma+2}-C_5}.
\end{equation}
Similarly, if $W$ has a point of minimum at $r_0>R_0>C_5^{1/(\sigma+2)}$, then
\begin{equation}\label{interm.min}
	W^{(m-1)/m}(r_0) \geq \frac{r_0^{\sigma+2}}{r_0^{\sigma+2}-C_5}.
\end{equation}
Assume now for contradiction that $W$ has a point of minimum at $r_0>R_0>C_5^{1/(\sigma+2)}$ and then a point of maximum at $r_1>r_0$. Combining~\eqref{interm.max} and~\eqref{interm.min} gives
\begin{equation*}
W(r_1)^{(m-1)/m}\leq \frac{r_1^{\sigma+2}}{r_1^{\sigma+2}-C_5} \leq\frac{r_0^{\sigma+2}}{r_0^{\sigma+2}-C_5}\leq W(r_0)^{(m-1)/m},
\end{equation*}
which implies that $W$ is constant on $(r_0,r_1)$ and thus (by standard theory of ordinary differential equations) $W$ is constant for $r\in(0,\infty)$. This implies that, either $W\equiv1$, which is a contradiction since $W(0)=0$ by construction, or $W\equiv 0$, which contradicts $V\not\equiv 0$. Therefore, $W(r)$ cannot have an oscillatory behavior as $r\to\infty$ and we have thus established that there is $R_0>0$ such that, either $W'>0$ on $(R_0,\infty)$, or $W'<0$ on $(R_0,\infty)$.

\medskip

\noindent \textbf{Step 2. Behavior at infinity.} We infer from the previous step that there exists
\begin{equation*}
	L:=\lim\limits_{r\to\infty}W(r)\in [0,\infty].
\end{equation*}
The goal of this step is to prove that $L=1$. To this end, we want to rule out first the possibility that either $L=0$ or $L=\infty$. The latter is actually discarded by Proposition~\ref{prop.exrad}, so that we turn to the proof of the former by a contradiction argument, as indicated below.

Assume for contradiction that $L=0$. This fact, together with Step~1, entails that $W'<0$ on $(R_0,\infty)$ for some $R_0>0$. We deduce from Eq.~\eqref{W.ODE} that
\begin{equation*}
\begin{split}
	r^2W''(r)&=r^{\sigma+2}W^{1/m}(r) \left[ W^{(m-1)/m}(r) - 1 - C_5 r^{-(\sigma+2)} W^{(m-1)/m}(r) \right]\\
	& \quad + \left(\frac{2m\sigma}{m-1}-N+1\right)rW'(r).
\end{split}
\end{equation*}
Since $\sigma>\sigma_0$, we find that
\begin{equation*}
	\frac{2m\sigma}{m-1}-N+1>\frac{2mN(m-1)}{(m-1)(m+1)}-N+1=\frac{N(m-1)}{m+1}+1>0,
\end{equation*}
and it readily follows from the previous equality and the assumption $L=0$ that there exist $\delta>0$ and $R_1>R_0>0$ such that
\begin{equation*}
	r^2W''(r)\leq-\delta, \quad r\in(R_1,\infty).
\end{equation*}
This is equivalent to
\begin{equation}\label{interm8}
	W''(r)\leq-\frac{\delta}{r^2}, \quad r\in(R_1,\infty),
\end{equation}
and, by integrating twice~\eqref{interm8} over $(R_1,\infty)$, we easily deduce that
\begin{equation*}
	W(r) \leq W(R_1) + \left( W'(R_1) -\frac{\delta}{R_1} \right)(r-R_1) + \delta\ln\left(\frac{r}{R_1}\right), \quad r>R_1.
\end{equation*}
Recalling that $W'(R_1)\le 0$, we readily infer from the above inequality that $W(r)<0$ for $r>R_1$ sufficiently large, in contradiction to the positivity of $W$.


We have thus shown that $L\in(0,\infty)$. We now argue exactly as in \cite[Lemma~2.10]{IL13} and, since $|W'|\in L^1((R_0,\infty))$, we infer that there exists a sequence $\{r_k\}_{k\geq1}$ such that $r_k\to\infty$ and $r_kW'(r_k)\to 0$ as $k\to\infty$. Applying \cite[Lemma~2.9]{IL13} to the function $r\mapsto rW'(r)$ if $W$ is non-decreasing on $(R_0,\infty)$ or to $r\mapsto -rW'(r)$ otherwise, we deduce that there is a sequence $\{\rho_k\}_{k\geq1}$ such that $\rho_k\to\infty$, $\rho_k W'(\rho_k)\to 0$ and $\rho_k(\rho_k W''(\rho_k)+W'(\rho_k))\to 0$ as $k\to\infty$. The latter limit also ensures that $\rho_k^2W''(\rho_k)\to 0$ as $k\to\infty$. We next obtain by evaluating \eqref{W.ODE} at $r=\rho_k$ and passing to the limit as $k\to\infty$ that $L^{1/m}=L$ and, since $L\in(0,\infty)$, it follows that $L=1$. The claimed behavior~\eqref{beh.inf} follows then from~\eqref{def.W}.
\end{proof}
We are now in a position to complete the proof of Theorem~\ref{th.uniq} within the class of radially symmetric functions with non-increasing profile.
	
\begin{proposition}\label{prop.exuniq}
Let $m$ and $\sigma$ be as in~\eqref{range.exp}. There exists a unique non-trivial radially symmetric solution $V_*\in\mathcal{Y}_+$ to Eq.~\eqref{eq.stat} with non-increasing profile. Moreover, $V_*$ is positive in $\mathbb{R}^N$,  belongs to $L^{\infty}(\mathbb{R}^N)\cap C^{2,\alpha}(\mathbb{R}^N)$ for any $\alpha\in (0,1/m)$ and satisfies~\eqref{beh.inf}.
\end{proposition}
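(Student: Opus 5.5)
Existence, positivity, regularity and the asymptotics~\eqref{beh.inf} are already given by Proposition~\ref{prop.exrad} and Lemma~\ref{lem.beh.inf}. So the real content is uniqueness, and I plan to prove it with a Wronskian-type argument for the radial ODE~\eqref{rad.ODE}. Let $V_1, V_2$ be two non-trivial radially symmetric solutions in $\mathcal{Y}_+$ with non-increasing profiles. Both are positive, $C^2$, satisfy $V_i'(0)=0$, and obey $r^{m\sigma/(m-1)}V_i(r)\to 1$.

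The key structural fact is that $V\mapsto V^{1/m}/V = V^{-(m-1)/m}$ is strictly decreasing, which is a sublinear (Brezis--Oswald type) nonlinearity. I would exploit it through the identity obtained by multiplying the equation for $V_1$ by $(V_1^2-V_2^2)/V_1$, the equation for $V_2$ by $(V_2^2-V_1^2)/V_2$, adding, and integrating over $B(0,R)$. The potential terms $|x|^\sigma V_i$ cancel exactly. The Picone-type identity then gives
\begin{equation*}
\int_{B(0,R)}\Big[\big|\nabla V_1 - \tfrac{V_1}{V_2}\nabla V_2\big|^2+\big|\nabla V_2-\tfrac{V_2}{V_1}\nabla V_1\big|^2\Big]dx + \int_{B(0,R)}\big(V_2^{-(m-1)/m}-V_1^{-(m-1)/m}\big)(V_2^2-V_1^2)\,dx = \mathcal{B}_R,
\end{equation*}
where $\mathcal{B}_R$ is the boundary term on $\partial B(0,R)$. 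The second integrand is $\ge 0$, with equality only where $V_1=V_2$.

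The main obstacle is showing that $\mathcal{B}_R\to0$ along $R\to\infty$. In radial form,
\begin{equation*}
\mathcal{B}_R=c_N R^{N-1}\Big[V_1'(R)\frac{V_1^2-V_2^2}{V_1}(R)+V_2'(R)\frac{V_2^2-V_1^2}{V_2}(R)\Big].
\end{equation*}
The quotients are controlled by the asymptotics $V_i(R)\sim R^{-\theta}$ with $\theta=m\sigma/(m-1)$, since then $V_1/V_2\to1$ and $|V_1^2-V_2^2|/V_i\lesssim R^{-\theta}$. I also need the decay $|V_i'(R)|=O(R^{-\theta-1})$, at least along a sequence $R_k\to\infty$. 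For this I would reuse the argument at the end of the proof of Lemma~\ref{lem.beh.inf}: $W_i=r^\theta V_i$ is eventually monotone and converges, so $rW_i'(r)\to0$ along a sequence. Alternatively, I would integrate $(r^{N-1}V')'=r^{N-1}(r^\sigma V - V^{1/m})$ using the asymptotics. With these bounds, $|\mathcal{B}_R|\lesssim R^{N-1-2\theta-1}$, and $N-2-2\theta<0$ because $\theta>\sigma>\sigma_0$ together with the relation $2\theta>N$. That relation should follow from $\sigma>\sigma_0$: it reads $2m\sigma/(m-1) > 2mN/(m+1)\ge N$. Near the origin there is no singularity, since the $V_i$ are positive and $C^1$.

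Letting $R=R_k\to\infty$, both nonnegative integrals vanish, so $V_1\equiv V_2$. If the flux bound along a common sequence is delicate, the fallback is to use a cutoff $\psi(x/R)$ in the weak formulation; the error terms are controlled by the same decay of $V_i$ and $\nabla V_i$.
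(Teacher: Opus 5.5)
Your proof is correct, but it takes a genuinely different route from the paper's.

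\textbf{The paper's route.} The paper proves uniqueness by the Ye--Yin optimal scaling barrier. Assuming $V_2(0)<V_1(0)$, it considers $W_\lambda(r)=\lambda^{-2m/(m-1)}V_2(\lambda r)$, which solves the radial equation with weight $\lambda^{\sigma+2}r^\sigma$. It uses the monotone profile of $V_2$ and the asymptotics of Lemma~\ref{lem.beh.inf} to place $W_\lambda$ strictly above $V_1$ for small $\lambda$. At the optimal $\lambda_0\in(0,1)$, it excludes a contact point $r>0$ by comparing the two ODEs there. It excludes contact at $r=0$ by invoking local expansions of radial solutions near the origin, imported from earlier work. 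That argument relies on radial symmetry, on the monotonicity of the profile (to order the family $W_\lambda$) and on the exact scaling structure of the equation. It is also not self-contained at the origin.

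\textbf{What your route buys.} Your Brezis--Oswald/D\'{\i}az--Saa argument uses only the sublinearity of $V\mapsto V^{1/m}$. The weight $|x|^\sigma$ cancels identically, and no analysis at $r=0$ is needed. The only input from infinity is that $V_1/V_2$ is bounded above and below with $V_i\lesssim r^{-\theta}$, plus a flux bound. With a cutoff and $\nabla V_i\in L^2$ (note $\theta>N/2$), the same computation would even handle non-radial solutions, provided a two-sided tail bound $V\asymp|x|^{-\theta}$ were available. The paper only has the upper bound of Lemma~\ref{lem.tail2}, which is why it obtains full uniqueness through the parabolic flow in Section~\ref{sec.cv}.

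\textbf{Two small points to fix.}
\begin{itemize}
\item The second integrand in your identity should be $(V_2^{-(m-1)/m}-V_1^{-(m-1)/m})(V_1^2-V_2^2)$. As you wrote it, with the factor $V_2^2-V_1^2$, it is non-positive.
\item The common sequence for the flux follows directly from monotonicity; you need neither the sequences of Lemma~\ref{lem.beh.inf} nor a cutoff. Since $V_i'\le 0$, you have $\int_R^{2R}(|V_1'|+|V_2'|)\,dr=\sum_i (V_i(R)-V_i(2R))\lesssim R^{-\theta}$. Hence some $r_R\in[R,2R]$ satisfies $|V_1'(r_R)|+|V_2'(r_R)|\lesssim R^{-\theta-1}$, which gives $|\mathcal{B}_{r_R}|\lesssim R^{N-2-2\theta}\to 0$. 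Since the left-hand side of your identity is non-decreasing in the radius, it then vanishes on every ball.
\end{itemize}
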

\begin{proof}
The existence and regularity statements in Proposition~\ref{prop.exuniq} follow from Proposition~\ref{prop.exrad}, while the precise behavior~\eqref{beh.inf} for large values of $|x|$ is provided by Lemma~\ref{lem.beh.inf}. We are thus left with proving the uniqueness statement. To this end, we employ an optimal barrier argument stemming from \cite{YeYin}, which is completely similar to the one used in \cite{ILS24, IM26}. We give a sketch of the proof for the sake of completeness. Assume for contradiction that there exist two solutions $V_1$ and $V_2$ to~\eqref{eq.stat} with the properties described in Proposition~\ref{prop.exuniq}. Owing to their radial symmetry, $V_1$ and $V_2$ are actually two solutions to~\eqref{rad.ODE}. We may also assume that $V_2(0)<V_1(0)$ (otherwise, since $V_1'(0)=V_2'(0)=0$ by radial symmetry, we have $V_1\equiv V_2$). For $\lambda>0$, we set
\begin{equation*}
	W_{\lambda}(r)=\lambda^{-2m/(m-1)}V_2(\lambda r), \quad r\in[0,\infty),
\end{equation*}
and deduce by direct calculation that $W_{\lambda}$ is a solution to
\begin{equation}\label{eq.resc}
	W_{\lambda}''(r)+\frac{N-1}{r}W_{\lambda}'(r)+W_{\lambda}^{1/m}(r)-\lambda^{\sigma+2} r^\sigma W_{\lambda}(r)=0, \quad r\in (0,\infty).
\end{equation}
Moreover, we infer from~\eqref{beh.inf} (which is satisfied by $V_2$) that
\begin{equation}\label{interm10}
\begin{split}
	\lim\limits_{r\to\infty}r^{m\sigma/(m-1)}W_{\lambda}(r)&=\lim\limits_{r\to\infty}(\lambda r)^{m\sigma/(m-1)}\lambda^{-2m/(m-1)-m\sigma/(m-1)}V_2(\lambda r)\\
	&=\lambda^{-m(\sigma+2)/(m-1)}.
\end{split}
\end{equation}
Picking $\lambda\in(0,1)$, we obtain from~\eqref{beh.inf} (which is satisfied by  $V_1$) and~\eqref{interm10} that there is $r_{\lambda}>0$ such that
\begin{equation}\label{interm11}
	W_{\lambda}(r)>V_1(r), \quad r\in(r_{\lambda},\infty),
\end{equation}
and we take the smallest value of $r_{\lambda}$ for which~\eqref{interm11} holds true. Since, for $\lambda'<\lambda$, the monotonicity of $V_2$ implies that
\begin{equation*}
	(\lambda')^{-2m/(m-1)}>\lambda^{-2m/(m-1)}, \quad V_2(\lambda'r)\geq V_2(\lambda r),
\end{equation*}
we infer that $W_{\lambda'}(r)>W_{\lambda}(r)$ for any $r\in[0,\infty)$ and thus $r_{\lambda'}<r_{\lambda}$. Moreover, 
\begin{equation*}
	\lim\limits_{\lambda\to0}\inf\limits_{[0,r_{1/2}]} W_{\lambda} = \infty,
\end{equation*}
so that there is $\lambda_*\in (0,1/2)$ such that
\begin{equation*}
	\inf_{[0,r_{1/2}]} W_{\lambda} > \max_{[0,r_{1/2}]} V_1, \quad \lambda\in (0,\lambda_*).
\end{equation*}
Since
\begin{equation*}
	W_{\lambda}(r) > W_{\lambda_*}(r) > W_{1/2}(r) > V_1(r), \quad (r,\lambda)\in (r_{1/2},\infty)\times (0,\lambda_*),
\end{equation*}
we conclude that $W_{\lambda}>V_1$ in $[0,\infty)$ for $\lambda>0$ small enough. We can thus introduce the optimal parameter
\begin{equation*}
	\lambda_0:=\sup\{\lambda\in(0,1): W_{\lambda}>V_1 \ \text{ in } \ [0,\infty)\},
\end{equation*}
and observe that $\lambda_0\in(0,1)$, since $V_2(0)<V_1(0)$. Assume for contradiction that $r_{\lambda_0}>0$. It then follows by the definitions of $\lambda_0$ and $r_{\lambda_0}$ that
\begin{equation*}
	W_{\lambda_0}(r)\geq V_1(r), \quad r\in[0,\infty), \quad W_{\lambda_0}(r_{\lambda_0})=V_1(r_{\lambda_0}),
\end{equation*}
which also means that $r_{\lambda_0}$ is a point of minimum for $W_{\lambda_0}-V_1$ and thus
\begin{equation*}
	W_{\lambda_0}'(r_{\lambda_0})=V_1'(r_{\lambda_0}), \quad W_{\lambda_0}''(r_{\lambda_0})\geq V_1''(r_{\lambda_0}).
\end{equation*}
Evaluating~\eqref{eq.resc} (with the solution $W_{\lambda_0}$) and~\eqref{rad.ODE} (with the solution $V_1$) at $r=r_{\lambda_0}$, we deduce that
\begin{equation*}
	0\geq(1-\lambda_0^{\sigma+2})r_{\lambda_0}^\sigma V_1(r_{\lambda_0}),
\end{equation*}
which contradicts the strict positivity of $V_1$. Consequently, $r_{\lambda_0}=0$ and the optimality of $\lambda_0$ ensures that
\begin{equation*}
W_{\lambda_0}(0)=\lambda_0^{-2m/(m-1)}V_2(0)=V_1(0).
\end{equation*}
Arguing as in \cite[Lemma~4.12]{ILS24} and using the expansions of $V_1$ and $V_2$ as $r\to 0$ (which are the same as the ones derived in \cite[Lemmas~4.2 and~4.3]{ILS24}), we discard this possibility and conclude the proof of the uniqueness.
\end{proof}

\section{Convergence}\label{sec.cv}

This section is dedicated to the proof of Theorem~\ref{th.conv} and to the last part of Theorem~\ref{th.uniq}, establishing uniqueness of the non-trivial stationary solution without the assumption of radial symmetry with non-increasing profile. To this end, given an initial condition $u_0\in L^\infty_+(\mathbb{R}^N)\cap \mathcal{X}$, let $v$ be the corresponding solution to~\eqref{CP2} and denote its $\omega$-limit set by $\omega(u_0)$, which is defined as follows: we say that $V\in\omega(u_0)$ if there is a sequence $\{s_j\}_{j\geq1}$ such that
\begin{equation}\label{omega.limit}
\lim\limits_{j\to\infty}s_j=\infty, \quad \lim\limits_{j\to\infty}\|v^m(s_j)-V\|_{2}=0.
\end{equation}
Our goal is thus to characterize the elements of $\omega(u_0)$ for suitable initial conditions $v_0\in\mathcal{X}$ and is achieved by the following result.

\begin{proposition}\label{prop.omega}
Let $m$ and $\sigma$ be as in \eqref{range.exp} and $u_0\in L^\infty_+(\mathbb{R}^N)\cap \mathcal{X}$. The set $\omega(u_0)$ is non-empty and any element $V\in\omega(u_0)$ is a non-trivial weak solution to the elliptic equation~\eqref{eq.stat}, which satisfies
\begin{equation}
	V^{1/m}(x) \ge \left( \frac{e^{(m-1)s} - 1}{e^{(m-1)s}} \right)^{1/(m-1)} v(s,x), \quad (s,x)\in (0,\infty)\times \mathbb{R}^N. \label{fg01}
\end{equation}
Moreover, if $u_0$ is radially symmetric with non-increasing profile, then any element $V\in\omega(u_0)$ is radially symmetric with non-increasing profile as well.
\end{proposition}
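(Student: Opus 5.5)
The plan is to combine the uniform bounds of Proposition~\ref{prop.Lyap} with the dissipation inequality~\eqref{diss}, and to use the monotonicity~\eqref{x11} both for compactness and to rule out the trivial limit. The proof has four steps: compactness, identification of the limit equation, non-triviality via the lower bound~\eqref{fg01}, and preservation of symmetry.

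\textbf{Step 1: compactness.} By~\eqref{ubv}, the family $\{v^m(s)\}_{s\ge s_1}$, with $s_1=\ln m/(m-1)$, is bounded in $\dot H^1(\mathbb{R}^N)\cap L^2_\sigma(\mathbb{R}^N)$. As in the proof of Proposition~\ref{prop.exrad}, this set is relatively compact in $L^2(\mathbb{R}^N)$: local compactness follows from Rellich's theorem, and the tails are controlled by $R^{-\sigma}N_\sigma^2$ because $\sigma>0$. Hence for any $s_j\to\infty$ there is a subsequence with $v^m(s_j)\to V$ in $L^2$. So $\omega(u_0)\neq\emptyset$, and $V\in\dot H^1\cap L^2_\sigma$ by weak lower semicontinuity. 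Since $v$ is bounded in $L^\infty$ by~\eqref{ubv}, interpolation gives $v(s_j)\to V^{1/m}$ in $L^r$ for all $r\in[2/m,\infty)\cap[1,\infty)$ locally. Lemma~\ref{lem.tail} then upgrades this to convergence of $v^m(s_j)$ in $L^{(m+1)/m}$, so $V\in\mathcal{Y}_+$.

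\textbf{Step 2: $V$ is a stationary solution.} I consider the time-shifted sequence $v_j(\tau,x):=v(s_j+\tau,x)$ for $\tau\in[0,1]$. The dissipation bound in~\eqref{diss}, together with the lower bound on $\mathcal{I}$ from Lemma~\ref{lem.lower}, gives $\int_{s_j}^{s_j+1}\|\partial_s v^{(m+1)/2}\|_2^2\,ds\to0$. It follows that $\sup_{\tau\in[0,1]}\|v_j^{(m+1)/2}(\tau)-v_j^{(m+1)/2}(0)\|_2\to0$. Combined with the uniform $L^\infty$ bound and the elementary inequality $|a-b|^{(m+1)/2}\le|a^{(m+1)/2}-b^{(m+1)/2}|$, this shows that $v_j\to V^{1/m}$ in $L^{m+1}$ uniformly for $\tau\in[0,1]$. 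I then pass to the limit in the weak formulation of~\eqref{eq2} on $(0,1)\times\mathbb{R}^N$ with test functions $\zeta(x)\eta(\tau)$. The term with $\partial_\tau\eta$ vanishes in the limit since $\int_0^1\eta'\,d\tau=0$. The gradient term converges weakly because $\nabla v_j^m$ is bounded in $L^2$, and the remaining terms converge by local strong convergence. This yields that $V$ is a weak solution of~\eqref{eq.stat}. This is the step I expect to be the main obstacle: the dissipation controls $v^{(m+1)/2}$ rather than $v^m$, and one must make sure the limit is the same $V$ on the whole time window, which is exactly what the elementary inequality handles.

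\textbf{Step 3: non-triviality and~\eqref{fg01}.} Fix $s>0$. By~\eqref{x11}, for $s_j>s$,
\[
v(s_j,x)\ge\Big(\tfrac{e^{(m-1)s_j}}{e^{(m-1)s_j}-1}\Big)^{1/(m-1)}\Big(\tfrac{e^{(m-1)s}-1}{e^{(m-1)s}}\Big)^{1/(m-1)}v(s,x).
\]
Passing to the a.e. limit along a further subsequence, and using that the first factor tends to $1$, gives~\eqref{fg01}. Since $u_0\not\equiv0$, Proposition~\ref{prop.v} gives $v(s)\not\equiv0$, so $V\not\equiv0$.

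\textbf{Step 4: symmetry.} For radially symmetric non-increasing $u_0$, Lemma~\ref{lem.rad} together with~\eqref{resc.var} shows that each $v(s)$ is radially symmetric and non-increasing. Both properties are preserved under a.e. limits, so every $V\in\omega(u_0)$ inherits them.
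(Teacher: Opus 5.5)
Your proposal is correct and takes essentially the same route as the paper. It uses compactness in $L^2$ from the bounds \eqref{ubv}, vanishing of the dissipation on the windows $(s_j,s_j+1)$ to pass to the limit in the weak formulation, the monotonicity \eqref{x11} for \eqref{fg01} and non-triviality, and Lemma~\ref{lem.rad} for symmetry. The one cosmetic difference is that you transfer the dissipation control to $v_j$ in $L^{m+1}$ via $|a-b|^{(m+1)/2}\le|a^{(m+1)/2}-b^{(m+1)/2}|$, whereas the paper transfers it to $v_j^m$ in $L^2$ via a mean-value inequality combined with the $L^\infty$ bound.
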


\begin{proof}
We deduce from Proposition~\ref{prop.Lyap} and~\eqref{ubv} that $v(s)\in L^\infty_+(\mathbb{R}^N)\cap \mathcal{X}$ for any $s\geq0$ and that $\{v^m(s)\}_{s\geq0}$ is uniformly bounded in $\dot{H^1}(\mathbb{R}^N)\cap L^{2}_{\sigma}(\mathbb{R}^N)\cap L^{(m+1)/m}(\mathbb{R}^N)$. On the one hand, since $(m+1)/m<2$, it follows that $\dot{H^1}(\mathbb{R}^N)\cap L^{(m+1)/m}(\mathbb{R}^N)$ embeds continuously in $H^1(\mathbb{R}^N)$. On the other hand, since $\sigma>\sigma_0>0$, $H^1(\mathbb{R}^N)\cap L^2_{\sigma}(\mathbb{R}^N)$ is compactly embedded in $L^2(\mathbb{R}^N)$. Putting together these embeddings, we infer that the set $\{v^m(s)\}_{s\geq0}$ is relatively compact in $L^2(\mathbb{R}^N)$, from which we readily deduce that $\omega(u_0)$ is non-empty.

Pick $V\in\omega(u_0)$ and let $\{s_j\}_{j\geq1}$ be a sequence such that~\eqref{omega.limit} holds true. To show that $V$ solves~\eqref{eq.stat}, we proceed along the lines of \cite[Theorem~1.1]{LP85} and first observe that Lemma~\ref{lem.lower} and~\eqref{diss} imply that
\begin{equation*}
\begin{split}
	\int_0^{s}\Big\|\partial_s\big(v^{(m+1)/2}\big)(s_*)\Big\|_2^2\,ds_* & \leq \frac{(m+1)^2}{4m} \left[\mathcal{I}(u_0)-\mathcal{I}(v(s))\right] \\
	& \le \frac{(m+1)^2}{4m} \left[\mathcal{I}(u_0) + \mathcal{I}_0 \right]
\end{split}
\end{equation*}
for any $s\geq 0$, from which we readily deduce that 
\begin{equation}\label{interm13}
	K_1 := \int_0^{\infty}\Big\|\partial_s\big(v^{(m+1)/2}\big)(s)\Big\|_2^2\,ds < \infty.
\end{equation}
Set now
\begin{equation*}
v_j(s):=v(s+s_j), \quad s\in[0,1], \quad j\geq1.
\end{equation*}
On the one hand, the following numerical inequality, valid for any $(X,Y)\in[0,\infty)^2$ since $m>(m+1)/2$,
\begin{align*}
	\big|X^m - Y^m\big| & \le \frac{2m}{m+1} \max\left\{ X^{(m+1)/2} , Y^{(m+1)/2} \right\}^{(m-1)/(m+1)} \left| X^{(m+1)/2} - Y^{(m+1)/2} \right| \\
	& = \frac{2m}{m+1} \max\left\{X,Y \right\}^{(m-1)/2} \left| X^{(m+1)/2} - Y^{(m+1)/2} \right|
\end{align*}
implies that, for any $s\in[0,1]$, $j\geq1$ and $x\in\mathbb{R}^N$,
\begin{align*}
	\big|v_j^m(s,x) - v_j^m(0,x)\big|&\leq\frac{2m}{m+1} \max\left\{ \|v_j(s)\|_\infty,\|v_j(0)\|_\infty \right\}^{(m-1)/2}\\
	&\quad \times\big|v_j^{(m+1)/2}(s,x) - v_j^{(m+1)/2}(0,x)\big| \\
	&\leq\frac{2m}{m+1} \max\left\{ \|v(s_j+s)\|_\infty,\|v(s_j)\|_\infty \right\}^{(m-1)/2}\\
	&\quad \times\int_{s_j}^{s_j+s} \left| \partial_s \left( v^{(m+1)/2} \right)(s_*,x) \right|\,ds_*.
\end{align*}
The uniform boundedness~\eqref{ubv} of $v$ in $L^{\infty}((\ln{m}/(m-1),\infty)\times\mathbb{R}^N)$, together with the previous estimate and an application of H\"older's inequality, further gives
\begin{equation*}
	\sup_{s\in [0,1]} \big\| v_j^m(s) - v^m(s_j) \big\|_2 \le K_2\left( \int_{s_j}^{\infty} \left\| \partial_s \left( v^{(m+1)/2} \right)(s_*,x) \right\|_2^2\ ds_* \right)^{1/2},
\end{equation*}
with $K_2 := 2C_2(u_0)^{(m-1)/2}$, where we recall that the constant $C_2(u_0)$ is introduced in~\eqref{ubv}. Therefore, by~\eqref{omega.limit} and~\eqref{interm13},
\begin{equation}\label{conv1}
	\lim_{j\to\infty}\sup_{s\in [0,1]}\big\|v_j^m(s)-V\big\|_2 = 0.
\end{equation}
Moreover, combining~\eqref{conv1} and the numerical inequality
\begin{equation}\label{num.ineq}
	|X-Y|^m \le|X^m-Y^m|, \quad (X,Y)\in[0,\infty)^2,
\end{equation}
which is valid due to $m>1$, gives
\begin{equation}\label{conv3}
	\lim_{j\to\infty} \sup_{s\in [0,1]} \big\|v_j(s) - V^{1/m}\big\|_{2m} = 0.
\end{equation}
On the other hand, the uniform boundedness~\eqref{ubv} of $v^m$ in $\dot{H}^1(\mathbb{R}^N)\cap L^2_{\sigma}(\mathbb{R}^N)$ ensures that $\big\{\nabla v_j^m\big\}_{j\geq1}$ and $\{v_j^m\}_{j\geq1}$ are uniformly bounded in $L^{\infty}((0,1),L^2(\mathbb{R}^N))$ and in $L^{\infty}((0,1),L^2_{\sigma}(\mathbb{R}^N))$, respectively. We infer from these bounds and~\eqref{conv1} that there exists a subsequence of $\{v_j\}_{j\geq1}$ (not relabeled for simplicity) such that
\begin{equation}\label{conv2}
\begin{split}
	\nabla v^m(s_j) \rightharpoonup \nabla V & \;\;\text{ in }\;\; L^2(\mathbb{R}^N), \\
	\nabla v^m_j \stackrel{*}{\rightharpoonup} \nabla V & \;\;\text{ in }\;\; L^\infty((0,1),L^2(\mathbb{R}^N)), \\
	v^m_j \stackrel{*}{\rightharpoonup} V & \;\;\text{ in }\;\; L^\infty((0,1),L_\sigma^2(\mathbb{R}^N)).
\end{split}
\end{equation}

We finally write the weak formulation of Eq.~\eqref{eq2} on $(s_j,s_j+1)$ with $\vartheta\in C_c^{\infty}(\mathbb{R}^N)$ as test function, which yields
\begin{align*}
 	\int_{\mathbb{R}^N} \big[v_j(1,x) - v_j(0,x)] \vartheta(x)\ dx & + \int_0^1 \int_{\mathbb{R}^N} \nabla v_j^m(s,x)\cdot \nabla\vartheta\ dxds \\
 	& \quad = \int_0^1 \int_{\mathbb{R}^N} \left[v_j(s,x)-|x|^\sigma v_j^m(s,x)\right] \vartheta(x)\ dxds.
 \end{align*}
We can thus pass to the limit as $j\to\infty$ in the previous identity and deduce from \eqref{conv1}, \eqref{conv2} and \eqref{conv3} that
\begin{align*}
\int_{\mathbb{R}^N}\nabla V(x)\cdot\nabla\vartheta(x)\,dxds =\int_{\mathbb{R}^N}\left[V^{1/m}(x)- |x|^\sigma V(x)\right]\vartheta(x)\,dxds,
\end{align*}
whence $V$ is a weak solution to~\eqref{eq.stat}. In order to check that $V\not\equiv 0$, we fix $s_0>0$ and recall the time monotonicity~\eqref{x11}
\begin{equation*}
	\left( \frac{e^{(m-1)s_j} - 1}{e^{(m-1)s_j}} \right)^{1/(m-1)} v(s_j,x) \ge \left( \frac{e^{(m-1)s_0} - 1}{e^{(m-1)s_0}} \right)^{1/(m-1)} v(s_0,x), \quad x\in\mathbb{R}^N,
\end{equation*}
which is valid for any $j\ge 1$ such that $s_j>s_0$. We then let $j\to\infty$ in the above inequality and deduce from the convergence~\eqref{conv3} that
\begin{equation*}
	V^{1/m}(x) \ge \left( \frac{e^{(m-1)s_0} - 1}{e^{(m-1)s_0}} \right)^{1/(m-1)} v(s_0,x), \quad x\in\mathbb{R}^N,
\end{equation*}
which readily implies~\eqref{fg01}, as well as $V\not\equiv 0$, since $v(s_0)\not\equiv 0$ for all $s_0>0$ by Proposition~\ref{prop.v}.

Finally, if $u_0$ is radially symmetric with non-increasing profile, then it follows from Lemma~\ref{lem.rad} that $v(s)$ shares the same properties for $s\geq0$ and thus $V$ is radially symmetric with non-increasing profile, completing the proof.
\end{proof}

We are only left to finish the proofs of Theorems~\ref{th.uniq} and~\ref{th.conv}, which strongly rely on Proposition~\ref{prop.omega} and the detailed study of the elliptic equation~\eqref{eq.stat} performed in Section~\ref{sec.stso}.

\begin{proof}[Proof of Theorems~\ref{th.uniq} and~\ref{th.conv}]
Several steps are needed to be able to handle arbitrary initial conditions $u_0$ in $L^\infty_+(\mathbb{R}^N)\cap \mathcal{X}$.

\medskip

\noindent\textbf{Step~1.} Assume first that $u_0\in L^\infty_+(\mathbb{R}^N)\cap \mathcal{X}$ is a radially symmetric initial condition with non-increasing profile. It follows from Proposition~\ref{prop.omega} that any element $V\in\omega(u_0)$ is a radially symmetric function with non-increasing profile in $L^\infty_+(\mathbb{R}^N)\cap \mathcal{X}$, as well as a non-trivial weak solution to~\eqref{eq.stat}. Proposition~\ref{prop.exuniq} then entails that $\omega(u_0)=\{V_*\}$ is a singleton and a classical compactness argument leads us to the convergence
\begin{equation}
	\lim_{s\to\infty} \|v^m(s) - V_*\|_2 = 0. \label{conv4}
\end{equation}

\medskip

\noindent\textbf{Step~2.} Assume now that $u_0\in L^\infty_+(\mathbb{R}^N)\cap \mathcal{X}$ is such that there are two radially symmetric initial conditions with non-increasing profile $u_{0,b}\in L^\infty_+(\mathbb{R}^N)\cap \mathcal{X}$ and $u_{0,a}\in L^\infty_+(\mathbb{R}^N)\cap \mathcal{X}$ satisfying $u_{0,b} \le u_0 \le u_{0,a}$ in $\mathbb{R}^N$. Denoting the corresponding solutions to~\eqref{CP2} by $v_{b}$ and $v_{a}$, respectively, the comparison principle recalled in Proposition~\ref{prop.wp} implies that
\begin{equation*}
	v_{b}(s,x) \le v(s,x) \le v_{a}(s,x), \quad (s,x)\in (0,\infty)\times\mathbb{R}^N,
\end{equation*}
so that
\begin{align*}
	\|v^m(s)-V_*\|_2^2 & = \big\|\big(v^m(s)-V_*\big)_+\big\|_2^2 + \big\|\big(V_*-v^m(s)\big)_+\big\|_2^2 \\
	& \le \big\|\big(v_{a}^m(s)-V_*\big)_+\big\|_2^2 + \big\|\big(V_*-v_{b}^m(s)\big)_+\big\|_2^2 \\
	& \le \|v_{a}^m(s)-V_*\|_2^2 + \|v_{b}^m(s)-V_*\|_2^2.
\end{align*}
Since $v_{b}$ and $v_{a}$ satisfy~\eqref{conv4} by \textbf{Step~1}, an immediate consequence of the above inequality is that $\{v^m(s)\}_{s>0}$ converges to $V_*$ in $L^2(\mathbb{R})$; that is, $\omega(u_0)=\{V_*\}$.

\medskip

\noindent\textbf{Step~3.} We now complete the proof of Theorem~\ref{th.uniq}. Let $V\in\mathcal{Y}_+$ be a non-trivial stationary solution to~\eqref{eq.stat}. Then $V\in L^\infty(\mathbb{R}^N)$ by Proposition~\ref{prop.br}, so that $V^{1/m}$ is a non-trivial function in $L^\infty_+(\mathbb{R}^N)\cap \mathcal{X}$ and a stationary solution to~\eqref{eq2}. Moreover, the positivity of $V$ established in Proposition~\ref{prop.br} guarantees that there is a radially symmetric initial condition with non-increasing profile $u_{0,b}\in L^\infty_+(\mathbb{R}^N)\cap \mathcal{X}$ such that $u_{0,b}\le V^{1/m}$ in $\mathbb{R}^N$. Also, owing to Proposition~\ref{prop.br} and Lemma~\ref{lem.tail2}, there is $C>0$ depending on $N$, $m$, $\sigma$ and $\|V\|_\infty$ such that $V^{1/m}(x) \le C u_{0,a}(x)$ with $u_{0,a}(x) := (1+|x|)^{-\sigma/(m-1)}$ for $x\in\mathbb{R}^N$. Taking into account that $\sigma>\sigma_0$, we realize that $u_{0,a}$ belongs to $L^\infty_+(\mathbb{R}^N)\cap \mathcal{X}$ and is non-negative and radially symmetric with non-increasing profile. We are then in a position to apply \textbf{Step~2} to conclude that $V=V_*$, thereby completing the proof of Theorem~\ref{th.uniq} with $\varphi=V_*^{1/m}$.

\medskip

\noindent\textbf{Step~4.} Consider now an arbitrary initial condition $u_0\in L^\infty_+(\mathbb{R}^N)\cap \mathcal{X}$ and let $v$ be the corresponding solution to~\eqref{CP2}. A direct consequence of Proposition~\ref{prop.omega} and \textbf{Step~3} is that $\omega(u_0)=\{V_*\}$ and the same classical compactness argument used in \textbf{Step~1} then implies that~\eqref{conv4} holds true for $v$ as well. Furthermore, by~\eqref{fg01},
\begin{equation}
	v(s,x) \le \left( \frac{e^{(m-1)s}}{e^{(m-1)s} - 1} \right)^{1/(m-1)} V_*^{1/m}(x), \quad (s,x)\in (0,\infty)\times \mathbb{R}^N. \label{fg03}
\end{equation}
We then infer from~\eqref{num.ineq}, \eqref{fg03} and H\"older's inequality that, for $s>0$ and $R\ge 1$,
\begin{align*}
	& \big\|v(s)-V_*^{1/m}\big\|_{m+1}^{m+1} \le \|v^m(s)-V_*\|_{(m+1)/m}^{(m+1)/m} \\
	& \quad \le \int_{B(0,R)} |v^m(s)-V_*|^{(m+1)/m}\ dx \\
	& \quad\quad + \int_{\mathbb{R}^N\setminus B(0,R)} |v^m(s)+V_*|^{(m+1)/m}\ dx \\
	& \quad\le  |B(0,R)|^{(m-1)/(2m)} \left( \int_{B(0,R)} |v^m(s)-V_*|^{2}\ dx \right)^{(m+1)/(2m)} \\
	& \quad\quad + 2^{1/m} \int_{\mathbb{R}^N\setminus B(0,R)} \left( v^{m+1}(s) + V_*^{(m+1)/m}\right)\ dx \\
	& \quad\le \big(\varpi_N R^N \big)^{(m-1)/(2m)} \big\|v^m(s)-V_*\big\|_2^{(m+1)/m} \\
	& \quad\quad + 2^{1/m} \left( 1 + \left[ \frac{e^{(m-1)s}}{e^{(m-1)s}-1} \right]^{(m+1)/(m-1)} \right)\int_{\mathbb{R}^N\setminus B(0,R)} V_*^{(m+1)/m}\ dx.
\end{align*}
According to~\eqref{conv4}, we may take the limit $s\to\infty$ in the above inequality and find
\begin{equation*}
	\limsup_{s\to\infty} \big\|v(s)-V_*^{1/m}\big\|_{m+1}^{m+1} \le 2^{(m+1)/m} \int_{\mathbb{R}^N\setminus B(0,R)} V_*^{(m+1)/m}\ dx.
\end{equation*}
Recalling that $V_*\in L^{(m+1)/m}(\mathbb{R}^N)$, we may take the limit $R\to\infty$ in the above inequality and end up with
\begin{equation}
	\lim_{s\to\infty} \big\|v(s)-V_*^{1/m}\big\|_{m+1}^{m+1} = 0. \label{fg04}
\end{equation}
Combining~\eqref{fg04} and the boundedness~\eqref{ubv} of $v$ in $L^\infty((\ln{m}/(m-1),\infty)\times\mathbb{R}^N)$ gives the convergence~\eqref{asympt.conv2} (with $\varphi:=V_*^{1/m}$) for all $r\in [m+1,\infty)$. 

\medskip

\noindent\textbf{Step~5.} We are left with the convergence in $L^\infty(\mathbb{R}^N)$. Let $\{s_j\}_{j\ge 1}$ be a sequence of positive real numbers such thart $s_j\to\infty$ as $j\to\infty$. Without loss of generality, we may assume that $s_j\ge 1+\ln{m}/(m-1)$ for $j\ge 1$. Setting $v_j(s) := v(s+s_j)$ for $s\in [-1,1]$ and $j\ge 1$, we argue as in the proof of Proposition~\ref{prop.omega} with the help of~\eqref{ubv} to show that
\begin{equation}\label{fg05}
	\lim_{j\to\infty} \sup_{s\in [-1,1]} \big\|v_j(s) - V_*^{1/m}\big\|_{2m} = 0,
\end{equation}
taking into account that $\omega(u_0)=\{V_*^{1/m}\}$ as shown in \textbf{Step~4}. Next, let $\varepsilon\in (0,1)$ and observe that, owing to~\eqref{eq2} and~\eqref{ubv}, $v_j$ is a weak solution to
\begin{equation*}
	\partial_s v_j - \Delta\Phi(v_j) = S_j, \quad (s,x)\in (-1,1)\times B(0,2/\varepsilon),
\end{equation*}
with 
\begin{equation*}
	\Phi(z) := \max\left\{ z , 2 C_2(u_0) \right\}^m, \quad z\ge 0, 
\end{equation*} 
and
\begin{equation*}
	S_j(s,x) := -|x|^\sigma v_j^m(s,x) + v_j(s,x), \quad (s,x)\in (-1,1)\times B(0,2/\varepsilon).
\end{equation*}
Since
\begin{equation*}
	|S_j(s,x)| \le \left( \frac{2}{\varepsilon} \right)^\sigma C_2(u_0)^m + C_2(u_0), \quad (s,x)\in (-1,1)\times B(0,2/\varepsilon),
\end{equation*}
the assumptions~($A_1$)-($A_6$) in \cite{PV93} are satisfied and we infer from \cite[Theorem~1.1]{PV93} that there are $\alpha_\varepsilon\in (0,1)$ and $\gamma_\varepsilon>1$ such that
\begin{equation*}
	|v_j(s,x) - v_j(s_0,x_0)| \le \gamma_\varepsilon \left( |x-x_0|^{\alpha_\varepsilon} + |s-s_0|^{\alpha_\varepsilon/2} \right) 
\end{equation*}
for $(s,s_0,x,x_0) \in [-1+\varepsilon,1-\varepsilon]^2\times \bar{B}(0,1/\varepsilon)^2$. Consequently,
\begin{equation*}
	\{v_j\}_{j\ge 1} \;\text{ is bounded in }\; C^{\alpha_\varepsilon/2,\alpha_\varepsilon}\big([-1+\varepsilon,1-\varepsilon]\times \bar{B}(0,1/\varepsilon)\big),
\end{equation*}
and we deduce from the Arzel\`a-Ascoli theorem and the convergence~\eqref{fg05} by a diagonal process that there is a subsequence $\{v_{j_k}\}_{k\ge 1}$ of $\{v_j\}_{j\ge 1}$ such that 
\begin{equation}
	v_{j_k} \longrightarrow V_*^{1/m} \;\;\text{ in }\;\; C\big([-1/2,1/2]\times \bar{B}(0,R)\big) \label{fg06}
\end{equation}
for any $R>1$. 

We now take advantage of the upper bound~\eqref{fg01}, along with the tail control~\eqref{tc}, to complete the proof. Indeed, pick $R>1$, $k\ge 1$ and $x\in\mathbb{R}^N$. Then, either $x\in B(0,R)$ and
\begin{align*}
	\big|v(s_{j_k},x) - V_*^{1/m}(x)\big| & = \big|v_{j_k}(0,x) - V_*^{1/m}(x)\big| \\
	& \le \left\| v_{j_k} - V_*^{1/m} \right\|_{C\big([-1/2,1/2]\times \bar{B}(0,R)\big)}.
\end{align*}
Or, $x\in\mathbb{R}^N\setminus B(0,R)$ and, according to~\eqref{fg01} with $s_0=s_{j_k}$ and Proposition~\ref{prop.exrad}, 
\begin{align*}
	\big|v(s_{j_k},x) - V_*^{1/m}(x)\big| & \le \big|v(s_{j_k},x)\big| + \big|V_*^{1/m}(x)\big| \\
	& \le V_*^{1/m}(x) + \left( \frac{e^{(m-1)s_{j_k}}}{e^{(m-1)s_{j_k} - 1}} \right)^{1/(m-1)} V_*^{1/m}(s,x) \\
	& \le C \left[ 1 + \left( \frac{e^{(m-1)s_{j_k}}}{e^{(m-1)s_{j_k} - 1}} \right)^{1/(m-1)} \right] R^{-\sigma/(m-1)}.
\end{align*}
Therefore,
\begin{align*}
	\big\| v(s_{j_k}) -V_*^{1/m}\big\|_\infty & \le  \left\| v_{j_k} - V_*^{1/m} \right\|_{C\big([-1/2,1/2]\times \bar{B}(0,R)\big)} \\
	& \quad + C \left[ 1 + \left( \frac{e^{(m-1)s_{j_k}}}{e^{(m-1)s_{j_k} - 1}} \right)^{1/(m-1)} \right] R^{-\sigma/(m-1)},
\end{align*}
and we may take the limit $k\to\infty$ in view of~\eqref{fg06} to obtain
\begin{equation*}
	\limsup_{k\to\infty} \big\| v(s_{j_k}) -V_*^{1/m}\big\|_\infty \le 2C R^{-\sigma/(m-1)}.
\end{equation*}
Letting $R\to\infty$ gives
\begin{equation*}
	\lim_{k\to\infty} \big\| v(s_{j_k}) -V_*^{1/m}\big\|_\infty = 0.
\end{equation*}
We have thus proved that, given an arbitrary sequence $\{s_j\}_{j\ge 1}$ satisfying $s_j\to\infty$ as $j\to\infty$, there is a subsequence $\{s_{j_k}\}_{k\ge 1}$ with $s_{j_k}\to\infty$ as $k\to\infty$ such that $\{v(s_{j_k})\}_{k\ge 1}$ converges in $L^\infty(\mathbb{R}^N)$ and its limit is $V_*^{1/m}$. Consequently, $\{v(s_j)\}_{j\ge 1}$ is relatively compact in $L^\infty(\mathbb{R}^N)$ with a unique cluster point $V_*^{1/m}$, whence $\{v(s_j)\}_{j\ge 1}$ converges to $V_*^{1/m}$ in $L^\infty(\mathbb{R}^N)$, and the proof of~\eqref{asympt.conv2} is complete, recalling that $\varphi=V_*^{1/m}$.
\end{proof}

\section{Non-existence of stationary solutions for $\sigma\leq\sigma_0$}\label{sec.nonex}

This section is devoted to showing the optimality of the range of $\sigma$ for the existence of a stationary solution to~\eqref{eq2} in $\mathcal{X}_+$. We will actually show that Eq.~\eqref{EE} has no solution in $\mathcal{Y}_+$ and, to this end, establish a Pohozaev identity following analogous steps as in \cite[Section~4]{IL25}.

\begin{proof}[Proof of Proposition~\ref{prop.ne}]
Assume that $\sigma\in [0,\sigma_0]$ and consider a solution
\begin{equation*}
	V\in \dot{H}^{1}(\mathbb{R}^N)\cap L^2_{\sigma}(\mathbb{R}^N)\cap L_{+}^{(m+1)/m}(\mathbb{R}^N)
\end{equation*}
to~\eqref{EE}. We then multiply~\eqref{EE} by $x\cdot\nabla V$ and integrate by parts. By proceeding exactly as in \cite[Section~4]{IL25}, we derive the following identities
\begin{equation*}
\begin{split}
	&-\int_{\mathbb{R}^N}(x\cdot\nabla V)\Delta V\,dx==-\frac{N-2}{2}\|\nabla V\|_2^2,\\
	&\int_{\mathbb{R}^N}(x\cdot\nabla V)|x|^{\sigma}V\,dx=-\frac{N+\sigma}{2}\mathcal{N}_{\sigma}^2(V),\\
	&\int_{\mathbb{R}^N}(x\cdot\nabla V)V^{1/m}\,dx=-\frac{mN}{m+1}\int_{\mathbb{R}^N}V^{(m+1)/m}\,dx.
\end{split}
\end{equation*}
Gathering the previous identities, we infer from Eq.~\eqref{EE} that
\begin{equation}\label{poh1}
	-\frac{N-2}{2}\|\nabla V\|_{2}^2 - \frac{N+\sigma}{2} N_{\sigma}^2(V)=-\frac{mN}{m+1}\|V\|_{(m+1)/m}^{(m+1)/m}.
\end{equation}
Multiplying Eq.~\eqref{EE} by $V$ and integrating over $\mathbb{R}^N$, we find
\begin{equation}\label{poh2}
	\|\nabla V\|_{2}^2+N_{\sigma}^2(V)=\|V\|_{(m+1)/m}^{(m+1)/m}.
\end{equation}
We next multiply~\eqref{poh2} by $(N+\sigma)/2$ and add the resulting equality to~\eqref{poh1} in order to eliminate the terms containing $N_{\sigma}^2(V)$. We thus obtain
\begin{equation}\label{poh3}
	\frac{\sigma+2}{2}\|\nabla V\|_2^{2}+\frac{N(m-1)-\sigma(m+1)}{2(m+1)}\|V\|_{(m+1)/m}^{(m+1)/m}=0.
\end{equation}
Since $\sigma\in[0,\sigma_0]$, we observe that the coefficient of the first term in~\eqref{poh3} is strictly positive, while the coefficient of the second term in~\eqref{poh3} is non-negative. This readily implies that $\|\nabla V\|_{2}=0$, hence $V$ is constant for a.a. $x\in\mathbb{R}^N$. We then deduce from the integrability properties of $V$ that $V\equiv0$. Letting $\varphi:=V^{1/m}$ completes the proof.
\end{proof}

\section*{Acknowledgements}
This work is partially supported by the Spanish project PID2024-160967NB-I00. Part of this work was done while PhL enjoyed the hospitality of Universidad Rey Juan Carlos.

For the purpose of Open Access, a CC-BY license has been applied by the authors to all versions of this article leading up to the Author Accepted Manuscript.


\section*{Data availability}

Our manuscript has no associated data.

\section*{Conflict of interest}

The authors declare that there is no conflict of interest.

\bibliographystyle{plain}


\end{document}